\newtheorem{thm}{Theorem}[section]
\newtheorem{cor}[thm]{Corollary}
\newtheorem{lem}[thm]{Lemma}
\newtheorem{prop}[thm]{Proposition}
\theoremstyle{definition}
\newtheorem{defn}[thm]{Definition}
\newtheorem*{claa}{Claim}
\theoremstyle{remark}
\newtheorem{rem}[thm]{Remark}
\newtheorem{ex}[thm]{Example}
\numberwithin{equation}{section}
\newcommand{\Ac}{\mathcal{A}}
\newcommand{\Bc}{\mathcal{B}}
\newcommand{\Hc}{\mathcal{H}}
\newcommand{\Wc}{\mathcal{W}}
\newcommand{\F}{\mathbf{F}}
\newcommand{\Z}{\mathbf{Z}}
\newcommand{\N}{\mathbf{N}}
\newcommand{\R}{\mathbf{R}}
\newcommand{\Q}{\mathbf{Q}}
\newcommand{\Supp}{\textnormal{Supp}}
\newcommand{\SL}{\textnormal{SL}}
\newcommand{\A}{\mathcal{A}}
\begin{document}

\subjclass[2000]{20F69 
(Primary); 20E22, 
43A05, 
43A65 
(Secondary)}
\keywords{Wreath product, measured walls, Haagerup Property, coarse embedding, Kazhdan's Property T}
\title[Proper actions of wreath products]{Proper actions of wreath products and generalizations}
\thanks{Y.C. and Y.S. have been supported by ANR project ``QuantiT'' (Nr JC08\textunderscore 318197).}

\author{Yves Cornulier}
\address{IRMAR \\ Campus de Beaulieu \\
35042 Rennes Cedex, France}
\email{yves.decornulier@univ-rennes1.fr}

\author{Yves Stalder}
\address{Clermont Universit\'e, Universit\'e Blaise Pascal, Laboratoire de  
Math\'ematiques, BP 10448, F-63000 Clermont-Ferrand, France\newline
CNRS, UMR 6620, Laboratoire de Math\'ematiques, F-63177 Aubi\`ere, France}
\email{yves.stalder@math.univ-bpclermont.fr}

\author{Alain Valette}
\address{Institut de Math\'ematiques, Universit\'e de Neuch\^atel, Rue \'Emile Argand
11, CP 158, 2009 Neuch\^atel, Switzerland}
\email{Alain.Valette@unine.ch}

\date{August 31, 2010}

\begin{abstract} We study stability properties of the Haagerup property and of coarse embeddability in a Hilbert space, under certain semidirect
products. In particular, we prove that they are stable under taking
standard wreath products. Our construction also provides a characterization of subsets with relative Property T in a standard wreath product.
\end{abstract}

\maketitle

\section{Introduction}

A countable group is \emph{Haagerup} if it admits a metrically proper isometric action on a Hilbert space. Groups with the Haagerup Property are also known, after Gromov, as a-T-menable groups as they generalize amenable groups. However, they include a wide variety of non-amenable groups: for example, free groups are a-T-menable; more generally, so are groups having a proper isometric action either on a CAT(0) cubical complex, e.g. any Coxeter group, or on a real or complex hyperbolic symmetric space, or on a product of several such spaces; this includes the Baumslag-Solitar group $\textnormal{BS}(p,q)$, which acts properly by isometries on the product of a tree and a real hyperbolic plane.

A nice feature about Haagerup groups is that they satisfy the strongest form of the Baum-Connes conjecture, namely the conjecture with coefficients \cite{HK}.

The Haagerup Property appears as an obstruction to Kazhdan's Property T and its weakenings like the relative Property T. Namely, a countable group $G$ has Kazhdan's Property T if every isometric action on a Hilbert space has bounded orbits; more generally, if $G$ is a countable group and $X$ a subset, the pair $(G,X)$ has the relative Property T if for every isometric action of $G$ on a Hilbert space, the ``$X$-orbit" of $0$, $\{g\cdot 0|g\in X\}$ is bounded. Clearly, if $X$ is infinite, then $G$ does not have the Haagerup Property.

The class of countable groups with Haagerup Property is obviously closed under taking subgroups. However, unlike the class of amenable groups, it is not closed under taking quotients, nor extensions, even semidirect products, as Kazhdan proved that the pair $(\Z^2\rtimes\SL_2(\Z),\Z^2)$ has the relative Property T.

However, for a semidirect product $\Gamma=W\rtimes G$ of groups with the Haagerup Property, it is reasonable to expect $\Gamma$ to have the Haagerup Property in the case when $G$ acts ``very freely'' on $W$. For instance, consider two Haagerup groups $G,\,H$, and let $W=\operatornamewithlimits{\mbox{\huge $\ast$}}_{g\in G} H$ be the free product of copies of $H$ indexed by $G$, with $G$ acting by shifting indices. Then $W\rtimes G$ is isomorphic to the free product $H\ast G$, and therefore it is Haagerup.

If instead of taking the free product of copies of $H$ indexed by $G$, we take the direct sum $H^{(G)}=\bigoplus_{g\in G}H$ and form the semidirect product $H^{(G)}\rtimes G$, where $G$ acts by shifting the copies of $H$ in the direct sum, we get the standard wreath product $H\wr G$ (often referred to as wreath product) of $H$ and $G$.

Our first main result, which answers Question 62.3 in \cite{Guido} is:

\begin{thm}
Let $G,H$ be countable groups. If $G$ and $H$ have the Haagerup Property, then so does the standard wreath product $H\wr G$.
\label{main}
\end{thm}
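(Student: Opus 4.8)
The plan is to reduce the Haagerup Property of $\Gamma:=H\wr G=H^{(G)}\rtimes G$ to the construction of a proper conditionally negative definite function on $\Gamma$ (equivalently, a $\Gamma$-invariant proper measured walls structure), which is the standard characterization. Fix metrically proper affine isometric actions of $G$ and of $H$ on Hilbert spaces, with $1$-cocycles $b_G,b_H$ and associated conditionally negative definite functions $\psi_G,\psi_H$; after adding to each the cocycle $h\mapsto\delta_h-\delta_e$ of the regular representation one may assume $\psi_G(g)\ge 1$ for $g\ne e_G$ and $\psi_H(h)\ge 1$ for $h\ne e_H$, keeping both proper. Write an element of $\Gamma$ as a pair $(f,g)$ with $f\colon G\to H$ finitely supported and $g\in G$ the ``cursor'', so that $(f_0,g_0)(f_1,g_1)=(f_0\cdot{}^{g_0}\!f_1,\,g_0g_1)$.

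The point to appreciate first is that the obvious candidate
\[
\Psi(f,g)\;:=\;\psi_G(g)\;+\;\sum_{x\in G}\psi_H(f(x))
\]
is conditionally negative definite on $\Gamma$ --- it equals $\|b(f,g)\|^2$ for the affine action on $\bigl(\bigoplus_{x\in G}\mathcal K_H\bigr)\oplus\mathcal K_G$ in which $H^{(G)}$ acts coordinatewise, $G$ permutes the coordinates via left translation on the index set $G$, and the cocycle is $\bigl((b_H(f(x)))_{x},\,b_G(g)\bigr)$ --- but it is \emph{not proper}: a single lamp placed far from the cursor, such as $(f_x,e_G)$ with $f_x$ supported at $x$ with a fixed value, has $\Psi$ independent of $x$, so its sublevel sets are infinite. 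In short, $\Psi$ is blind to the positions of $\Supp(f)$. The fix must make the contribution of a lamp at position $x$ grow, roughly like $d_G(g,x)$, with its distance from the cursor; but this weight cannot be inserted into the coordinatewise construction, since a non-constant weight on the free $G$-set $G$ is incompatible with the permutation action of $G$. It has to be manufactured $\Gamma$-equivariantly, from the measured walls (half-spaces) of $G$ themselves.

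So the construction to carry out is a $\Gamma$-invariant measured walls structure on $\Gamma$ with three families of walls. Family (a): the pull-back along $\Gamma\to G$ of the measured walls structure of $G$, contributing $\psi_G(g)$ to the wall pseudometric from the identity to $(f,g)$. Family (b): ``lamp walls'' indexed by $G\times\{\text{half-spaces of }H\}$, with measure the product of counting measure on $G$ and the walls measure $\mu_H$ of $H$, the half-space attached to $(x,\eta)$ being $\{(f,g):f(x)\in\eta\}$; this contributes $\sum_{x}\psi_H(f(x))$, which, since $\psi_H\ge 1$ off $e_H$, bounds $\#\Supp(f)$ and confines every lamp value. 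Family (c), the crucial one: walls indexed essentially by pairs $(\mathfrak h,\eta)$, $\mathfrak h$ a half-space of $G$ and $\eta$ a half-space of $H$, carrying $\mu_G\otimes\mu_H$, with the half-space attached to $(\mathfrak h,\eta)$ defined so that a lamp at position $x$ crosses a set of $(\mathfrak h,\cdot)$-walls of total measure $\approx d_G(g,x)$, the family being $\Gamma$-invariant because $\Gamma$ acts on half-spaces of $G$ through $\Gamma\to G$ and on half-spaces of $H$ by translation by the relevant lamp value. For $G=\Z$ this specializes to the Diestel--Leader / horocyclic-product description of lamplighter groups. Granting such a structure, properness is routine: $d\bigl((0,e_G),(f,g)\bigr)\le R$ forces $g$ into a finite set (from (a)), $\#\Supp(f)\le R$ and each $f(x)$ into a finite set (from (b)), and every element of $\Supp(f)$ into a finite set (from (c)); so only finitely many $(f,g)$ qualify and $\Gamma$ is Haagerup.

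The main obstacle is precisely family (c): one must meet three demands at once --- $\Gamma$-invariance of the wall family and of its measure, finiteness of the wall pseudometric (so that it yields a conditionally negative definite function at all), and properness (so that lamps far from the cursor really are expensive) --- whereas a naive assignment of half-spaces typically fails one of them, either finiteness (some lamp crossing infinitely many walls) or properness (collapsing back to the unweighted sum). I also expect to spend some care on $\sigma$-finiteness and measurability of the wall spaces and on convergence of the relevant sums and integrals, which rests on the finiteness of $\Supp(f)$ and on cocycles vanishing at the identity.
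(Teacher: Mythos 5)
Your families (a) and (b) are fine and correspond to ingredients the paper also uses (the pullback of walls along $H\wr G\to G$, and the conditionally negative definite function $u(f)=\sum_{x}\psi_H(f(x))$, which in the paper is kept as a c.n.d.\ function rather than walls -- see Theorem~\ref{second} and Proposition~\ref{propercom}), and your observation that the naive $\Psi$ is c.n.d.\ but not proper is correct. The genuine gap is exactly the step you flag yourself: family (c) is never constructed. You only list the properties it should have ($\Gamma$-invariance, finiteness of the separation measure, properness in the lamp positions) and acknowledge that naive assignments fail one of them; but this family is the entire mathematical content of the theorem, so as it stands the proposal is a reduction of the problem to itself. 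Moreover, the specific indexing you propose -- pairs $(\mathfrak h,\eta)$ of a half-space of $G$ and a half-space of $H$, carrying the product measure $\mu_G\otimes\mu_H$, with a single half-space of $\Gamma$ attached to each pair -- is not shown to admit any equivariant assignment with the required properties, and there is reason to doubt it does in this form: in the known wall structures on lamplighters, one half-space $\mathfrak h$ of $G$ does not lift to one half-space of $H\wr G$ but to infinitely many (one for each finitely supported lamp configuration on $\mathfrak h^c$), and no half-space of $H$ enters the positional walls at all.

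For comparison, the paper resolves precisely this point by the gauge-lifting construction of Section~\ref{sec:construction} (Theorem~\ref{cons}) with the support gauge $\phi(w,w')=\Supp(w^{-1}w')$: each half-space $A\subset G$ (from a $G$-invariant measured walls structure $(G,\mu)$ with $d_\mu$ proper, which exists by Proposition~\ref{equiv} since $G$ is Haagerup) is turned into the $\{0,1\}$-pseudodistance $d_A$ on $H^{(G)}\times G$, where $wx$ and $w'x'$ are separated iff $A$ cuts $\Supp(w^{-1}w')\cup\{x,x'\}$; the associated partition measures $\nu_A$ are then integrated against $\mu$ to produce an $(H\wr G)$-invariant measured walls structure $\tilde\mu$ with
$d_{\tilde\mu}(1,wg)=\mu\{A\mid A\vdash\Supp(w)\cup\{1,g\}\}$.
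Finiteness holds because supports are finite, invariance because the gauge is $W$-invariant and $G$-equivariant, and $d_{\tilde\mu}$-bounded sets force both $g$ and every point of $\Supp(w)$ into a $d_\mu$-ball (Proposition~\ref{properness}); adding your family (b) in the form of the c.n.d.\ function $u$ then gives a proper c.n.d.\ function on $H\wr G$. Note also that it is not necessary (and not what the paper achieves) that a lamp at $x$ cost about $d_G(g,x)$: charging it $d_\mu(1,x)$, i.e.\ its distance to the basepoint, already suffices for properness. Until you supply an explicit, invariant construction playing the role of (c) -- or invoke one -- the proof is incomplete at its crucial step.
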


As obviously $H$ is Haagerup if and only if $H^{(G)}$ is, this appears as a stability under a special kind of extensions. This theorem was announced in \cite{CSV} and proved there in special cases, e.g. $E\wr \F$, where $E$ is finite and $\F$ is free. That special result appeared since then in the book~\cite{BrO}. We also refer to \cite{CSV} for applications of the result in harmonic analysis. We here mention another application. It was asked in \cite[7.7(1)]{Cth} whether the quotient of a Haagerup group by an amenable normal subgroup is always Haagerup; the answer is negative in a strong sense.

\begin{cor}
There exists a Haagerup group $\Gamma_1$ with a non-Haagerup quotient $\Gamma_2=\Gamma_1/N$ with $N$ normal abelian in $\Gamma_1$; namely $\Gamma_1=\Z\wr\SL_2(\Z)$ and $\Gamma_2=\Z^2\rtimes\SL_2(\Z)$.
\end{cor}
\begin{proof}
If $\Lambda=\SL_2(\Z)$ and $\Z\Lambda$ is its group ring, the standard wreath
product $\Z\wr\SL_2(\Z)$ can be identified with $\Z\Lambda \rtimes \Lambda$,
and $\Z^2$ is a cyclic $\Lambda$-module, so is a quotient of the free cyclic
$\Lambda$-module by some submodule $N$, and $\Z^2\rtimes\Lambda$ is the 
quotient of $\Z\Lambda \rtimes \Lambda$ by the abelian normal subgroup $N$.
\end{proof}

The definition of the Haagerup Property, in terms of affine isometric actions on Hilbert spaces, provides many examples, but is not very tractable to give stability results. Permanence of the class of countable Haagerup groups either under direct limits or under extensions with amenable quotients \cite[Example 6.1.6]{CCJJV}, relies on the characterization of the Haagerup Property in terms of unitary representations (see the beginning of Section \ref{sec:wr}).

The proof of Theorem \ref{main} is based on the characterization due to \cite{RS,CMV} of the Haagerup Property by actions on spaces with measured walls, see Section \ref{sec:walls}. These actions are more closely related to isometric actions on $L^1$ than on $L^2$. The fact that wreath products behave better for actions on $L^1$-spaces than Hilbert spaces is illustrated by the wreath product $\Z\wr\Z$: on the one hand it has an isometric action on an $L^1$-space whose orbital maps are quasi-isometric embeddings (as follows easily from the construction in Section \ref{sec:construction}); on the other hand it does not embed quasi-isometrically into a Hilbert space as follows from the existence of a quasi-isometrically embedded 3-regular tree in $\Z\wr\Z$ (which follows e.g. from \cite{CT}) and the non-existence of a quasi-isometric embedding of such a tree into a Hilbert space \cite{B}. Better constraints on embeddings of $\Z\wr\Z$ into a Hilbert space were brought out by \cite{AGS,ANP}.

Our second theorem, which follows from the same construction as the first one, concerns relative Property T. In the statement below, we identify the underlying set of the group $H\wr G$ with the cartesian product $H^{(G)}\times G$.

\begin{thm}\label{wreathT}
Let $H,G$ be countable groups and $X$ a subset of $H\wr G$. The following statements are equivalent:
\begin{itemize}\item[(i)]$(H\wr G,X)$ has relative Property T;\item[(ii)]there exist subsets $Y\subset H$ and $Z\subset G$ with $X\subset Y^{(Z)}\times Z$, the pairs $(H,Y)$ and $(G,Z)$ have relative Property T, and the following additional condition is satisfied: the function $X\rightarrow\N: (w,g)\mapsto \# \Supp(w)$ is bounded (where $\Supp(w)$ denotes the support of $w$).
\end{itemize}
\end{thm}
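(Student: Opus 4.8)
The plan is to prove the two implications separately: $(ii)\Rightarrow(i)$ will use only elementary stability properties of relative Property T, whereas $(i)\Rightarrow(ii)$ is where the construction of Section~\ref{sec:construction} does the real work. For $(ii)\Rightarrow(i)$, first I would record that, for a group $\Gamma$, the class of subsets $A$ with $(\Gamma,A)$ of relative Property T is stable under passing to subsets, under finite unions, under $A\mapsto A^{-1}$ and under finite products (if $x=ab$ then $\|x\cdot 0\|\le\|b\cdot 0\|+\|a\cdot 0\|$, as $a$ acts by an isometry), and that relative Property T of $(\Lambda,A)$ with $\Lambda\le\Gamma$ implies it for $(\Gamma,A)$. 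Writing $H_k\le H\wr G$ for the $k$-th copy of $H$, $Y_k\subseteq H_k$ for the image of $Y$, and $\tilde Z=\{(0,g):g\in Z\}$, one has $Y_k=(0,k)\,Y_e\,(0,k)^{-1}$, hence $\bigcup_{k\in Z}Y_k\subseteq\tilde Z\,Y_e\,\tilde Z^{-1}$; so from relative Property T of $(H,Y)$ and $(G,Z)$ one obtains it for the subsets $Y_e$, $\tilde Z$, $\tilde Z^{-1}$ of $H\wr G$, and thus for $\bigcup_{k\in Z}Y_k$. Finally, any $(w,g)\in X$ with $\Supp(w)=\{k_1,\dots,k_m\}$ satisfies $m\le N$ (by the boundedness of $\#\Supp$ on $X$), $k_i\in Z$, $g\in Z$ and $w(k_i)\in Y$, so it factors as $\delta_{k_1}^{w(k_1)}\cdots\delta_{k_m}^{w(k_m)}\,(0,g)$, a product of at most $N+1$ elements of $\bigl(\bigcup_{k\in Z}Y_k\bigr)\cup\tilde Z$, where $\delta_k^h$ denotes the element with $h$ in coordinate $k$ and $e$ elsewhere. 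Hence $X$ lies in a finite product of subsets with relative Property T, and $(i)$ follows; this is the one point where the bound on $\#\Supp$ is used.

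For $(i)\Rightarrow(ii)$ I would use that $(\Gamma,A)$ has relative Property T iff every conditionally negative definite function on $\Gamma$ is bounded on $A$, and exhibit enough such functions on $H\wr G$. $(1)$ For $\psi_G$ conditionally negative definite on $G$, the pull-back $\psi_G\circ\pi$ along the projection $\pi\colon H\wr G\to G$ is conditionally negative definite, hence bounded on $X$, so $(G,\pi(X))$ has relative Property T. $(2)$ For $\psi_H$ conditionally negative definite on $H$, the natural diagonal affine isometric action of $H\wr G$ on a Hilbert space of the form $\bigoplus_{k\in G}\mathcal K$ ($G$ permuting the summands, the $k$-th copy of $H$ acting on the $k$-th summand via a fixed affine action with cocycle giving $\psi_H$) yields the conditionally negative definite function $(w,g)\mapsto\sum_{k\in\Supp(w)}\psi_H(w(k))$; with $\psi_H=1-\delta_e$ this is $\#\Supp(w)$, so $\#\Supp$ is bounded on $X$, and letting $\psi_H$ vary shows $(H,Y)$ has relative Property T for $Y:=\{w(k):(w,g)\in X,\ k\in G\}\cup\{e\}$. $(3)$ Applying the construction of Section~\ref{sec:construction} to $\psi_G$ on $G$ (and arbitrary data on $H$) produces a conditionally negative definite function $\Psi$ on $H\wr G$ with $\Psi(w,g)\ge c\,\psi_G(k)$ for some constant $c>0$ whenever $k\in\Supp(w)$; hence $\psi_G$ is bounded on $\bigcup_{(w,g)\in X}\Supp(w)$. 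Setting $Z:=\pi(X)\cup\bigcup_{(w,g)\in X}\Supp(w)$, we conclude that $(G,Z)$ has relative Property T, and by construction $X\subseteq Y^{(Z)}\times Z$.

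The main obstacle is step $(3)$: producing a conditionally negative definite (or, equivalently, measured-walls) function on $H\wr G$ that charges an element $(w,g)$ according to the \emph{positions} of its lit lamps, not merely their values. This is the same feature of the construction that is responsible for properness in Theorem~\ref{main}, and it is the only place where Section~\ref{sec:construction} is genuinely needed; the pull-back along $\pi$, the boundedness of $\#\Supp$ via $1-\delta_e$, and the control of $Y$ via a diagonal action are comparatively routine.
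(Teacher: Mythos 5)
Your proposal is correct and follows essentially the same route as the paper: the paper's sufficiency argument uses the identical decomposition $wg=g_1h_1g_1^{-1}\cdots g_kh_kg_k^{-1}\cdot g$ (bounding a measure-definite function by subadditivity, which is exactly your product/conjugation/union stability argument), and its necessity argument likewise projects to $G$, uses the summed structure $\hat\sigma$ of Lemma \ref{aux} (your diagonal action, the discrete walls structure on $H$ playing the role of $1-\delta_e$), and uses the lifted structure $\tilde\mu$ of Theorem \ref{cons} for the support condition (your step (3)). The only cosmetic adjustment is that in step (3) one applies the lift to a $G$-invariant measured walls structure realizing $\psi_G^{1/2}$ (via Proposition \ref{equiv}), so the lower bound on $\Psi(w,g)$ is by $\psi_G(k)^{1/2}$ rather than $c\,\psi_G(k)$, which changes nothing for boundedness.
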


This additional condition shows in particular that whenever $G$ is infinite
and $H\neq \{1\}$, the wreath product $H\wr G$ does not have Property T, a
fact proved in \cite{CMV,Neu}. Also note that Condition (ii) can also be
rephrased as: there are relative Property~T subsets $Y\subset H=H^{\{1\}}$
and $Z\subset G$ such that $X\subset (YZ)^n$ for some $n\ge 1$.

Our construction also provides a coarse analog of Theorem \ref{main}.

\begin{thm}\label{coarse}
Let $G,H$ be countable groups. If $G$ and $H$ admit a coarse embedding into a
Hilbert space, then so does the standard wreath product $H\wr G$.
\end{thm}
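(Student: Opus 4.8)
The plan is to feed the construction of Section~\ref{sec:construction} --- the same one behind Theorem~\ref{main} --- with measured walls structures that witness coarse, rather than proper, behaviour of $H$ and $G$. The relevant characterization is: a countable group $\Gamma$, equipped with the word metric $|\cdot|$ for a finite generating set (or more generally any proper left-invariant metric), coarsely embeds into a Hilbert space if and only if it coarsely embeds into $L^1$, if and only if it admits a measured walls structure whose wall pseudo-metric $d_{\mathcal W}$ is squeezed between the word metric and proper moduli, i.e.\
\[
\rho_-\big(|x^{-1}y|\big)\le d_{\mathcal W}(x,y)\le\rho_+\big(|x^{-1}y|\big)
\]
for non-decreasing $\rho_\pm\colon\R_+\to\R_+$ with $\rho_-(t)\to\infty$. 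Indeed $L^2$ embeds isometrically into $L^1$ (Gaussian random variables) and, conversely, the square root of an $L^1$-pseudo-metric is an $L^2$-pseudo-metric, so coarse embeddability into $L^1$ and into a Hilbert space coincide; and $L^1$-pseudo-metrics are exactly the wall pseudo-metrics, by the cut-cone decomposition. Two remarks: first, unlike in the Haagerup case these wall structures need not, and in general cannot, be $\Gamma$-equivariant, since a $\Gamma$-invariant proper conditionally negative definite function would force the Haagerup Property; second --- and this is why something new has to be proved --- an invariant conditionally negative definite function is automatically bounded on bounded sets (it is, up to a square root, a pseudo-metric), so in the proof of Theorem~\ref{main} only properness of the wall pseudo-metric had to be checked, whereas for non-invariant wall structures boundedness on bounded sets becomes a genuine extra requirement.

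So choose measured walls structures $\mathcal W_H$ on $H$ and $\mathcal W_G$ on $G$ with wall pseudo-metrics $d_H,d_G$ squeezed as above between the word metrics and proper moduli $\rho^H_\pm,\rho^G_\pm$, and let $\mathcal W$ be the measured walls structure on $H\wr G$ produced from $\mathcal W_H$ and $\mathcal W_G$ by the construction of Section~\ref{sec:construction}. From the proof of Theorem~\ref{main} we already know that the wall pseudo-metric $d_{\mathcal W}$ is proper when $d_H$ and $d_G$ are; a rerun of that argument with the estimates kept uniform yields the quantitative lower bound $d_{\mathcal W}(x,y)\ge\rho_-(|x^{-1}y|)$ for an explicit $\rho_-\to\infty$ built from $\rho^H_-,\rho^G_-$. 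What remains is the upper bound $d_{\mathcal W}(x,y)\le\rho_+(|x^{-1}y|)$ for some non-decreasing $\rho_+$. Granting it, $\sqrt{d_{\mathcal W}}$ is a coarse embedding of $H\wr G$ into $L^1$, hence into a Hilbert space, and the theorem follows.

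The upper bound is obtained by comparing, for $(f,g),(f',g')\in H\wr G$, the total measure of the walls of $\mathcal W$ separating them with the word length of $(f,g)^{-1}(f',g')$, for which one uses its ``traveling-salesman'' description: up to multiplicative constants, $|(f,g)^{-1}(f',g')|$ equals $\sum_x|f(x)^{-1}f'(x)|_H$ plus the length of a shortest path in the Cayley graph of $G$ visiting $g$, $g'$, and every position at which $f$ and $f'$ differ. The separating walls of $\mathcal W$ decompose accordingly: those pulled back from $\mathcal W_G$ have total measure $d_G(g,g')$, bounded by a fixed proper function of that word length; the ``fibre'' walls sitting over the differing positions have total measure a controlled multiple of $\sum_x d_H(f(x),f'(x))\le\sum_x\rho^H_+\big(|f(x)^{-1}f'(x)|_H\big)$; and the remaining walls of the construction, which record how far apart the relevant positions lie inside $G$, must be shown to have total measure bounded by a function of the traveling-salesman cost --- equivalently, to carry little measure when those positions form a tight cluster in $G$. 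I expect this last point to be the main obstacle: it is precisely the dual, from above, of the estimate that is the technical core of Theorem~\ref{main} --- the argument that these ``spread-detecting'' walls, while of finite total measure, still dominate the cost of touring the support --- something the naive product structure (base walls of $G$ together with a counting-weighted copy of $\mathcal W_H$ over each position) cannot achieve, since it registers only the endpoint $g$.
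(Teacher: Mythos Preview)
Your plan is the paper's: run Section~\ref{sec:construction} on a non-invariant measured walls structure for $G$, add the fibre contribution $\hat\sigma$ from $H$, and check that the resulting kernel on $H\wr G$ is both uniform (your $\rho_+$) and effectively proper (your $\rho_-$). The paper packages this as Theorem~\ref{coarsegen}, with the coarse-embedding characterization recorded as Proposition~\ref{kernelHilbert}.

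You have the difficulty backwards, though. The upper bound you flag as the ``main obstacle'' is the trivial direction and is already in the toolkit as the uniform half of Theorem~\ref{cons}(ii), proved in Lemma~\ref{tilduf}: a wall cutting $F=\phi(w_1,w_2)\cup\{x_1,x_2\}$ must separate some pair of points of $F$, so
\[
d_{\tilde\mu}(w_1x_1,w_2x_2)=\mu\{A:A\vdash F\}\ \le\ \sum_{y,z\in F}d_\mu(y,z),
\]
and when $(w_1x_1)^{-1}(w_2x_2)$ ranges over a fixed finite subset of $H\wr G$, the set $F$ has bounded cardinality and bounded $G$-word-diameter, so the right-hand side is bounded via $\rho^G_+$. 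Combined with the $W$-invariance of the gauge this gives $(W\rtimes G)$-uniformity of $d_{\tilde\mu}$ at once. Nor is there a hard lower bound lurking as a ``dual'': the properness in Proposition~\ref{properness} is the equally trivial reverse inclusion $\{A:A\vdash\{v,x_2\}\}\subset\{A:A\vdash F\}$ for each $v\in F$, and the proof of Theorem~\ref{main} never compares $d_{\tilde\mu}$ to a touring cost --- that quantitative comparison appears only in Section~\ref{sct:comp}, for free $G$. The one piece of genuinely new content in the coarse case is upgrading ``proper'' to ``effectively proper'' (Proposition~\ref{propercontrolled} and the verification of hypothesis~(H) in Theorem~\ref{coarsegen}), which, as you correctly anticipate, is a routine uniform rerun.
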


Actually, Dadarlat and Guentner \cite{DG} proved that coarse embeddability into 
Hilbert spaces is preserved by extensions with exact quotient, which yields the
above theorem when $G$ is exact.

Since we made an earlier version of this text available, S.~Li \cite{Li} 
gave
alternative proofs (albeit based on similar ideas) of our Theorems 
\ref{main} and \ref{coarse} and 
proved a new
result: if $G$ and $H$ have positive (non-equivariant) Hilbert space
compression, then so does the wreath product $H\wr G$.

The above results concern standard wreath products. More generally, if $X$ is a
$G$-set, the permutational wreath product $H\wr_{X}G:=H^{(X)}\rtimes G$ can be
defined; the results of Section \ref{sec:other} apply only to some of these
permutational wreath products. For example, we prove

\begin{thm}
Let $G,H$ be countable groups, and $N$ a normal subgroup of $G$. Suppose that
$G$, $H$, and $G/N$ all have the Haagerup Property. Then so does the
permutational wreath product $H\wr_{G/N} G$.
\end{thm}

The non-trivial part of the converse, namely that if $H\wr_{G/N} G$ is Haagerup
and $H\neq\{1\}$, then $G/N$ is Haagerup, was proved by Chifan and Ioana
\cite{CI}, disproving a conjecture made in a preliminary version of this paper.

The outline of the paper is as follows. In Section \ref{sec:walls} we define our first main tool, namely measured wall structures, inspired from \cite{RS} and from the spaces with measured walls in \cite{CMV}.

In Section \ref{sec:gauges}, we introduce our second main tool, gauges, a generalization of the support function on $H^{(G)}$ (taking values in finite subsets of $G$), and give several examples. In Section \ref{sec:construction},
gauges are used to transfer invariant measured walls structures from $G$ to $W\rtimes G$. In Section \ref{sec:other} this allows us to prove the Haagerup property or coarse embeddability for several cases of semidirect products; in particular Theorems \ref{main} and \ref{coarse} are proved.

We take a closer look at wreath products in Section \ref{sec:wr}, where Theorem \ref{wreathT} is proved. Section \ref{sct:comp} deals with the $L^1$-compression of wreath products with free groups. We prove:

\begin{thm}\label{L1compr} Let $G$ be a finitely generated group and let $\F$ be a finitely generated free group. Then the equivariant $L^1$-compression of $G\wr \F$ is equal to the one of $G$.
\end{thm}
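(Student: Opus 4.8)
The plan is to establish the two inequalities $\alpha_1^\#(G\wr\F)\le\alpha_1^\#(G)$ and $\alpha_1^\#(G\wr\F)\ge\alpha_1^\#(G)$ separately, where $\alpha_1^\#$ denotes the equivariant $L^1$-compression and all groups carry their word metrics; write $e$ for the identity of $\F$ and, for $(w,f)\in G\wr\F$, identify $w$ with a finitely supported function $\F\to G$ as in the discussion preceding Theorem~\ref{wreathT}. The upper bound is the easy half. Fix a generating set of $G\wr\F$ containing generating sets of $\F$ and of the copy of $G$ formed by the configurations supported at $e$ with trivial $\F$-component; a short argument shows this copy of $G$ is \emph{isometrically} embedded (a geodesic word realizing a configuration whose only lit lamp sits at $e$ gains nothing by moving the cursor). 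Hence, for any affine isometric action of $G\wr\F$ on an $L^1$-space with orbit map $b$ satisfying $\|b(\gamma)\|_1\ge c\,|\gamma|^{\alpha}$, restricting the action to this copy of $G$ gives an affine isometric $G$-action whose orbit map is $b|_G$ and still satisfies $\|b(g)\|_1\ge c\,|g|_G^{\alpha}$; taking the supremum over actions yields $\alpha_1^\#(G)\ge\alpha_1^\#(G\wr\F)$.

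For the reverse inequality I would use the measured walls technology of Sections~\ref{sec:walls}--\ref{sec:construction} through the following ($L^1$-analogue of the) characterization recalled in Section~\ref{sec:walls}: for a finitely generated group $\Gamma$, $\alpha_1^\#(\Gamma)$ equals the supremum of those $\alpha\ge0$ for which there is a $\Gamma$-invariant measured walls structure whose wall pseudometric $d_{\Wc}$ satisfies $d_{\Wc}\le C\,d_\Gamma$ and $d_{\Wc}(\gamma,1)\ge c\,|\gamma|^{\alpha}$ --- indeed, under the correspondence of \cite{RS,CMV}, $d_{\Wc}$ coincides up to a universal constant with the $L^1$-pseudometric $(\gamma,\gamma')\mapsto\|b(\gamma)-b(\gamma')\|_1$ of the associated affine isometric action, by the layer-cake formula. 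Fix $\alpha'<\alpha_1^\#(G)$, equip $G$ with an invariant measured walls structure realizing the exponent $\alpha'$, and equip $\F$ with the invariant measured walls structure given by the cuts of its Cayley tree (with respect to a free generating set), for which $d_{\Wc}=d_\F$, i.e. the exponent is $1$. Feeding these into the construction of Section~\ref{sec:construction} for $G\wr\F=G^{(\F)}\rtimes\F$ with gauge the support map $\Supp\colon G^{(\F)}\to\mathcal{P}_{\mathrm{fin}}(\F)$ produces an invariant measured walls structure on $G\wr\F$ (equivalently, an explicit affine isometric $L^1$-action assembled from three blocks: a copy of $G$'s $L^1$-space at each vertex of $\F$ for the lamp values, the isometric embedding of the Cayley tree of $\F$ into $\ell^1$ of its edge set for the cursor position, and one block per edge of the tree for the travelling-salesman cost). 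The bound $d_{\Wc}\le C\,d_{G\wr\F}$ is clear block by block, so the point is the lower bound on $d_{\Wc}$.

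For this one uses the standard estimate $d_{G\wr\F}\big((w,f),(1,e)\big)\asymp\sum_{x\in\F}|w(x)|_G+\ell\big(\Supp(w)\cup\{e,f\}\big)$, where $\ell(S)$ is the number of edges of the smallest subtree of the Cayley tree of $\F$ containing the finite set $S$; the second summand is, up to a factor $2$, the length of the cursor's optimal tour lighting every required lamp, a travelling-salesman quantity which in a \emph{tree} is exactly a count of separating edges. The walls tracking the lamp value at each vertex $x$ contribute at least $\sum_x d_{\Wc_G}\big(w(x),1\big)\ge c\sum_x|w(x)|_G^{\alpha'}\ge c\big(\sum_x|w(x)|_G\big)^{\alpha'}$ to $d_{\Wc}$, while the walls coming from the edges of the Cayley tree recover the travelling-salesman term \emph{linearly}, contributing $\gtrsim\ell\big(\Supp(w)\cup\{e,f\}\big)$. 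As these two families of walls are disjoint, $d_{\Wc}\big((w,f),(1,e)\big)$ dominates a constant times the sum of these contributions, and since $(a+b)^{\alpha'}\asymp a^{\alpha'}+b^{\alpha'}$ for $0\le\alpha'\le1$ this gives $d_{\Wc}\big((w,f),(1,e)\big)\gtrsim\big(\sum_x|w(x)|_G\big)^{\alpha'}+\ell\big(\Supp(w)\cup\{e,f\}\big)^{\alpha'}\asymp d_{G\wr\F}\big((w,f),(1,e)\big)^{\alpha'}$. Letting $\alpha'\uparrow\alpha_1^\#(G)$ yields $\alpha_1^\#(G\wr\F)\ge\alpha_1^\#(G)$, which combined with the upper bound gives the theorem.

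The main obstacle is the content of the previous paragraph: the metric decomposition (routine for wreath products, but requiring the travelling-salesman description of words in the base group) and, above all, verifying that the measured walls structure output by the construction of Section~\ref{sec:construction} recovers the travelling-salesman term \emph{with no loss in the exponent}. It is precisely here that freeness of $\F$ enters --- equivalently, that the Cayley graph of $\F$ with respect to a free basis is a tree, so that sizes of spanning subtrees are genuine $\ell^1$-type (Hamming) quantities --- and the argument would not go through for a general base group.
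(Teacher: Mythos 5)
Your argument is correct and takes essentially the same route as the paper: the walls characterization of equivariant $L^1$-compression via Proposition \ref{equiv} ((iii)$\Leftrightarrow$(iv)), the lifted structure $\tilde\mu$ from Section \ref{sec:construction} with the support gauge together with the direct sum $\hat\sigma$ of Lemma \ref{aux}, the tree walls on $\F$, Parry's word-length formula, the convex-hull/travelling-salesman comparison in the tree, and concavity/subadditivity of $r\mapsto r^{\alpha'}$ to preserve the exponent. The only differences are cosmetic: the paper proves the lower bound for an arbitrary non-decreasing subadditive compression function (Proposition \ref{fctComp}) rather than for powers, and leaves the easy inequality (restriction to the isometrically embedded copy of $G$) implicit.
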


Finally, Appendix \ref{HSMeasure} shows that, on countable sets, the concept of measured walls structure introduced in Section \ref{sec:walls} below, is equivalent to the concept of space with measured walls from \cite{CMV}. Appendix \ref{AppHecke} links invariant proper kernels to Hecke pairs.
\medskip

{\bf Acknowledgments:} Thanks are due to Goulnara Arzhantseva, Bachir Bekka, 
Pierre-Emmanuel Caprace and Fr\'ed\'eric Haglund for their input at various 
stages of this work. We also thank the referee for his careful reading and useful corrections.


\section{Measured walls structures}\label{sec:walls}

Let $X$ be a set. Let $2^X$ be the power set of $X$, endowed with the product topology. For $x\in X$, denote $\A_x=\{A\subset X|A\ni x\}$: this is a clopen subset of $2^X$. Set $P'(X)=2^X-\{\emptyset,X\}$, a locally compact space in the relative topology.

\begin{defn}\label{MesWalStr}
A {\it measured walls structure} is a pair $(X,\mu)$ where $X$ is a set and $\mu$ is a Borel measure on $2^X$ such that for all $x,y\in X$
$$d_\mu(x,y):=\mu(\A_x\bigtriangleup\A_y)<\infty.$$
\end{defn}

It is then straightforward that $d_\mu$ is a pseudo-distance on $X$.

\begin{ex}\label{ex:trees}
Let $X$ be the vertex set of a tree $T$. Define a half-space as one class of the partition of $X$ into two classes, obtained by removing some edge from the tree. For a subset $B$ of $2^X$, define $\mu(B)$ as half the number of half-spaces contained in $B$. Then $(X,\mu)$ is a measured walls structure, and $d_\mu(x,y)$ is exactly the tree distance between the vertices $x$ and $y$.
\end{ex}

\begin{ex} \label{ex:discretewalls} More generally, let $(X,\Wc, f)$ be a discrete space with walls, meaning that $X$ is a set, $\Wc$ is a set of partitions into 2 classes (called walls), and $f:\Wc\rightarrow\N\cup\{\infty\}$ is a function such that, for every $x,y\in X$:
$$w(x,y):=\sum_{m\in\Wc(x|y)}f(m)<+\infty$$
where $\Wc(x|y)$ is the set of walls separating $x$ from $y$. Define a half-space as one class of some wall in $\Wc$; denote by $\Hc$ the set of half-spaces, and let $p:\Hc\to\Wc$ be the canonical map. For $B\subset 2^X$, define
$$\mu(B)=\frac{1}{2}\sum_{A\in B\cap\Hc} f\circ p(A).$$
Then $(X,\mu)$ is a measured walls structure, and $d_\mu(x,y)=w(x,y)$ for $x,y\in X$.

\end{ex}

\begin{lem}\label{Radon} For every measured walls structure $\mu$ on a countable set $X$, the restriction of $\mu$ to $P'(X)$ is a Radon measure.
\end{lem}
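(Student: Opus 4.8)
The plan is to show that the restriction of $\mu$ to $P'(X) = 2^X \setminus \{\emptyset, X\}$ is locally finite; since $P'(X)$ is a locally compact, second countable (hence $\sigma$-compact) Hausdorff space when $X$ is countable, a locally finite Borel measure on it is automatically Radon (inner regular on all Borel sets, finite on compacts), so local finiteness is the only thing to establish. The key point is that $P'(X)$ is covered by the sets of the form $\A_x \cap \A_y^c = \A_x \setminus \A_y$ (sets containing $x$ but not $y$) over pairs $x \neq y$ in $X$: indeed, any $A \in P'(X)$ is neither empty nor all of $X$, so there exist $x \in A$ and $y \notin A$, whence $A \in \A_x \setminus \A_y$. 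When $X$ is countable this is a countable cover of $P'(X)$ by clopen (hence, I would check, relatively compact) subsets.

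First I would verify that each $\A_x \setminus \A_y$ has finite $\mu$-measure: since $\A_x \setminus \A_y \subset \A_x \bigtriangleup \A_y$, we get $\mu(\A_x \setminus \A_y) \leq \mu(\A_x \bigtriangleup \A_y) = d_\mu(x,y) < \infty$ by Definition~\ref{MesWalStr}. Next I would check that $\A_x \setminus \A_y$ is relatively compact in $P'(X)$: it is a closed subset of $2^X$ (being $\A_x \cap (2^X \setminus \A_y)$, an intersection of clopen sets), $2^X$ is compact by Tychonoff, and $\A_x \setminus \A_y$ is disjoint from $\{\emptyset, X\}$, so it is a compact subset of $P'(X)$. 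Thus every point of $P'(X)$ has a (relatively compact, open) neighbourhood of finite measure, i.e. $\mu|_{P'(X)}$ is locally finite; moreover $P'(X)$ is itself $\sigma$-compact, being the countable union of these compacts.

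Finally I would invoke the standard fact that a locally finite Borel measure on a locally compact, $\sigma$-compact, Hausdorff space with a countable basis is a Radon measure (finite on compact sets, outer regular on Borel sets, inner regular on open sets, hence on all $\sigma$-finite Borel sets); here countability of $X$ guarantees $2^X$, and hence $P'(X)$, is metrizable and second countable, so this applies. The only mild subtlety — and the one place I would be slightly careful — is the role of countability: if $X$ were uncountable, $P'(X)$ need not be $\sigma$-compact and the cover above would be uncountable, so the argument genuinely uses the hypothesis. Beyond that, every step is a routine topological verification and I anticipate no real obstacle.
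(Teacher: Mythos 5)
Your proof is correct and follows essentially the same route as the paper: both hinge on the compact clopen sets $\A_x\setminus\A_y$ having measure at most $d_\mu(x,y)<\infty$, and on the countability of $X$ making $P'(X)$ $\sigma$-compact (second countable), so that the standard regularity theorem (Rudin, Theorem 2.18) applies. The only cosmetic difference is that you verify local finiteness directly from the cover by these sets, while the paper observes that every compact subset lies in a finite union of the prebasic sets and hence has finite measure.
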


\begin{proof} Let $(X,\mu)$ be a measured walls structure. The locally compact space $P'(X)$ has a prebasis consisting of compact sets $(\mathcal{A}_x\setminus\mathcal{A}_y)_{x,y\in X}$, which have finite measure for $\mu$. Every compact subset $K$ of $P'(X)$ is contained in a finite union of subsets in the prebasis, so $\mu(K)<+\infty$. Since $X$ is countable, every open subset of $P'(X)$ is $\sigma$-compact, so every Borel measure on $P'(X)$ which is finite on compact subsets is a Radon measure (see Theorem 2.18 in \cite{Rudin}).
\end{proof}

Let $X,Y$ be sets; if $f:X\to Y$ is a map and $(Y,\mu)$ a measured walls structure, we can push forward the measure $\mu$ by the inverse image map $f^{-1}:2^Y\to 2^X$ to get a measured walls structure $(X,f^*\mu)$, whose distance is obviously $d_{f^*\mu}(x,x')=f^*d_\mu(x,x')=d_\mu(f(x),f(x'))$. When $f$ is one-to-one and is viewed as an inclusion, $f^*\mu$ can be viewed as the restriction of the measured walls structure $(Y,\mu)$ to $X$.

Let $(X_i,\mu_i)$ be a family of measured walls structures. Fix a base point in each $X_i$. Let $p_i$ denote the natural projection $\bigoplus X_j\to X_i$. Then the measure $\mu=\sum_ip_i^*\mu_i$ defines a measured walls structure whose associated pseudo-distance is $d_\mu((x_i),(y_i))=\sum_id_{\mu_i}(x_i,y_i)$. We call $(\bigoplus X_i,\mu)$ the {\it direct sum} of the measured walls structures $(X_i,\mu_i)$.

\begin{defn}
1) A {\it measure definite kernel} on $X$ is a function $\kappa:X\times X\to\R_+$ such that there exists a measured space $(Y,\mathcal{T},m)$ and a map $F:X\to \mathcal{T}$ with $\kappa(x,y)=m(F(x)\bigtriangleup F(y))$ for all $x,y\in X$.

2) For $p\in[1,\infty)$, an $L^p$-{\it embeddable kernel} on $X$ is a function $\kappa:X\times X\to\R_+$ such that there exists a measured space $(Y,\mathcal{T},m)$ and a map $f:X\to L^p(Y,m)$ with $\kappa(x,y)=\|f(x)-f(y)\|_p$ for all $x,y\in X$.
\end{defn}

\begin{prop}
Let $X$ be a countable set and let $\kappa:X\times X\to\R_+$ be a kernel. The following are equivalent.
\begin{itemize}
\item[(i)] $\kappa$ is measure definite;
\item[(ii)] $\kappa$ is an $L^1$-embeddable kernel;
\item[(ii')] $\kappa^{1/p}$ is an $L^p$-embeddable kernel for every $p\in [1,\infty)$.
\item[(iii)] $\kappa=d_\mu$ for some measured walls structure $(X,\mu)$.
\end{itemize}\label{md}
\end{prop}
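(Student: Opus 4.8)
The plan is to establish the cycle of implications $(iii)\Rightarrow(i)\Rightarrow(ii)\Rightarrow(iii)$, together with $(i)\Leftrightarrow(ii')$, since $(ii)$ is just the case $p=1$ of $(ii')$.

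For $(iii)\Rightarrow(i)$, suppose $d_\mu=\kappa$ for a measured walls structure $(X,\mu)$. Take $(Y,\Tc,m)=(2^X,\text{Borel},\mu)$ and define $F:X\to\Tc$ by $F(x)=\A_x$; then by definition $m(F(x)\bigtriangleup F(y))=\mu(\A_x\bigtriangleup\A_y)=d_\mu(x,y)=\kappa(x,y)$, and each $\A_x$ is Borel, so $\kappa$ is measure definite. Conversely, for $(i)\Rightarrow(iii)$ (which is not strictly needed for the cycle, but clarifies the picture): given $(Y,\Tc,m)$ and $F:X\to\Tc$ with $\kappa(x,y)=m(F(x)\bigtriangleup F(y))$, push the measure $m$ forward along the map $Y\to 2^X$ sending $y\mapsto\{x\in X:y\in F(x)\}$; the preimage of $\A_x$ is exactly $F(x)$, so the pushforward measure $\mu$ on $2^X$ satisfies $\mu(\A_x\bigtriangleup\A_y)=m(F(x)\bigtriangleup F(y))=\kappa(x,y)<\infty$, hence $(X,\mu)$ is a measured walls structure with $d_\mu=\kappa$.

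For $(i)\Rightarrow(ii')$, fix $p\in[1,\infty)$ and let $\kappa(x,y)=m(F(x)\bigtriangleup F(y))$ as above. The standard device is the map $f:X\to L^p(Y,m)$, $f(x)=\mathbf{1}_{F(x)}$; then $\|f(x)-f(y)\|_p^p=\int_Y|\mathbf{1}_{F(x)}-\mathbf{1}_{F(y)}|^p\,dm=m(F(x)\bigtriangleup F(y))=\kappa(x,y)$, since $|\mathbf{1}_{F(x)}-\mathbf{1}_{F(y)}|$ is the indicator of the symmetric difference. Thus $\|f(x)-f(y)\|_p=\kappa(x,y)^{1/p}$, showing $\kappa^{1/p}$ is $L^p$-embeddable; the case $p=1$ gives $(ii)$. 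The converse $(ii')\Rightarrow(i)$ (again not needed for the cycle) follows by taking $p=1$: an $L^1$-embedding $f$ of $\kappa$ can be postcomposed, after translating so that $f(x_0)=0$ for a basepoint $x_0$, with the standard isometric embedding of $L^1(Y,m)$ into an $L^1$-space of sets — the Kuratowski-type trick writing a function as the measure of its "region between the graph and zero" — but it is cleaner simply to invoke $(ii)\Rightarrow(iii)$ below and then $(iii)\Rightarrow(i)$.

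The substantive implication is $(ii)\Rightarrow(iii)$: from an $L^1$-embedding $f:X\to L^1(Y,m)$ with $\|f(x)-f(y)\|_1=\kappa(x,y)$, produce a measured walls structure. Here I would translate so $f(x_0)=0$ for a fixed basepoint, then use the classical fact that $\|g-h\|_1=\int_{\R\times Y}|\mathbf{1}_{E_g}-\mathbf{1}_{E_h}|\,d(\lambda\times m)$ where $E_g=\{(t,y):0\le t\le g(y)\text{ or }g(y)\le t\le 0\}$ is the region between the graph of $g$ and the zero function and $\lambda$ is Lebesgue measure; this realizes each $f(x)$ as a measurable subset $F(x)=E_{f(x)}$ of the measured space $(\R\times Y,\lambda\times m)$ with $\kappa(x,y)=(\lambda\times m)(F(x)\bigtriangleup F(y))$, i.e. $\kappa$ is measure definite, and then $(i)\Rightarrow(iii)$ from the previous paragraph finishes. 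The main obstacle is this measure-theoretic step: one must check that the sets $E_{f(x)}$ are genuinely $(\lambda\times m)$-measurable (using measurability of the representatives $f(x)$) and that the symmetric-difference computation is valid when $f(x)$ and $f(y)$ have the same or opposite signs pointwise — this requires the pointwise identity $|\mathbf{1}_{[0,a]}\bigtriangleup\mathbf{1}_{[0,b]}|$-length $=|a-b|$ for reals $a,b$ (with the convention $[0,a]$ meaning $[a,0]$ when $a<0$), which is a short case analysis. Countability of $X$ is used only to guarantee, via Lemma~\ref{Radon}, that the resulting measure restricts to a Radon measure on $P'(X)$, so that $(X,\mu)$ is a bona fide measured walls structure in the sense relevant to the rest of the paper; it is not needed for the algebraic equivalences themselves.
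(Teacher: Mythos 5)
Your argument is correct in substance, but it follows a genuinely different route from the paper on the two substantive implications. For (ii)$\Rightarrow$(i) the paper argues through the cone structure: measure definite kernels form a convex cone closed under pointwise limits \cite[Proposition~1.3]{RS}, the cone of $L^1$-embeddable kernels is generated by cut metrics \cite{DL}, and cut metrics are trivially measure definite. You instead embed $L^1(Y,m)$ isometrically into the symmetric-difference pseudometric on subsets of $\R\times Y$ (with the measure $\lambda\otimes m$) via the ``region between the graph and zero''; this is more self-contained, avoiding both the cut-cone decomposition and the pointwise-limit closure argument, and the needed symmetric differences automatically have finite measure equal to $\|f(x)-f(y)\|_1=\kappa(x,y)$. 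For (i)$\Rightarrow$(iii), where the paper simply cites \cite[Proposition~1.2]{RS}, your pushforward of $m$ under $y\mapsto\{x\in X\,:\,y\in F(x)\}$ is a correct direct proof (and essentially the cited argument).

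Two points need repair. First, in (i)$\Rightarrow$(ii') the map $f(x)=\mathbf{1}_{F(x)}$ need not lie in $L^p(Y,m)$, since $F(x)$ may have infinite measure (only the symmetric differences are assumed finite); you must set $f(x)=\mathbf{1}_{F(x)}-\mathbf{1}_{F(x_0)}$ for a basepoint $x_0$, as the paper does and as you yourself do elsewhere --- the computation is unchanged because $f(x)-f(y)=\mathbf{1}_{F(x)}-\mathbf{1}_{F(y)}$. Second, your closing remark about countability is off: the definition of a measured walls structure requires only a Borel measure on $2^X$, not a Radon one, so Lemma \ref{Radon} is not where countability matters here. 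It matters in your own pushforward step: for countable $X$ the Borel $\sigma$-algebra of $2^X$ is generated by the clopen sets $\A_x$, so verifying that the preimage of each $\A_x$ is $F(x)\in\mathcal{T}$ suffices for Borel measurability of the pushforward map, and hence the pushforward measure is defined on all Borel sets. For uncountable $X$ the $\sigma$-algebra generated by the $\A_x$ is strictly smaller than the Borel $\sigma$-algebra (its members depend on countably many coordinates), and your construction only produces a measure there; this is precisely why the paper singles out (i)$\Rightarrow$(iii) as the one implication using countability and leaves open whether the hypothesis is necessary, contrary to your assertion that the equivalences need no countability.
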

\begin{proof}The less trivial implication (i)$\Rightarrow$(iii) is \cite[Proposition~1.2]{RS}; it is the only one for which we need the countability assumption (however we do not know if it is really needed). The implication (iii)$\Rightarrow$(i) is trivial: just map $x$ to $\A_x$.

The classical implication (ii)$\Rightarrow$(i) works as follows: observe that the class of measure definite kernels is a convex cone, closed under pointwise convergence \cite[Proposition~1.3]{RS}. As the cone of $L^1$-kernels is generated by cut metrics, i.e. pull-backs of the $\{0,1\}$-valued metric on two elements (see Section 4.2 in \cite{DL}), it is enough to check that cut metrics are measure definite, which is straightforward. The implication (ii')$\Rightarrow$(ii) is trivial, and we get (i)$\Rightarrow$(ii') by choosing any base-point $x_0\in X$, defining $f(x)=\mathbf{1}_{F(x)}-\mathbf{1}_{F(x_0)}\in L^p(Y,m)$, and observing that $\kappa(x,y)^{1/p}$ is the $L^p$-norm of $\mathbf{1}_{F(x)}-\mathbf{1}_{F(y)}=f(x)-f(y)$.
\end{proof}

\begin{prop}If $\kappa$ is an $L^2$-embeddable kernel, then it is an $L^1$-embeddable kernel.\label{L2L1}
\end{prop}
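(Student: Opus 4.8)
The plan is to reduce everything to the classical fact that a Hilbert space embeds linearly and isometrically, up to a fixed multiplicative constant, into an $L^1$-space, via Gaussian random variables. Suppose $\kappa$ is $L^2$-embeddable, say $\kappa(x,y)=\|f(x)-f(y)\|_2$ for some map $f:X\to\HH$, where $\HH=L^2(Y,m)$, which we regard as a real Hilbert space. Choose an orthonormal basis $(e_i)_{i\in I}$ of the closed linear span of $f(X)$ in $\HH$. Let $(\Omega,\mathbb{P})=\prod_{i\in I}(\R,\gamma)$ be the product of copies of the real line equipped with the standard Gaussian measure $\gamma$, and let $\xi_i:\Omega\to\R$ be the coordinate functions, an independent family of standard Gaussian variables. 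For $u=\sum_i u_ie_i\in\HH$, the series $Tu:=\sum_i u_i\xi_i$ converges in $L^2(\Omega,\mathbb{P})$ (it is an orthogonal series with $\sum_i u_i^2=\|u\|^2<\infty$), and $Tu$ is a centred real Gaussian variable whose variance equals $\|u\|^2$. Consequently, for every $p\in[1,\infty)$ one has $\|Tu\|_p=c_p\,\|u\|$, where $c_p:=\|\xi_1\|_{L^p(\R,\gamma)}$ depends only on $p$; in particular $c_1=\sqrt{2/\pi}>0$.

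Now set $g:=c_1^{-1}\,(T\circ f):X\to L^1(\Omega,\mathbb{P})$. Since $T$ is linear, for all $x,y\in X$ we get
\[
\|g(x)-g(y)\|_1=c_1^{-1}\,\|T(f(x)-f(y))\|_1=c_1^{-1}c_1\,\|f(x)-f(y)\|_2=\kappa(x,y),
\]
so $\kappa$ is an $L^1$-embeddable kernel, as desired.

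The only step requiring genuine care is the construction of the map $T$ together with its defining properties: that the series $\sum_i u_i\xi_i$ converges, that its sum is Gaussian with variance $\|u\|^2$, and hence that $\|Tu\|_p$ is proportional to $\|u\|$. This is the standard isonormal Gaussian process associated with $\HH$ and is entirely classical; when the span of $f(X)$ is non-separable one either restricts to that span as above (so $I$ may be taken of the relevant cardinality) or invokes Kolmogorov's extension theorem over the index set $I$, the argument being otherwise unchanged. One could alternatively, when $X$ is countable, appeal to Proposition \ref{md} and merely verify that $\|f(x)-f(y)\|_2$ is a measure definite kernel; but the Gaussian construction above yields this — and the general case, with no countability hypothesis — directly, so I would present it as the main line of proof.
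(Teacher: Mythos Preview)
Your proof is correct and is precisely the classical Gaussian-variable argument that the paper itself invokes (the paper does not spell out a proof but refers to \cite{BDCK} for this very construction, and to \cite{RS} for an alternative via measured walls). You have filled in the details of the first of these two references, so there is no divergence in approach.
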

This just means that $L^2$ embeds isometrically into $L^1$. For the classical proof using gaussian random variables, see \cite{BDCK}; a proof using spaces with measured walls can be found in \cite[Proposition 1.4]{RS}.

\medskip

Let $G$ be a group; a kernel $\kappa$ on a $G$-set $X$ is uniform if for all $h_1,h_2
\in X$, the map $g\mapsto \kappa(gh_1,gh_2)$ is bounded on $G$.

On the other hand, for $X$ a $G$-set, we say that the measured walls structure
$(X,\mu)$ is {\it uniform} if the distance $d_\mu$ is a uniform kernel.
It is left-invariant if the measure $\mu$ is invariant for the left action of $G$ on $2^X$ (given by $g\cdot\mathbf{1}_Y=\mathbf{1}_{gY}$). Of course, left-invariant implies uniform.

\begin{prop}
Let $G$ be a group, $X$ a countable $G$-set, and let $\kappa$ be a left-$G$-invariant kernel on $X$.
Consider the following properties.

\begin{itemize}
\item[(i)] $\kappa$ is measure definite;
\item[(ii)] $(X,\kappa)$ is $G$-equivariantly embeddable into a metric space $Y$ with an isometric $G$-action, $Y$ being isometrically embeddable into some $L^1$-space;
\item[(ii')] for every $p\in [1,\infty)$, $(X,\kappa^{1/p})$ is $G$-equivariantly embeddable into a metric space $Y$ with an isometric $G$-action, $Y$ being isometrically embeddable into some $L^p$-space;
\item[(iii)] $\kappa=d_\mu$ for some $G$-invariant measured walls structure $(X,\mu)$;
\item[(iv)] $(X,\kappa)$ is $G$-equivariantly embeddable into some $L^1$-space with an isometric $G$-action;
\item[(iv')] for every $p\in [1,\infty)$, $(X,\kappa^{1/p})$ is $G$-equivariantly embeddable into some $L^p$-space with an isometric $G$-action.
\end{itemize}
Then$$\text{(iii)}\Leftrightarrow \text{(iv')}\Leftrightarrow\text{(iv)}\Rightarrow\text{(i)}\Leftrightarrow \text{(ii)}\Leftrightarrow\text{(ii')}.$$
Moreover:
\begin{itemize}
\item If $G$ is amenable, then all these properties are equivalent.
\item If $\kappa$ is an $L^2$-embeddable kernel, then it satisfies all the above conditions. This is in particular the case when $\kappa$ is the square root of a measure-definite kernel.
\end{itemize}
\label{equiv}
\end{prop}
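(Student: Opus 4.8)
The plan is to establish the cycle $(iii)\Rightarrow(iv')\Rightarrow(iv)\Rightarrow(iii)$ and then harvest the statements about $(i)$, $(ii)$, $(ii')$ cheaply. Indeed, $(iv)\Rightarrow(ii)$ and $(iv')\Rightarrow(ii')$ are trivial (an $L^p$-space embeds isometrically and $G$-equivariantly into itself), and $(ii')\Rightarrow(ii)$ is the case $p=1$. For $(ii)\Rightarrow(i)$ (resp.\ $(ii')\Rightarrow(i)$) one composes the two embeddings to obtain an $L^1$-embedding of $(X,\kappa)$ (resp.\ an $L^p$-embedding of $(X,\kappa^{1/p})$) and applies Proposition~\ref{md}. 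Finally, for $(i)\Rightarrow(ii)$ and $(i)\Rightarrow(ii')$, the $G$-invariance of $\kappa$ means that $G$ acts by isometries on the metric space underlying $(X,\kappa)$, so one may take $Y=(X,\kappa)$ itself; by Proposition~\ref{md} this $Y$ embeds isometrically in an $L^1$-space, and $(X,\kappa^{1/p})$ embeds isometrically in an $L^p$-space, which gives $(ii)$ and $(ii')$.

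For $(iii)\Rightarrow(iv')$, fix $x_0\in X$ and work in $L^p(2^X,\mu)$ (where $\mu$ is $G$-invariant). Put $f(x)=\mathbf 1_{\A_x}-\mathbf 1_{\A_{x_0}}$; then $\|f(x)-f(y)\|_p^p=\mu(\A_x\bigtriangleup\A_y)=\kappa(x,y)$, so $\|f(x)-f(y)\|_p=\kappa(x,y)^{1/p}$. Since the $G$-action on $2^X$ preserves $\mu$, $G$ acts linearly and isometrically on $L^p(2^X,\mu)$ by $(g\cdot\phi)(A)=\phi(g^{-1}A)$, whence $g\cdot\mathbf 1_{\A_y}=\mathbf 1_{\A_{gy}}$; one checks that $b(g):=\mathbf 1_{\A_{gx_0}}-\mathbf 1_{\A_{x_0}}$ lies in $L^p(2^X,\mu)$ (its $p$-th power norm is $\kappa(gx_0,x_0)$) and is a $1$-cocycle, so $g\cdot_{\mathrm{aff}}\xi:=g\cdot\xi+b(g)$ is an affine isometric action for which $f$ is $G$-equivariant. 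Then $(iv')\Rightarrow(iv)$ is the case $p=1$.

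The substantial implication is $(iv)\Rightarrow(iii)$. Given a $G$-equivariant isometric map $f\colon X\to L^1(\Omega,\nu)$ for an (a priori affine) isometric $G$-action, the layer-cake construction of \cite{RS} yields the measurable map $\Psi\colon\Omega\times\R\to 2^X$, $\Psi(\omega,t)=\{x\in X:f(x)(\omega)>t\}$, whose pushforward $\mu_0$ of $\nu\otimes\mathrm{Leb}$ satisfies $d_{\mu_0}=\kappa$ (Fubini plus $\int|f(x)-f(y)|\,d\nu=\kappa(x,y)$). To upgrade $\mu_0$ to a $G$-invariant measure I would invoke the Banach--Lamperti description of the surjective linear isometries $\pi(g)$ forming the linear part of the action: after the harmless reduction, available since $X$ is countable, to $(\Omega,\nu)$ standard and $\sigma$-finite, one has $\pi(g)\phi=h_g\cdot(\phi\circ\varphi_g)$ with $\varphi_g$ measure-class-preserving and $(\varphi_g)_*(|h_g|\,\nu)=\nu$. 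A direct computation then shows that $g\cdot\Psi(\omega,t)$ equals $\Psi(\rho_g(\omega,t))$ where $h_{g^{-1}}>0$ and its complement $X\setminus\Psi(\rho_g(\omega,t))$ where $h_{g^{-1}}<0$, for the reparametrization $\rho_g(\omega,t)=\bigl(\varphi_{g^{-1}}(\omega),(t-b(g^{-1})(\omega))/h_{g^{-1}}(\omega)\bigr)$, which preserves $\nu\otimes\mathrm{Leb}$. Replacing $\mu_0$ by its symmetrization under $A\mapsto X\setminus A$ (which does not change $d_{\mu_0}$) absorbs the sign ambiguity of $h_g$ and produces a $G$-invariant measured walls structure with distance $\kappa$. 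This step—combining the structure theory of $L^1$-isometries with measure-theoretic bookkeeping (measurability of $\Psi$ and $\rho_g$, the $\sigma$-finite reduction, the sign of $h_g$)—is where I expect the main difficulty to lie.

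Finally, the two additional assertions. If $G$ is amenable it suffices to prove $(i)\Rightarrow(iii)$: by Proposition~\ref{md} and Lemma~\ref{Radon} there is a Radon measure $\mu_0$ on $P'(X)$ with $d_{\mu_0}=\kappa$; for $\varphi\in C_c(P'(X))$ the function $g\mapsto(g_*\mu_0)(\varphi)$ is bounded on $G$ (dominate $|\varphi|$ by a finite nonnegative combination of indicators of compact-open sets $\A_x\setminus\A_y$ and use $(g_*\mu_0)(\A_x\setminus\A_y)=\mu_0(\A_{g^{-1}x}\setminus\A_{g^{-1}y})\le\kappa(x,y)$, which relies on the $G$-invariance of $\kappa$), so applying a left-invariant mean to it defines a positive linear functional on $C_c(P'(X))$, hence by Riesz representation integration against a Radon measure $\mu$; this $\mu$ is $G$-invariant by invariance of the mean and satisfies $\mu(\A_x\bigtriangleup\A_y)=\kappa(x,y)$, giving $(iii)$. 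If $\kappa$ is $L^2$-embeddable, then $\kappa^2$ is a $G$-invariant conditionally negative definite kernel, so the usual GNS-type construction gives a $G$-equivariant isometric embedding of $(X,\kappa)$ into a Hilbert space carrying an affine isometric $G$-action; composing with the Gaussian map $v\mapsto\xi_v$ into $L^1$ of the associated Gaussian space—which is \emph{linear}, hence transports the affine isometric action and the equivariance (up to a harmless scaling of the measure), as in Proposition~\ref{L2L1}—yields $(iv)$, hence all the properties. The last sentence is then immediate, since Proposition~\ref{md} shows that $\sqrt{\kappa'}$ is $L^2$-embeddable whenever $\kappa'$ is measure definite.
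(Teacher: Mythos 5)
Your proposal is correct in outline, and most of it coincides with the paper's own proof: the cocycle construction for (iii)$\Rightarrow$(iv$'$) in $L^p(2^X,\mu)$, the trivial/non-equivariant implications among (i), (ii), (ii$'$), and the amenable case (your averaging of $g\mapsto (g_*\mu_0)(\varphi)$ against an invariant mean is just an explicit version of the paper's fixed-point argument on the weak-* compact convex set of measures $\mu$ with $d_\mu=\kappa$). The genuine divergence is the key implication (iv)$\Rightarrow$(iii). The paper outsources it: an $L^1$-space is a median space, so by \cite[Theorem 5.1]{CDH} it carries an invariant structure of space with measured walls, which is pulled back to $X$ and converted to a measured walls structure via Appendix \ref{HSMeasure}. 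You instead argue by hand, via the Robertson--Steger layer-cake map $\Psi(\omega,t)=\{x: f(x)(\omega)>t\}$, the Banach--Lamperti form $\pi(g)\phi=h_g\cdot(\phi\circ\varphi_g)$ of the linear part, the reparametrization $\rho_g$ of $\Omega\times\R$ (which does preserve $\nu\otimes\mathrm{Leb}$, since $(\varphi_{g^{-1}})_*(|h_{g^{-1}}|\nu)=\nu$), and symmetrization under complementation to absorb the sign of $h_g$; since complementation commutes with the $G$-action on $2^X$ and does not change $d_{\mu_0}$, the symmetrized pushforward is indeed $G$-invariant with distance $\kappa$. This is a legitimate alternative route and buys a self-contained proof avoiding median-space machinery.

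What it costs is the step you call ``harmless''. Reducing to $\sigma$-finite $(\Omega,\nu)$ is fine but needs an argument: the band generated by the countable set $\{f(x)-f(y)\}$ is preserved by each $\pi(g)$ (this set is mapped onto itself and surjective $L^1$-isometries preserve disjointness), so the band projection keeps the equivariance and the distances, and that band is $\sigma$-finite. Standardness, however, is not automatic: the band need not be separable, and the separable closed linear span of your data is not itself an $L^1$-space, so point maps $\varphi_g$ and pointwise-defined $h_g$ are not directly available. To make the computation rigorous you should either use Lamperti's theorem in its measure-algebra (regular set isomorphism) form and redo the bookkeeping at the level of the measure algebra of $\Omega\times\R$ --- which suffices, since $X$ is countable, $2^X$ is standard, and the pushforward $\mu_0$ only needs the Boolean homomorphism $\mathcal{A}_x\mapsto\{(\omega,t): f(x)(\omega)>t\}$ --- or give a genuine point-realization argument. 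Also, for the $L^2$-embeddable case your Gaussian-linearization-plus-(iv)$\Rightarrow$(iii) route works, but it funnels through the hardest implication; the paper instead follows \cite[Theorem~2.1]{RS} and builds the invariant measured walls structure directly from the Gaussian space, where the isometries are measure-preserving with positive weights, so neither Lamperti nor the sign discussion is needed.
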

\begin{rem}
Somewhat surprisingly, the reverse implication $(i)\Rightarrow(iv)$ is not known in general (for instance when $X=G$ is non-abelian free)\footnote{A ``proof'' of the implication $(i)\Rightarrow(iii)$ appears in Proposition 2(2) of \cite{CMV}: this proof is erroneous, as it rests on a misquotation of Theorem 2.1 in \cite{RS}. This error does not affect the main results in \cite{CMV}.}.
\end{rem}

\begin{proof}[Proof of Proposition \ref{equiv}]
The implication $(iii)\Rightarrow(iv')$ was essentially proved in \cite[Proposition~3.1]{CTV}. We recall here the proof, translated in the present language. Let $(X,\mu)$ be a $G$-invariant measured walls structure with $d_\mu=\kappa$. First consider the linear isometric action of $G$ on $L^p(2^X,\mu)$, given by $\pi_p(g)f(A)=f(g^{-1}A)$. Fix a base-point $x_0\in X$. Then $G$ acts affinely isometrically on $L^p(2^X,\mu)$ with linear part $\pi_p$, by $$\alpha_p(g)f=\pi_p(g)f+\mathbf{1}_{\A_{gx_0}}-\mathbf{1}_{\A_{x_0}}.$$
Now define the map $f:X\to L^p(2^X,\mu)$ by $f(x)=\mathbf{1}_{\A_{x}}-\mathbf{1}_{\A_{x_0}}$.

We see that $f(gx)=\mathbf{1}_{\A_{gx}}-\mathbf{1}_{\A_{x_0}}$ and $\pi_p(g)f(x)=\mathbf{1}_{\A_{gx}}-\mathbf{1}_{\A_{gx_0}}$. It follows that $\alpha_p(g)f(x)=f(gx)$, i.e. we get equivariance when $L^p(2^X,\mu)$ is a $G$-set through $\alpha_p$.

Also $$d(f(x),f(y))=\|\mathbf{1}_{\A_x}-\mathbf{1}_{\A_y}\|_p=\kappa(x,y)^{1/p}.$$

The implication $(iv)\Rightarrow(iii)$ follows essentially from \cite{CDH}. 
Namely, assume that $f:X \to Y=L^{1}(Z,m)$ is a $G$-equivariant embedding satisfying $\kappa(x,y) = ||f(x)-f(y)||_1$. Then $Y$ is a median space \cite[Example 2.8(7)]{CDH}. Thus, by \cite[Theorem 5.1]{CDH}, $Y$ can be turned into a space with measured walls $(Y,\Wc,\Bc,\mu)$, in the sense of \cite{CMV}, such that $G$ acts by automorphisms of spaces with walls \cite[Definition 3.5]{CDH}. We then consider the pull-back $(X,\Wc_X,\Bc_X,\mu_X)$ with respect to $f$; by \cite[Lemma 3.9]{CDH}, we have $\kappa=d_{\mu_X}$, and $G$ acts by automorphisms of spaces with walls. As $X$ is countable, this ensures the existence of a measure $\nu$ such that $(X,\nu)$ is a $G$-invariant measured walls structure with $\kappa = d_\nu$; see Appendix \ref{HSMeasure}.

All other implications are either trivial or follow from their non-equivariant counterpart; for $(i)\Rightarrow(ii')$, we set $f(x) = \mathbf 1_{F(x)}-\mathbf 1_{F(x_0)} \in L^p(Y,\mathcal T, m)$ and we
observe as in Proposition \ref{md} that $\kappa^{1/p} = ||f(x) - f(y)||_p$. Then, we define a $G$-action on $f(X)$ by $g\cdot f(x) = f(gx)$. To check that it is well-defined, observe that $f(x)=f(x')$ if and only if $\kappa(x,x') = 0$; hence, since $\kappa$ is $G$-invariant, $f(x)=f(x')$ implies $f(gx)=f(gx')$. Notice that \textit{a priori} $L^p(Y,\mathcal T, m)$ is endowed with no $G$-action, so that this does {\it not} prove (iv').

In case $G$ is amenable, let us now prove the implication $(i)\Rightarrow(iii)$. Suppose that $\kappa$ is measure definite and consider the set $\mathcal{C}$ of measures $\mu$ on $P'(X)$ such that $(X,\mu)$ is a measured wall structure ($\mu$ being extended to $2^X$ with $\mu(\{\emptyset,X\})=0$) with corresponding distance $d_\mu=\kappa$. By Proposition \ref{md}, $\mathcal{C}$ is non-empty. By Lemma \ref{Radon}, we can view each $\mu\in\mathcal{C}$ as a Radon measure on $P'(X)$.
We claim that $\mathcal{C}$ is weak-* compact. It is clearly closed and convex. Moreover, if $u$ is any continuous compactly supported function on $P'(X)$, then $\Supp\,u$ is contained in a finite union of subsets in the natural prebasis $(\mathcal{A}_x\setminus\mathcal{A}_y)_{x,y\in X}$ of the topology of $P'(X)$, showing that $\int_{P'(X)} ud\mu$ is bounded when $\mu$ ranges over $\mathcal{C}$. This shows that $\mathcal{C}$ is compact. Therefore the natural action of the amenable group $G$ has a fixed point $\nu$. Then $(X,\nu)$ is the desired $G$-invariant measured walls structure.

If $\kappa$ is an $L^2$-embeddable kernel (i.e. $\kappa^2$ is conditionally negative definite), arguing as in the proof of \cite[Theorem~2.1]{RS}, we get that $\kappa$ satisfies the strongest condition (iii). Finally, if $\kappa$ is measure definite, then $\kappa^{1/2}$ is $L^2$-embeddable, by the implication $(i)\Rightarrow (ii')$ in Proposition \ref{md}.
\end{proof}


\section{Gauges}\label{sec:gauges}

Let $W,X$ be sets. Let $\A=2^{(X)}$ denote the set of finite subsets of $X$.

\begin{defn}
An {\it $\A$-gauge} on $W$ is a function $\phi:W\times W\to \A$
such that:
$$\phi(w,w')=\phi(w',w)\quad\forall w,w'\in W;$$
$$\phi(w,w'')\subset\phi(w,w')\cup\phi(w',w'')\quad\forall w,w',w''\in W.$$

\end{defn}

\medskip

Observe that if $\phi_1,\phi_2$ are $\A$-gauges, then $\phi_1\cup\phi_2$ is an $\A$-gauge as well.

\begin{ex}
If $W=X$ is any set and $\A=2^{(X)}$, then $\phi(w,w')=\{w,w'\}$ is an $\A$-gauge on $X$.
\end{ex}

When $W$ is a group, $\phi$ is left-invariant if and only if it can be written as $\phi(w,w')=\psi(w^{-1}w')$.
For future reference, we record this as a definition.
\begin{defn} Let $W$ be a group. A {\it $W$-invariant $\A$-gauge} on $W$ is a function $\psi:W\to \A$ such that
$$\psi(w)=\psi(w^{-1})\quad\forall w\in W;$$
$$\psi(ww')\subset\psi(w)\cup\psi(w')\quad\forall w,w'\in W.$$
\label{G-gauge}\end{defn}

\begin{ex}\label{finitary}
If $W=\textnormal{Sym}_0(X)$, the group of finitely supported permutations of $X$, then $\psi(w)=\Supp(w)$ is a $W$-invariant $\A$-gauge on $W
$.
\end{ex}

\begin{ex}
If $F$ is a set with base point 1, $$W=F^{(X)}=\{f:X\to F|f(x)=1\textnormal{ for all but finitely many }x\},$$ then $$\phi(w,w')=\{x|w(x)\neq w'(x)\}$$ is an $\A$-gauge on $W$. If $F$ is a group, then this gauge is left-invariant.\label{gauge_support}
\end{ex}

\begin{ex}\label{regular} Suppose that $W$ is a group generated by the union of a family of subsets $(U_x)_{x\in X}$; we say that this family is {\it regular}
if it satisfies the following axiom: for any subset $Y$ of $X$, denoting by $W_Y$ the subgroup of $W$ generated by $\bigcup_{y\in Y}U_y$, we
assume that $W_Y\cap W_Z=W_{Y\cap Z}$ for all $Y,Z\subset X$. Then for each $w\in W$, we can define its support as the smallest finite subset $\psi(w)=Y$ of $X$ such that $w\in W_Y$. Plainly, the support function satisfies the axioms of a $W$-invariant $\Ac$-gauge on $W$.

Let us give several examples of groups with a regular generating family of subsets.

\begin{itemize}

\item[i)] A direct sum $\bigoplus H_x$, by the family $(H_x)$;

\item[ii)] A free product $\ast H_x$, by the family $(H_x)$;

\item[iii)] A Coxeter group, by the family of its given Coxeter generators (this family is regular by \cite[Chap.~IV.\S1, Theorem~2(ii)]{Bk});

\item[iv)] An Artin group, by the family of its given Artin generators (this family is regular by \cite[Theorem 4.14]{vdL});

\item[v)] Any group $G$ endowed with a Tits system (or BN-pair) $(B,N)$, with set of reflections $S$: the family $(BsB)_{s\in S}$ is regular; indeed, for $X\subset S$, let $P_X$ be the subgroup generated by $\bigcup_{s\in X}BsB$: by 3.2.2 in \cite{Tits}, these parabolic subgroups $P_X$ satisfy $P_{X\cap Y}=P_X\cap P_Y$.

\item[vi)] Let $V$ be a variety of groups, i.e. the class of all groups satisfying a set of laws $(w_i)_{i\in I}$. For $X$ a set, we denote by $V[X]$ the relatively free group on $X$, i.e. the largest quotient of the free group $\F[X]$ belonging to $V$. For $Y\subset X$, the inclusion $\F[Y]\hookrightarrow\F[X]$ induces an inclusion $V[Y]\hookrightarrow V[X]$. As a consequence, with $V_x=V[\{x\}]$, the generating family $(V_x)_{x\in X}$ is regular in $V[X]$. For the Burnside variety defined by the law $w^N$ (where $N\geq 2$), the relatively free group on $X$ is the free Burnside group $B(X,N)$ of exponent $N$. A variety $V$ is said to be locally finite if every finitely generated group in $V$ is finite. It is a well-known fact that the Burnside variety is locally finite for $N=2,3,4,6$.

\item[vii)] More generally, any group $W$ generated by the union of a family of subsets $(U_x)_{x\in X}$ such that for every $Y$ there exists a retraction $p_Y$ of $W$ onto $W_Y$ such that $p_Y|_{W_z}$ is constant for every $z\in X\backslash Y$. One example is the following. For every group $G$, denote by $G^*$ the largest residually finite quotient of $G$, i.e. the quotient of $G$ by the intersection of all finite index subgroups. For every set $X$, set $W=B(X,N)^*$, where $B(X,N)$ is the free Burnside group as above. For $x\in X$, let $U_x$ be the subgroup generated by $x\in X$. This family $(U_x)_{x\in X}$ satisfies the above assumption on retractions, by functoriality of the correspondence $G\to G^*$. Note that $W$ is locally finite, by the solution to the restricted Burnside problem due to Zelmanov \cite{Zel1,Zel2}.
\end{itemize}
\end{ex}

If $A,Y\subset X$, we say that $A$ {\it cuts} $Y$ and we write $A\vdash Y$ if neither $Y\subset A$ nor $Y\subset A^c$, i.e. $A$ induces a non-trivial partition of $Y$.

\begin{lem}
Let $Y,X$ be sets, $A\subset X$, and $\phi$ an $\A$-gauge on $Y$. Assume that $\phi(y,y)$ is a singleton for every $y\in Y$. Set $d_A(y,y')=1$ if $A\vdash\phi(y,y')$ and $0$ otherwise. Then $d_A$ is a pseudodistance on $Y$.\label{lemgap}
\end{lem}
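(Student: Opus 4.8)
The plan is to verify the three defining properties of a pseudodistance directly, exploiting that $d_A$ takes values only in $\{0,1\}$. Reflexivity is immediate: $\phi(y,y)$ is a singleton by hypothesis, and a singleton is never cut by a subset (it lies entirely in $A$ or entirely in $A^c$), so $d_A(y,y)=0$. Symmetry $d_A(y,y')=d_A(y',y)$ follows at once from the symmetry axiom $\phi(y,y')=\phi(y',y)$ of an $\A$-gauge.

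The only substantive point is the triangle inequality $d_A(y,y'')\le d_A(y,y')+d_A(y',y'')$. Since the right-hand side lies in $\{0,1,2\}$, the inequality can only fail if $d_A(y,y'')=1$ while $d_A(y,y')=d_A(y',y'')=0$; so I assume that $A$ cuts neither $\phi(y,y')$ nor $\phi(y',y'')$ and show that $A$ does not cut $\phi(y,y'')$. The key observation is that the singleton $\phi(y',y')$ is a common element of $\phi(y,y')$ and $\phi(y',y'')$: indeed, applying the inclusion axiom to the triple $(y',y,y')$ and using symmetry gives $\phi(y',y')\subset \phi(y',y)\cup\phi(y,y')=\phi(y,y')$, and likewise $\phi(y',y')\subset\phi(y',y'')$. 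Writing $\phi(y',y')=\{z\}$, we thus have $z\in\phi(y,y')\cap\phi(y',y'')$. Now, since $A$ does not cut $\phi(y,y')$ we have $\phi(y,y')\subset A$ or $\phi(y,y')\subset A^c$, and similarly for $\phi(y',y'')$; because both sets contain the common point $z$, they must lie on the \emph{same} side of $A$. Consequently $\phi(y,y')\cup\phi(y',y'')$ lies entirely in $A$ or entirely in $A^c$, and since $\phi(y,y'')\subset\phi(y,y')\cup\phi(y',y'')$ by the gauge inclusion, the same holds for $\phi(y,y'')$. Hence $A$ does not cut $\phi(y,y'')$, i.e.\ $d_A(y,y'')=0$, which establishes the triangle inequality.

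I do not anticipate any genuine obstacle: the whole argument rests on the single observation that the singleton $\phi(y',y')$ acts as a common point pinning $\phi(y,y')$ and $\phi(y',y'')$ to the same side of $A$. This is precisely where the hypothesis that $\phi(y,y)$ is a singleton is used — without it the statement is false, since then $\phi(y,y')$ and $\phi(y',y'')$ could sit on opposite sides of $A$ with their union cutting $\phi(y,y'')$.
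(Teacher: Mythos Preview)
Your proof is correct and essentially identical to the paper's. Both arguments reduce the triangle inequality to showing that ``distance zero'' is transitive, and both use the key observation that the singleton $\phi(y',y')$ lies in $\phi(y,y')\cap\phi(y',y'')$ (via the gauge inclusion applied to the triple $(y',y,y')$), forcing the two sets to the same side of $A$; the paper simply normalizes by replacing $A$ with $A^c$ so that $\phi(y,y')\subset A$, whereas you keep both cases and argue they agree through the common point $z$.
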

\begin{proof}
The symmetry follows from the symmetry of $\phi$. Also, since $A$ cannot cut any singleton, $d_A$ vanishes on the diagonal. As $d_A$ takes values in $\{0,1\}$, the triangle inequality amounts to checking that being at distance zero is a transitive relation. Suppose that $d_A(y,y')=d_A(y',y'')=0$. Replacing $A$ by its complement if necessary, we can suppose that $\phi(y,y')\subset A$. In particular, $\phi(y,y')\cap\phi(y',y'')\subset A$. Moreover, $\phi(y',y')\subset\phi(y',y)\cup\phi(y,y')=\phi(y,y')$ and similarly $\phi(y',y')\subset \phi(y',y'')$. Therefore, $\phi(y,y')\cap\phi(y',y'')$ contains the singleton $\phi(y',y')$. In particular, $\phi(y',y'')\cap A\neq \emptyset$. As $d_A(y',y'')=0$, it follows that $\phi(y',y'')\subset A$. Therefore $\phi(y,y'')\subset\phi(y,y')\cup\phi(y',y'')\subset A$, so $d_A(y,y'')=0$.
\end{proof}

\section{Lifting measured walls structures}\label{sec:construction}

In this section, all sets are assumed to be (at most) countable.

\begin{defn} Let $\kappa$ be a symmetric, non-negative kernel on a set $X$.
\begin{enumerate}
 \item We say that $\kappa$ is \emph{proper} if, for every $x\in X,\,R\geq0$, the set $\{y\in X: \kappa(x,y)\leq R\}$ is finite;
 \item Let $Y$ be a subset of $X$. We say that $\kappa$ is \emph{proper on the subset $Y$} if the restriction of $\kappa$ on $Y$ is a proper kernel.
\end{enumerate}
\end{defn}

The element $(w,x)\in W\times X$ will be denoted by $wx$. The aim of this section is to prove:

\begin{thm}
Let $X,W$ be sets, $\A=2^{(X)}$. Let $\phi$ be an $\A$-gauge on $W$ and assume that $\phi(w,w)=\emptyset$ for all $w\in W$. Let $(X,\mu)$ be a
measured walls structure.

(i) There is a naturally defined measure $\tilde{\mu}$ on $2^{W\times X}$ such that $(W\times X,\tilde{\mu})$ is a measured walls structure with corresponding pseudo-distance given by
$$d_{\tilde{\mu}}(w_1x_1,w_2x_2)=\mu\{A|A\vdash\phi(w_1,w_2)\cup\{x_1,x_2\}\}.$$

(ii) Suppose that $X$ is a $G$-set, $W$ a $G$-group, and $\phi$ is $W$-invariant and $G$-equivariant. If $(X,\mu)$ is uniform (respectively invariant) under $G$, then $(W\times X,\tilde{\mu})$ is uniform (resp. invariant) under $W\rtimes G$.

(iii) Say that $\phi$ is {\it proper} if $\{w\in W|\phi(w,w')\subset F\}$ is finite whenever $F$ is finite and $w'\in W$.
If $d_\mu$ is proper and $\phi$ is proper, then $d_{\tilde{\mu}}$ is proper.
\label{cons}\end{thm}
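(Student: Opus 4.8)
The plan is to construct $\tilde\mu$ explicitly as a sum (or integral) of cut pseudo-metrics on $W\times X$, one for each ``combined half-space'', and then verify the three claims in turn. First, for the construction in (i): given the $\A$-gauge $\phi$ on $W$ and the measured walls structure $(X,\mu)$, I would define, for each subset $A\subset X$, a subset $\widehat A\subset W\times X$ by declaring $wx\in\widehat A$ according to whether $x\in A$ — but the subtlety is that a half-space of $X$ should only ``see'' a copy $\{w\}\times X$ when the gauge allows it to distinguish $w$ from a reference point. The natural recipe, consistent with the target formula, is: a combined half-space is determined by a pair $(A,\text{choice})$ where $A\subset X$ and we additionally record, on the ``$\phi$-level'', which side of the partition induced by $A\vdash\phi(w,w')$ each $w$ lies. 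Concretely, I would push forward $\mu$ under the map $2^X\to 2^{W\times X}$ sending $A$ to $\{wx : $ the pair $(w,x)$ lies on a fixed side of the wall built from $A$ together with the gauge data$\}$, and integrate over the (finitely many, for fixed finite $\phi$-support) sign choices. The key computational fact to extract is Lemma \ref{lemgap}: for fixed $A$, the function $d_A$ built from the gauge $\phi$ augmented by the endpoints $\{x_1,x_2\}$ is a pseudo-distance taking values in $\{0,1\}$, so summing $d_A$ against $\mu$ over $A$ yields a genuine pseudo-distance, and by construction this sum is exactly $\mu\{A : A\vdash\phi(w_1,w_2)\cup\{x_1,x_2\}\}$. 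The finiteness $d_{\tilde\mu}(w_1x_1,w_2x_2)<\infty$ follows because $A\vdash\phi(w_1,w_2)\cup\{x_1,x_2\}$ forces $A$ to separate two points of the finite set $\phi(w_1,w_2)\cup\{x_1,x_2\}$, hence $\{A:A\vdash\phi(w_1,w_2)\cup\{x_1,x_2\}\}\subset\bigcup_{i,j}(\A_{z_i}\bigtriangleup\A_{z_j})$ over the finitely many points $z_i$, a set of finite $\mu$-measure by Definition \ref{MesWalStr}.

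For (ii), equivariance is a bookkeeping check. The group $W\rtimes G$ acts on $W\times X$ by $(v,g)\cdot wx = (v\,(g\cdot w))\,(g\cdot x)$ (using $W$-invariance of $\phi$ to translate in the $W$-direction and $G$-equivariance to move in both coordinates). Since $\phi$ is $W$-invariant, $\phi(vw_1,vw_2)=\phi(w_1,w_2)$, and $G$-equivariance gives $\phi(g w_1, g w_2)=g\cdot\phi(w_1,w_2)$ and $g\cdot\{x_1,x_2\}=\{gx_1,gx_2\}$; also $A\vdash S$ iff $gA\vdash gS$. Feeding this into the formula from (i) shows $d_{\tilde\mu}((v,g)\cdot w_1x_1,(v,g)\cdot w_2x_2)=\mu\{A : A\vdash g\cdot(\phi(w_1,w_2)\cup\{x_1,x_2\})\}=(g^{-1}_*\mu)\{A : A\vdash \phi(w_1,w_2)\cup\{x_1,x_2\}\}$. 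If $\mu$ is $G$-invariant this equals $d_{\tilde\mu}(w_1x_1,w_2x_2)$, giving invariance; if $\mu$ is merely uniform, then $d_\mu$ being a uniform kernel means $\mu\{A:A\vdash S\}$ is, for each fixed pair generating $S$, bounded over the $G$-orbit — one has to argue that the bound for a two-point separation set propagates to a bound for a finite separation set, which follows since $\mu\{A:A\vdash S\}\le\sum_{z,z'\in S}\mu(\A_z\bigtriangleup\A_{z'})$ and each summand is uniformly bounded. That handles uniformity for $W\rtimes G$.

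For (iii), properness: fix $w_1x_1$ and $R\ge0$; I must show $\{w_2x_2 : d_{\tilde\mu}(w_1x_1,w_2x_2)\le R\}$ is finite. The idea is to bound both coordinates. Comparing the target distance to $d_\mu$: taking $w_2=w_1$ one sees $d_{\tilde\mu}(w_1x_1,w_1x_2)=\mu\{A:A\vdash\{x_1,x_2\}\}=\mu(\A_{x_1}\bigtriangleup\A_{x_2})=d_\mu(x_1,x_2)$, and in general a separation of $\{x_1,x_2\}$ is in particular a separation of $\phi(w_1,w_2)\cup\{x_1,x_2\}$ once one also knows $A$ does not cut $\phi(w_1,w_2)$ — so I'd instead lower-bound $d_{\tilde\mu}$ directly: any $A$ with $A\vdash\{x_1,x_2\}$ but $A$ not cutting $\phi(w_1,w_2)$ still contributes, and more cleanly, $d_{\tilde\mu}(w_1x_1,w_2x_2)\ge$ (contribution from $A$'s separating $x_1$ from $x_2$) which after a short argument dominates a positive multiple of $d_\mu(x_1,x_2)$, forcing $x_2$ into a finite set by properness of $d_\mu$. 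Once $x_2$ ranges over a finite set, I need to control $w_2$: here I use that if $\phi(w_1,w_2)$ is \emph{not} contained in a fixed finite set $F$, then it contains a point far from the finitely many relevant points, and one can find $A$ (again of finite $\mu$-measure contribution, indeed a positive lower bound coming from $d_\mu$ restricted to that enlarged finite point set being bounded below on distinct points) cutting $\phi(w_1,w_2)\cup\{x_1,x_2\}$; summing these forces $d_{\tilde\mu}\to\infty$ as $\phi(w_1,w_2)$ grows, so $\phi(w_1,w_2)\subset F$ for some finite $F$ depending only on $R,w_1x_1$, whence $w_2$ lies in the finite set $\{w:\phi(w,w_1)\subset F\}$ by properness of $\phi$. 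The main obstacle I anticipate is precisely this last step — turning ``$\phi(w_1,w_2)$ is large'' into a genuine lower bound on $d_{\tilde\mu}$, since $\mu$ need not assign uniform mass to walls separating arbitrary pairs of points; the fix is to first use properness of $d_\mu$ to confine all of $x_1,x_2$ and a controlled neighborhood to a finite region, then exploit that on any finite set with more than one point $d_\mu$ is bounded below by a positive constant (else two points would be at $d_\mu$-distance $0$ and could be identified, contradicting properness on that finite set unless they coincide), so each extra point of $\phi(w_1,w_2)$ lying outside the controlled region contributes a definite amount. I would organize (iii) as: Step 1, bound $d_\mu(x_1,x_2)$ hence pin down $x_2$; Step 2, bound $\#\phi(w_1,w_2)$ hence (via properness of $\phi$) pin down $w_2$; conclude.
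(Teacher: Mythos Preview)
Your plan for parts (i) and (ii) is essentially the paper's approach. For (i) the paper makes the construction precise by associating to each $A\subset X$ the $\{0,1\}$-valued pseudo-distance $d_A$ of Lemma~\ref{lemgap} on $W\times X$ (via the augmented gauge $\phi'(wx,w'x')=\phi(w,w')\cup\{x,x'\}$), letting $\nu_A$ be the half-counting measure on the partition classes of $d_A$, and then defining $\tilde\mu=\int\nu_A\,d\mu(A)$ through the Riesz representation theorem on $C_c(P'(W\times X))$. Your ``pushforward plus sign choices'' is a loose paraphrase of this, but note that for a fixed $A$ the partition of $W\times X$ induced by $d_A$ can have infinitely many classes, not finitely many, so it is not literally a pushforward of $\mu$ along a map $2^X\to 2^{W\times X}$. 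Your argument for (ii) matches Lemmas~\ref{tildeequiv} and~\ref{tilduf} in the paper.

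For (iii) you are overcomplicating things, and your proposed workaround contains a real error. The ``main obstacle'' you anticipate does not exist: the very same inclusion you use in Step~1 applies to \emph{every} point $v\in\phi(w_1,w_2)\cup\{x_2\}$. Since $\{v,x_1\}\subset\phi(w_1,w_2)\cup\{x_1,x_2\}$, any $A$ with $A\vdash\{v,x_1\}$ also cuts the larger set, hence
\[
d_\mu(v,x_1)=\mu\{A:A\vdash\{v,x_1\}\}\le\mu\{A:A\vdash\phi(w_1,w_2)\cup\{x_1,x_2\}\}=d_{\tilde\mu}(w_1x_1,w_2x_2)\le R.
\]
Thus the entire set $\phi(w_1,w_2)\cup\{x_2\}$ lies in the $d_\mu$-ball of radius $R$ around $x_1$, which is finite by properness of $d_\mu$; properness of $\phi$ then pins down $w_2$. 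This is exactly Proposition~\ref{properness} in the paper, proved in two lines. There is no need to bound $\#\phi(w_1,w_2)$ or to argue that ``each extra point contributes a definite amount''. Moreover, your supporting claim that ``on any finite set with more than one point $d_\mu$ is bounded below by a positive constant'' does not follow from properness: a proper pseudo-distance can vanish on distinct pairs (properness only forces $\{y:d_\mu(x,y)=0\}$ to be finite, not a singleton), so the lower-bound mechanism you propose would not work as stated.
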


\subsection{The $\{0,1\}$-valued pseudo-distance $d_A$}

Let us now consider two sets $X,W$, and $\A=2^{(X)}$. Let $\phi$ be an $\A$-gauge on $W$ and assume that $\phi(w,w)=\emptyset$ for all $w\in W$. Define an $\A$-gauge on $W\times X$
as follows: $\phi'(wx,w'x')=\{x,x'\}\cup\phi(w,w')$. Note that $\phi'(wx,wx)=\{x\}$. By Lemma \ref{lemgap}, for all $A\subset X$, the corresponding map $d_A$ is a pseudo-distance on $W\times X$.

Now, every $\{0,1\}$-valued pseudo-distance $d$ on a set $S$ defines a partition by a family of subsets $(B_i)$ of $S$, which are the classes of the equivalence relation $d(x,y)=0$. We can define a measured walls structure on $S$ as $\nu_d=\frac{1}{2}\sum_i\delta_{B_i}$ whose associated distance is obviously $d$.
Here we define $\nu_A$ as $\nu_{d_A}$.

\subsection{Integration of the $\nu_A$'s}\label{integ}

Suppose now that $(X,\mu)$ is a measured walls structure, and recall that 
$\phi$ takes values in finite subsets of $X$.
We want to define $\tilde{\mu}$ as the measure $``\int\nu_Ad\mu(A)"$.
For this to make sense, observe that $\nu_A$ is a Radon measure on 
$P'(W\times X)$. The locally compact topological space $P'(W\times X)$ has 
a prebasis consisting of compact subsets $K=\{B|w_1x_1\in B,w_2x_2\notin B\}$
for $w_1x_1,w_2x_2\in W\times X$.
For such a subset $K$, we have $\nu_A(K)=d_A(w_1x_1,w_2x_2)/2$. Moreover,
the map $A\mapsto d_A(w_1x_1,w_2x_2)$ is the characteristic function of the 
subset $\{A|A\vdash\phi(w_1,w_2)\cup\{x_1,x_2\}\}\subset 2^X$; in particular,
it is a measurable function. It follows that $A\mapsto \nu_A(K)$ is 
measurable, and
\[
\int_{2^X} \nu_A(K)d\mu(A)=\frac12\mu\{A|A\vdash\phi(w_1,w_2)\cup\{x_1,x_2\}\},
\]
which is always finite as $\phi(w_1,w_2)\cup\{x_1,x_2\}$ is finite. 

\begin{lem}For every continuous compactly supported function $f$ on 
$P'(W\times X)$, the function $A\mapsto \nu_A(f) := \int_{P'(W\times X)}
f\,d\nu_A$ is measurable on $2^X$ and the integral $\int_{2^X} \nu_A(f)
d\mu(A)$ is finite.
\end{lem}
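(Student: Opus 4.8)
The plan is to reduce the statement to the computation carried out in the lines immediately preceding the lemma, where it is shown that for a prebasic set $K=K_{p,q}:=\{A\mid p\in A,\ q\notin A\}$ (with $p,q\in S:=W\times X$) one has $2\nu_A(K)=d_A(p,q)=\mathbf 1_{\{A\mid A\vdash\phi'(p,q)\}}(A)$, a measurable function of $A$, and $\int_{2^X}\nu_A(K)\,d\mu(A)=\tfrac12\mu\{A\mid A\vdash\phi'(p,q)\}<\infty$. First I would use that, since all sets here are countable, $S$ is countable and hence $P'(S)$ is a locally compact, second countable, metrizable, totally disconnected space with a basis of compact open sets (the sets $K_{p_1,q_1}\cap\dots\cap K_{p_m,q_m}$); in particular every $f\in C_c(P'(S))$ is a uniform limit of finite linear combinations $f_k=\sum_j c_j^{(k)}\mathbf 1_{U_j^{(k)}}$ of indicators of compact open sets, which one may take all supported in a fixed compact open set $V\supseteq\Supp f$. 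Since each $\nu_A$ is Radon on $P'(S)$ (as recalled in the text), $\nu_A(V)<\infty$ and $|\nu_A(f)-\nu_A(f_k)|\le\|f-f_k\|_\infty\,\nu_A(V)\to0$ for every $A$; thus $A\mapsto\nu_A(f)$ is a pointwise limit of the $A\mapsto\nu_A(f_k)$, and it suffices to prove (a) $A\mapsto\nu_A(U)$ is measurable for every compact open $U\subseteq P'(S)$, and (b) $\int_{2^X}\nu_A(V)\,d\mu(A)<\infty$.

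For (a) I would argue as follows. Every compact open $U$ is a finite union of basic sets $E=K_{p_1,q_1}\cap\dots\cap K_{p_m,q_m}$ with $m\ge1$, and since $\nu_A$ is additive, inclusion–exclusion (an intersection of such $E$'s is again of this form) reduces the problem to showing that $A\mapsto\nu_A(E)$ is measurable. Writing $F_+=\{p_1,\dots,p_m\}\neq\emptyset$ and $F_-=\{q_1,\dots,q_m\}$, and recalling that $\nu_A=\tfrac12\sum_i\delta_{B_i^A}$ where $(B_i^A)_i$ is the partition of $S$ induced by the $\{0,1\}$-valued pseudo-distance $d_A$, one gets $\nu_A(E)=\tfrac12\#\{i\mid F_+\subseteq B_i^A,\ B_i^A\cap F_-=\emptyset\}$. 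Because $F_+\neq\emptyset$, at most one block of the partition can contain $F_+$, so $\nu_A(E)\in\{0,\tfrac12\}$, and $\nu_A(E)=\tfrac12$ precisely when $d_A(p,p')=0$ for all $p,p'\in F_+$ and $d_A(p,q)=1$ for all $(p,q)\in F_+\times F_-$. This is a finite conjunction of conditions, each defining a measurable (indeed clopen) subset of $2^X$ by the prebasis computation recalled above; hence $A\mapsto\nu_A(E)$, and therefore $A\mapsto\nu_A(U)$ and $A\mapsto\nu_A(f)$, is measurable.

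For (b) I would write the fixed $V$ as $\bigcup_{r=1}^N E_r$ with each $E_r$ of the above form, note that $E_r\subseteq K_{p^{(r)},q^{(r)}}$ for suitable $p^{(r)},q^{(r)}\in S$, and deduce $\nu_A(V)\le\sum_r\nu_A(K_{p^{(r)},q^{(r)}})$, so that $\int_{2^X}\nu_A(V)\,d\mu(A)\le\sum_r\tfrac12\mu\{A\mid A\vdash\phi'(p^{(r)},q^{(r)})\}<\infty$ by the prebasis computation; since $|\nu_A(f)|\le\|f\|_\infty\,\nu_A(V)$, the integral $\int_{2^X}\nu_A(f)\,d\mu(A)$ is then well-defined and finite. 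The only genuinely delicate point is part (a): organizing the passage of measurability from the prebasic sets (where it is already established) to arbitrary compact open sets via the inclusion–exclusion bookkeeping, together with the elementary but crucial observation that a cylinder with non-empty positive part meets at most one block of the partition $(B_i^A)_i$. Everything else is routine once one invokes that $P'(S)$ is totally disconnected (so that step functions over compact opens are uniformly dense in $C_c$) and that the $\nu_A$ are Radon.
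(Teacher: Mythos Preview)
Your argument is correct and follows the same overall strategy as the paper: approximate $f$ uniformly by step functions supported on cylinders, pass measurability from cylinders to $f$ by pointwise limits, and bound the integral via the prebasis computation. The finiteness part (b) is essentially identical to the paper's.

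The genuine difference is in how measurability of $A\mapsto\nu_A(C)$ for a cylinder $C=\bigcap_{i=1}^k K_i$ is established. The paper asserts the product formula $(2\nu_A)(C)=\prod_{i=1}^k(2\nu_A)(K_i)$, whereas you argue directly from the partition $(B_i^A)$: since $F_+\neq\emptyset$, at most one block can contain $F_+$, so $\nu_A(C)\in\{0,\tfrac12\}$, and the event $\{\nu_A(C)=\tfrac12\}$ is a finite boolean combination of the clopen sets $\{A:A\vdash\phi'(p,q)\}$. Your route is the sound one here: the product formula as stated is \emph{not} valid in general. Indeed, $(2\nu_A)(K_{p_j,q_j})=1$ merely says that the block containing $p_j$ avoids $q_j$; the product of these equals $1$ whenever this holds for each $j$ separately, even if the $p_j$ lie in distinct blocks, in which case no single block contains all of $F_+$ and $(2\nu_A)(C)=0$. (A concrete instance: with two blocks $B$ and $B^c$, take $p_1\in B$, $p_2\in B^c$, $q_1=p_2$, $q_2=p_1$.) So your partition-based bookkeeping is not just an alternative but in fact repairs a gap in the paper's argument, at no real extra cost.
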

\begin{proof} 
One can prove that $f$ is the uniform limit of functions $f_n$ which are 
linear combinations of characteristic functions of cylinders. Here, we call 
\emph{cylinder} a finite intersection of (at least one) elements of the 
prebasis. First, we claim that $A\mapsto \nu_A(C)$ is measurable for any 
cylinder $C$. Indeed, let us write $C=\bigcap_{j=1}^k K_j$, where 
$K_j=\{B|u_jx_j\in B, v_jy_j\notin B\}$ and $u_jx_j,v_jy_j\in W\times X$.
Fix $A\subset X$ and denote $W\times X = \bigsqcup_i B_i$ the partition 
induced by
the equivalence relation ``$d_A=0$''. If for some (necessarily unique) $i$, 
one has $u_jx_j\in B_i$ and $v_jy_j\notin B_i$ for all $j$, then $2\nu_A(C)=
\delta_{B_i}(C) + \sum_{\ell\neq i}\delta_{B_{\ell}}(C) = 1+0 = 1$; otherwise, 
one has $\nu_A(C)=0$. Hence, $A\mapsto 2\nu_A(C)$ is the characteristic 
function of the set 
\[
\{A\subset X | A \not \vdash \phi'(u_jx_j,u_{j'}x_{j'}) \text{ and } 
A\vdash\phi'(u_jx_j,v_{j}y_{j}) \text{ for all } j,j' \}
\]
(recall that $\phi'(wx,w'x')=\{x,x'\}\cup\phi(w,w')$); in particular, it is a
measurable function.

Then, the functions $A\mapsto\nu_A(f_n)$ are measurable as linear combinations of functions $A\mapsto \nu_A(C_i)$ associated to cylinders $C_i$, and the function $A\mapsto \nu_A(f)$ is measurable as the pointwise limit of functions $A\mapsto \nu_A(f_n)$.

Moreover, as the support of $f$ is contained in the union of finitely many
elements of the prebasis, say $\Supp(f)\subset\bigcup_{j=1}^n K_j$, we have 
\[
 \int_{2^X} \nu_A(f)d\mu(A) \le 
   \int_{2^X} \|f\|_\infty \sum_{j=1}^n\nu_A(K_i)d\mu(A) = 
   \|f\|_\infty \sum_{j=1}^n \int_{2^X} \nu_A(K_i)d\mu(A) \ .
\]
This proves that the integral $\int_{2^X} \nu_A(f)d\mu(A)$ is finite.
\end{proof}
It follows that the map $f\mapsto \int_{2^X}\nu_A(f)d\mu(A)$ is a positive linear form on $C_c(P'(W\times X))$, so by the Riesz representation theorem it is the integral with respect to some Radon measure $\tilde{\mu}$. If $K$ is the element of the prebasis defined above, we may evaluate on the characteristic function of $K$ (which is continuous) and the previous computation yields $\tilde{\mu}(K)=\frac12\mu\{A|A\vdash\phi(w_1,w_2)\cup\{x_1,x_2\}\}$, so that $(W\times X,\tilde{\mu})$ is a measured walls structure with associated distance
$$d_{\tilde{\mu}}(w_1x_1,w_2x_2)=\mu\{A|A\vdash\phi(w_1,w_2)\cup\{x_1,x_2\}\}.$$
This concludes the proof of part (i) of Theorem \ref{cons}.

\subsection{The case of $G$-sets}\label{Case-G-sets}

Now suppose that $X$ and $W$ are $G$-sets and that $\phi$ is $G$-equivariant, i.e. $$\phi(gw,gw')=g\phi(w,w')\quad \forall g\in G, w,w'\in W.$$ The set $W\times X$ is a $G$-set under the product action.

\begin{lem}
 For all $g\in G$, one has $\widetilde{g_*\mu}=g_*\tilde{\mu}$ and therefore
 $g\cdot d_{\tilde\mu}=d_{\widetilde{g_*\mu}}$.\label{tildeequiv}
\end{lem}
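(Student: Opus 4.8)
The statement to establish is the $G$-equivariance formula $\widetilde{g_*\mu}=g_*\tilde\mu$ for every $g\in G$; the companion identity $g\cdot d_{\tilde\mu}=d_{\widetilde{g_*\mu}}$ is a formal consequence, since the pseudo-distance associated to a pushed-forward measure is the pushed-forward pseudo-distance. The natural approach is to test both measures against the characteristic functions of the prebasis sets $K=\{A\mid w_1x_1\in A,\ w_2x_2\notin A\}$ of $P'(W\times X)$, since these generate the relevant $\sigma$-algebra and both $\tilde\mu$ and $g_*\tilde\mu$ are Radon, hence determined by their values on such sets (more precisely, by the positive linear functionals they induce on $C_c(P'(W\times X))$).

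First I would record how the $G$-action on $W\times X$ moves the prebasis: $g\cdot K = \{A\mid w_1x_1\in g^{-1}A,\ w_2x_2\notin g^{-1}A\} = \{A\mid (gw_1)(gx_1)\in A,\ (gw_2)(gx_2)\notin A\}$, i.e. $g\cdot K$ is again a prebasis element, with $w_ix_i$ replaced by $(gw_i)(gx_i)$. Then, using the explicit formula from part (i) of Theorem \ref{cons},
\[
(g_*\tilde\mu)(K)=\tilde\mu(g^{-1}\cdot K)=\tfrac12\,\mu\{A\mid A\vdash\phi(g^{-1}w_1,g^{-1}w_2)\cup\{g^{-1}x_1,g^{-1}x_2\}\}.
\]
On the other hand, $\widetilde{g_*\mu}(K)=\tfrac12\,(g_*\mu)\{A\mid A\vdash\phi(w_1,w_2)\cup\{x_1,x_2\}\}=\tfrac12\,\mu\{A\mid gA\vdash\phi(w_1,w_2)\cup\{x_1,x_2\}\}$. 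The key observation is that $gA\vdash F$ if and only if $A\vdash g^{-1}F$ (cutting a set is preserved under applying a bijection to both the cutting set and the cut set), and that $g^{-1}$ distributes over unions and commutes with $\phi$ by $G$-equivariance: $g^{-1}\big(\phi(w_1,w_2)\cup\{x_1,x_2\}\big)=\phi(g^{-1}w_1,g^{-1}w_2)\cup\{g^{-1}x_1,g^{-1}x_2\}$. Combining these shows the two quantities agree on every prebasis element.

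To upgrade agreement on the prebasis to equality of measures, I would note that both $f\mapsto\int\nu_A(f)\,d(g_*\mu)(A)$ (which represents $\widetilde{g_*\mu}$) and $f\mapsto\int(\nu_A\text{ transported})\,d\mu$ (which represents $g_*\tilde\mu$) are positive linear functionals on $C_c(P'(W\times X))$; having checked they coincide on indicator functions of prebasis sets, they coincide on cylinders (finite intersections), hence on the uniform closure of linear combinations of cylinder indicators, which is all of $C_c(P'(W\times X))$ — exactly the density argument already used in the lemma preceding the proof of part (i). By the Riesz representation theorem the two Radon measures are therefore equal. The main obstacle, such as it is, is purely bookkeeping: getting the direction of every inverse right (the $G$-action on functions on $2^X$ is by $g\cdot f(A)=f(g^{-1}A)$, so pushing forward $\mu$ and pulling back prebasis sets each introduce an inverse, and one must check they are the \emph{same} inverse) and verifying the elementary set-theoretic fact that $A\vdash F\iff gA\vdash gF$; there is no analytic difficulty beyond what Lemma in Subsection \ref{integ} already handles.
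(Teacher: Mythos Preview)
Your approach is correct and rests on the same key ingredient as the paper's proof: the $G$-equivariance of $\phi$ together with the elementary fact $gA\vdash gF\Leftrightarrow A\vdash F$. The organization differs slightly. The paper isolates the identity at the level of the integrand, proving the Claim $\nu_{gA}=g_*\nu_A$ (equivalently $\nu_{gA}(f)=\nu_A(g^{-1}\cdot f)$ for all $f$) by observing that $d_{gA}(g\cdot w_1x_1,g\cdot w_2x_2)=d_A(w_1x_1,w_2x_2)$, so that the partition defining $\nu_{gA}$ is the $g$-translate of the one defining $\nu_A$; integrating against $\mu$ then gives the result for every $f\in C_c$ at once, with no further density step. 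You instead evaluate both measures directly on prebasis sets using the explicit formula $\tilde\mu(K)=\tfrac12\mu\{A\mid A\vdash\phi(w_1,w_2)\cup\{x_1,x_2\}\}$ already established, and then extend. This is more hands-on but equally valid.

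One small point: your sentence ``having checked they coincide on indicator functions of prebasis sets, they coincide on cylinders'' is not automatic---agreement of two linear functionals on $\{1_{K_i}\}$ does not by itself force agreement on $1_{\cap K_i}$. The fix is immediate: your equivariance computation applies verbatim to cylinders (since $g^{-1}\cdot C$ is again a cylinder with $g^{-1}$-translated defining points), or, equivalently, your observation $A\vdash F\Leftrightarrow gA\vdash gF$ already yields $d_{gA}(g\cdot,g\cdot)=d_A(\cdot,\cdot)$ and hence $\nu_{gA}=g_*\nu_A$, which is exactly the paper's Claim and dispenses with the density argument altogether.
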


In particular if $\mu$ is invariant under $G$, then $\tilde{\mu}$ is $G$-invariant, and hence $d_{\tilde{\mu}}$ is $G$-invariant as well.

\begin{proof}
 The induced action on $C(P'(W\times X))$ is defined by the formula $g\cdot f (A) = f(g^{-1}\cdot A)$ and the measure $g_*\tilde\mu$ satisfies
 \[
  (g_*\tilde\mu)(f) = \tilde\mu(g^{-1}\cdot f) = \int_{2^X} \nu_A(g^{-1}\cdot f)d\mu(A)
 \]
 for any $f\in C_c(W\times X)$. On the other hand, setting $\varphi(A) = \nu_A(f)$, we obtain
 \begin{eqnarray*}
  \widetilde{g_* \mu}(f) & = & \int_{2^X} \nu_A(f)d(g_*\mu)(A) = (g_*\mu)(\varphi) , \quad \text{and} \\
  \int_{2^X} \nu_{gA}(f) d\mu(A) & = & \int_{2^X} (g^{-1}\cdot \varphi)(A) d\mu(A) = \mu(g^{-1}\cdot \varphi) \quad .
 \end{eqnarray*}
 Therefore, we have $\displaystyle\widetilde{g_* \mu}(f) = \int_{2^X} \nu_{gA}(f) d\mu(A)$. It remains to prove:
 \begin{claa}
  One has $\nu_{gA}(f) = \nu_A(g^{-1}\cdot f)$, for all $A\in 2^X$.
 \end{claa}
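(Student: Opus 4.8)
The plan is to unwind $\nu_{gA}$ and $\nu_A$ into their explicit form as countable sums of Dirac masses and to compare them through the bijection $p\mapsto g\cdot p$ of $W\times X$. Recall that $d_A$ is a $\{0,1\}$-valued pseudo-distance on $W\times X$, so it partitions $W\times X$ into classes $(B_i)_i$ (points pairwise at $d_A$-distance $0$), and by construction $\nu_A=\frac12\sum_i\delta_{B_i}$, viewed as a Radon measure on $P'(W\times X)$; similarly $\nu_{gA}=\frac12\sum_j\delta_{B'_j}$ for the $d_{gA}$-classes $(B'_j)_j$. Since $W\times X$ is countable and both measures are purely atomic, there is no measure-theoretic subtlety: the whole statement reduces to matching up these two partitions.

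First I would record how the $G$-action interacts with $d_A$. Writing $\phi'(wx,w'x')=\{x,x'\}\cup\phi(w,w')$, the $G$-equivariance of $\phi$ gives $\phi'(g\cdot p,g\cdot q)=g\phi'(p,q)$ for all $p,q\in W\times X$. Next, for any bijection $g$ of $X$ and any finite $F\subset X$ one has $gA\vdash gF$ if and only if $A\vdash F$, since a bijection preserves the property of inducing a non-trivial partition on a subset. Combining the two, $gA\vdash\phi'(g\cdot p,g\cdot q)$ if and only if $A\vdash\phi'(p,q)$, that is,
\[
d_{gA}(g\cdot p,g\cdot q)=d_A(p,q)\qquad\text{for all }p,q\in W\times X.
\]
Hence $p$ and $q$ lie in the same $d_A$-class precisely when $g\cdot p$ and $g\cdot q$ lie in the same $d_{gA}$-class, so the bijection $p\mapsto g\cdot p$ carries the family $(B_i)$ onto the family $(B'_j)$; concretely $\nu_{gA}=\frac12\sum_i\delta_{gB_i}$, i.e. $\nu_{gA}$ is the push-forward of $\nu_A$ under the homeomorphism $B\mapsto gB$ of $2^{W\times X}$ (which preserves $P'(W\times X)$). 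Evaluating on $f\in C_c(P'(W\times X))$ and using the definition $(g\cdot f)(B)=f(g^{-1}B)$ of the induced action then gives $\nu_{gA}(f)=\frac12\sum_i f(gB_i)=\frac12\sum_i(g^{-1}\cdot f)(B_i)=\nu_A(g^{-1}\cdot f)$, as claimed.

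The argument is essentially bookkeeping, and the only place to be careful — which I would regard as the main (mild) obstacle — is the direction of the actions: the equivariance of $\phi$ forces $d_{gA}$, not $d_A$, to appear on the image side, and correspondingly the atoms of $\nu_{gA}$ sit at the translated classes $gB_i$; this is exactly what makes $\nu_{gA}(f)$ equal to $\nu_A(g^{-1}\cdot f)$ rather than $\nu_A(g\cdot f)$. One should also note, so that the displayed sums genuinely define Radon measures on $P'(W\times X)$, that the classes $B_i$ are always non-empty and that the full set $W\times X$ occurs as a class only in the degenerate case $d_A\equiv 0$, a case preserved by $g$ and carrying no mass once one restricts to $P'(W\times X)$.
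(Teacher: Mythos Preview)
Your argument is correct and follows essentially the same route as the paper's: establish $d_{gA}(g\cdot p,g\cdot q)=d_A(p,q)$ from the $G$-equivariance of $\phi$, deduce that the $d_{gA}$-partition is the $g$-translate $(gB_i)_i$ of the $d_A$-partition, and evaluate the resulting sum of Dirac masses on $f$. Your additional remarks on the direction of the action and on the degenerate class $W\times X$ are sound but not needed for the paper's terse version of the same computation.
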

 The gauge $\phi$ being $G$-equivariant, one can easily prove the relation
 \[
  d_{gA}(g\cdot w_1x_1, g\cdot w_2x_2) = d_{A}(w_1x_1, w_2x_2) \quad ,
 \]
 for all $w_1x_1,w_2x_2 \in W \times X$. Hence, denoting by $(B_i)_i$ the partition of $W\times X$ induced by the pseudodistance $d_A$, the partition induced by $d_{gA}$ is $(gB_i)_i$.
 Consequently, one has
 
 $$2\nu_{gA}(f) = \sum_i\delta_{gB_i}(f) = \sum_i f(gB_i)$$ $$ = \sum_i (g^{-1}\cdot f)(B_i) = 2\nu_A(g^{-1}\cdot f).
 $$
 This proves the claim.
\end{proof}

\begin{lem}
If $\mu$ is uniform under $G$, then so is $\tilde{\mu}$.\label{tilduf}
\end{lem}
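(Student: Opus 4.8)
The plan is to read off a uniform bound for $d_{\tilde\mu}(\gamma\cdot p_1,\gamma\cdot p_2)$, with $p_1=w_1x_1$ and $p_2=w_2x_2$ fixed in $W\times X$ and $\gamma$ ranging over $W\rtimes G$, directly from the explicit distance formula of part~(i) of Theorem~\ref{cons}. First I would unwind the action: writing $\gamma=(v,g)$, it sends $wx$ to $(v\cdot gw)(gx)$, so by Theorem~\ref{cons}(i)
$$d_{\tilde\mu}(\gamma\cdot p_1,\gamma\cdot p_2)=\mu\{A\,|\,A\vdash\phi(v\cdot gw_1,\,v\cdot gw_2)\cup\{gx_1,gx_2\}\}.$$
Now $W$-invariance of $\phi$ eliminates the factor $v$ — so the $W$-direction of $W\rtimes G$ plays no role, and the statement really concerns the $G$-action — while $G$-equivariance gives $\phi(gw_1,gw_2)=g\,\phi(w_1,w_2)$. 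Hence the set to be measured is $\{A\,|\,A\vdash gF\}$, where $F:=\phi(w_1,w_2)\cup\{x_1,x_2\}$ is a \emph{fixed finite} subset of $X$ (finite because $\phi$ takes values in $\A=2^{(X)}$), independent of $\gamma$.

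The heart of the argument is then the elementary remark that a subset which cuts a finite set must separate a distinguished point of it from another: enumerating $F=\{y_1,\dots,y_k\}$, if $A\vdash gF$ then either $A$ contains $gy_1$ and misses some $gy_i$, or the reverse, so $A\in\A_{gy_1}\bigtriangleup\A_{gy_i}$ for some $i\geq 2$. Thus $\{A\,|\,A\vdash gF\}\subset\bigcup_{i=2}^{k}(\A_{gy_1}\bigtriangleup\A_{gy_i})$, whence
$$d_{\tilde\mu}(\gamma\cdot p_1,\gamma\cdot p_2)\leq\sum_{i=2}^{k}\mu(\A_{gy_1}\bigtriangleup\A_{gy_i})=\sum_{i=2}^{k}d_\mu(gy_1,gy_i).$$
Since $(X,\mu)$ is uniform under $G$, each map $g\mapsto d_\mu(gy_1,gy_i)$ is bounded on $G$, and summing the finitely many bounds produces a constant $C<\infty$ with $d_{\tilde\mu}(\gamma\cdot p_1,\gamma\cdot p_2)\leq C$ for all $\gamma\in W\rtimes G$. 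That is precisely the assertion that $d_{\tilde\mu}$ is a uniform kernel, i.e.\ that $(W\times X,\tilde\mu)$ is uniform under $W\rtimes G$.

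I do not expect any real obstacle here. The two small points to handle with care are: (a) correctly writing down the $W\rtimes G$-action on $W\times X$ and checking that the $v$-factor disappears by $W$-invariance of $\phi$; and (b) the combinatorial inclusion $\{A\,|\,A\vdash gF\}\subset\bigcup_i(\A_{gy_1}\bigtriangleup\A_{gy_i})$ — but this is exactly the single-wall bookkeeping already carried out inside the proof of Lemma~\ref{lemgap}. As an alternative packaging of the $G$-part, one could instead invoke Lemma~\ref{tildeequiv} to write $d_{\tilde\mu}(g\cdot p,g\cdot p')=d_{\widetilde{(g^{-1})_*\mu}}(p,p')$ and then apply Theorem~\ref{cons}(i) to the pushed-forward measure $(g^{-1})_*\mu$; the estimate is the same.
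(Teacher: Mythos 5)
Your proof is correct and follows essentially the same route as the paper: after reducing, via $G$-equivariance (and, in your packaging, $W$-invariance) of $\phi$, to estimating $\mu\{A\,|\,A\vdash gF\}$ for the fixed finite set $F=\phi(w_1,w_2)\cup\{x_1,x_2\}$, you bound it by a finite sum of terms $d_\mu(gy,gz)$ with $y,z\in F$ and invoke uniformity of $\mu$, exactly as the paper does. The only cosmetic difference is that you prove the $(W\rtimes G)$-uniform statement directly, whereas the paper first establishes $G$-uniformity and deduces the $(W\rtimes G)$ version afterwards using $W$-invariance of $\phi$.
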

\begin{proof}
If $wx,w'x'\in W\times X$ and $g$ varies in $G$, we have
$$d_{\tilde\mu}(g\cdot wx,g\cdot w'x')=d_{\tilde\mu}(g\cdot w.g\cdot x,g\cdot w'.g\cdot x')$$
$$=\mu\{A|A\vdash\phi(gw,gw')\cup\{gx,gx'\}\}=\mu\{A|A\vdash gF\}$$
with $F=\phi(w,w')\cup\{x,x'\}$. Now

$$\mu\{A|A\vdash gF\}\le\sum_{y,z\in F}d_\mu(gy,gz)\le\sum_{y,z\in F}(\sup_{g\in G}d_\mu(gy,gz)),$$
which is finite.
\end{proof}

Suppose now moreover that $W$ is a $G$-group, so that $W\times X$ is a $(W\rtimes G)$-set with action $$wg\cdot (w',x)=(w.(g\cdot w'),g\cdot x),$$
and that $\phi$ is $W$-invariant, i.e. $\phi(ww',ww'')=\phi(w',w'')$ for all $w,w',w''\in W$. If $\mu$ is $G$-invariant, then one can see that $\tilde{\mu}$ and $d_{\tilde{\mu}}$ are $(W\rtimes G)$-invariant by using Lemma \ref{tildeequiv}.
For the same reason, if $\mu$ is assumed $G$-uniform instead of invariant, then it follows from Lemma \ref{tilduf} that $\tilde{\mu}$ is $(W\rtimes G)$-uniform as well. This proves part (ii) of Theorem \ref{cons}.

\subsection{Properness of $d_{\tilde{\mu}}$}\label{sec:properness}
We continue with the assumptions of Theorem \ref{cons}.

\begin{prop}
If a subset of $W\times X$ is $d_{\tilde{\mu}}$-bounded, then it is contained in
$$\{w\in W|\phi(w,w')\subset B\}\times B'$$
for some $d_\mu$-bounded subsets $B,B'$ of $X$ and $w'\in W$. If $d_\mu$ is proper,
the converse also holds.\label{properness}
\end{prop}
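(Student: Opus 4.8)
The plan is to analyze the formula $d_{\tilde\mu}(w_1x_1,w_2x_2)=\mu\{A\mid A\vdash\phi(w_1,w_2)\cup\{x_1,x_2\}\}$ from part (i) of Theorem \ref{cons} and see how a bound on this quantity forces both the $X$-coordinates to stay $d_\mu$-bounded and the gauge values $\phi(\cdot,w')$ to stay inside a fixed finite set. First I would record the elementary two-sided estimate relating the ``cut'' measure to the pseudo-distance $d_\mu$: for any finite $F\subset X$ and any $y_0,y_1\in F$, one has $d_\mu(y_0,y_1)=\mu(\A_{y_0}\bigtriangleup\A_{y_1})\le\mu\{A\mid A\vdash F\}$, because any $A$ with $y_0\in A$, $y_1\notin A$ cuts $F$; conversely $\mu\{A\mid A\vdash F\}\le\sum_{y,z\in F}d_\mu(y,z)$, since an $A$ cutting $F$ must separate some pair of points of $F$. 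This is exactly the kind of estimate already used in the proof of Lemma \ref{tilduf}.

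Now suppose $S\subset W\times X$ is $d_{\tilde\mu}$-bounded, say $d_{\tilde\mu}\le R$ on $S$, and fix a basepoint $w'x'\in S$ (assuming $S$ nonempty; otherwise the statement is vacuous). For any $wx\in S$, applying the lower estimate with $F=\phi(w,w')\cup\{x,x'\}$ and the pair $x,x'\in F$ gives $d_\mu(x,x')\le d_{\tilde\mu}(wx,w'x')\le R$, so all $X$-coordinates appearing in $S$ lie in the $d_\mu$-ball $B'$ of radius $R$ about $x'$. Next, to control $\phi(w,w')$: I claim that if $x''\in\phi(w,w')\setminus(B'\cup\{x'\})$ then $A:=\{x''\}$ cuts $F=\phi(w,w')\cup\{x,x'\}$, since $x''\in F$ but $F\not\subset A$ (as $x'\in F\setminus A$); hence each singleton $\{x''\}$ with $x''\in\phi(w,w')$ far from $x'$ contributes $\mu(\A_{x''})$ — wait, more carefully, I should instead bound $\phi(w,w')$ directly. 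The cleaner route: for $x''\in\phi(w,w')$, the set $A=\{x''\}$ cuts $F$ unless $F=\{x''\}$, so $d_{\tilde\mu}(wx,w'x')\ge\mu(\{A\mid A\ni x'',\ A\text{ omits some point of }F\})$; using $d_\mu(x'',x')\le\mu\{A\mid A\vdash F\}\le R$, every element of $\phi(w,w')$ lies in the $d_\mu$-ball $B$ of radius $R$ about $x'$. Since $d_\mu$ being finite doesn't make balls finite in general, $B$ is just $d_\mu$-bounded, which is all that is claimed; so $S\subset\{w\mid\phi(w,w')\subset B\}\times B'$ with $B=B'$ the $d_\mu$-ball of radius $R$ about $x'$.

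For the converse, assume $d_\mu$ is proper and let $B,B'$ be $d_\mu$-bounded and $w'\in W$; I must show $\{w\mid\phi(w,w')\subset B\}\times B'$ is $d_{\tilde\mu}$-bounded. By properness of $d_\mu$, the bounded sets $B$ and $B'$ are finite, hence $B\cup B'$ is a fixed finite subset of $X$. For any two elements $wx,\tilde w\tilde x$ of the set, $\phi(w,w')\subset B$ and $\phi(\tilde w,w')\subset B$, so by the gauge inequality $\phi(w,\tilde w)\subset\phi(w,w')\cup\phi(w',\tilde w)\subset B$, and $\{x,\tilde x\}\subset B'$; thus $\phi(w,\tilde w)\cup\{x,\tilde x\}\subset B\cup B'=:F_0$, a fixed finite set. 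Therefore $d_{\tilde\mu}(wx,\tilde w\tilde x)=\mu\{A\mid A\vdash\phi(w,\tilde w)\cup\{x,\tilde x\}\}\le\mu\{A\mid A\vdash F_0\}\le\sum_{y,z\in F_0}d_\mu(y,z)<\infty$, a bound independent of the two chosen elements. Hence the set is $d_{\tilde\mu}$-bounded.

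The main obstacle I anticipate is the direction extracting control of $\phi(w,w')$ from the bound on $d_{\tilde\mu}$: one must be careful that testing against singletons $A=\{x''\}$ really does detect membership in $\phi(w,w')$, and that the resulting set $B$ need only be $d_\mu$-bounded (not finite) when $d_\mu$ is not assumed proper — so the first half of the statement is genuinely weaker than the converse, as the phrasing already signals. The rest is the routine ``cut measure versus sum of pairwise distances'' bookkeeping, together with the observation that properness of $d_\mu$ upgrades ``$d_\mu$-bounded'' to ``finite,'' which is what makes the fixed finite set $F_0$ available in the converse.
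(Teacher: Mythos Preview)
Your argument is correct and matches the paper's proof: the key inequality $d_\mu(v,x')\le\mu\{A\mid A\vdash F\}$ for every $v\in F=\phi(w,w')\cup\{x,x'\}$ is exactly what the paper uses to put $\phi(w,w')\cup\{x\}$ inside the $d_\mu$-ball of radius $R$ about $x'$, and your converse (using properness to make $B\cup B'$ finite, then bounding by a finite sum of $d_\mu$-distances) is the same idea, only you bound pairwise distances via the gauge inequality $\phi(w,\tilde w)\subset\phi(w,w')\cup\phi(w',\tilde w)$ while the paper bounds distances to the fixed basepoint $w'x'$. The digression about singleton half-spaces $A=\{x''\}$ is a dead end (the $\mu$-mass of a single point of $2^X$ need not relate to $d_\mu$), but you correctly abandon it for the right estimate.
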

\begin{proof}
Fix $w'x'$ and $r>0$.
If $$d_{\tilde{\mu}}(wx,w'x')=\mu\{A|A\vdash\phi(w,w')\cup\{x,x'\}\}\le r,$$
then $\mu\{A|A\vdash\{v,x'\}\}\le r$ for every $v\in\phi(w,w')\cup\{x\}$, that is, $\phi(w,w')\cup\{x\}$ is contained in the $r$-ball around $x'$ for the distance $d_\mu$.

Conversely, assume that $d_\mu$ is proper and set $E= \{w\in W|\phi(w,w')\subset B\}\times B'$, for some $d_\mu$-bounded subsets $B,B'$ of $X$ and $w'\in W$. There exist $x'\in X$ and $R>0$ such that
$$
E\subset \big\{ wx\in W\times X | \phi(w,w') \cup\{x\} \subset \overline{B}(x',R) \big\} \ .
$$
For $wx\in E$, one has thus
$$
d_{\tilde{\mu}}(wx,w'x')=\mu\{A|A\vdash\phi(w,w')\cup\{x,x'\}\}\le \mu\{A|A\vdash\overline{B}(x',R)\} \ .
$$
If $A\vdash\overline{B}(x',R)$, then clearly $A\vdash\{y,x'\}$ for some $y\in\overline{B}(x',R)$. As $d_\mu$ is proper, we have $|\overline{B}(x',R)| =:n \in \N^*$, whence
$$
 d_{\tilde{\mu}}(wx,w'x') \le \sum_{y\in\overline{B}(x',R)}\mu\{A|A\vdash\{y,x'\}\} \le
   \sum_{y\in\overline{B}(x',R)} d_\mu(y,x') \le n\cdot R \,,
$$
so that $E$ is $d_{\tilde\mu}$-bounded.
\end{proof}

\begin{rem}
 The properness assumption on $d_\mu$ cannot be dropped. Indeed, endow $X=\F_\infty$ with the walls structure induced by the edges of its Cayley tree and $W = (\Z/2\Z)^{(X)}$ with the gauge
 $\phi(w,w')= \Supp(w^{-1}w')$. One can see that
 $$
 E = \big\{ w\in W| \phi(w,0_W) \subset \overline{B}(1_X,1) \big\} \times \{1_X\} \ ,
 $$
 where $1_X$ and $0_W$ are the trivial elements in $X$ and $W$,
 is $d_{\tilde\mu}$-unbounded, while $\overline{B}(1_X,1)$ is $d_\mu$-bounded.
\end{rem}

Proposition \ref{properness} immediately implies part (iii) of Theorem \ref{cons}, whose proof is therefore completed.

However, in applications (such as wreath products with infinite base group), the gauge may be non-proper. Let $u$ be a measure definite (resp. conditionally negative definite) kernel on $W$ and extend it to $W\times X$ by $u(wx,w'x')=u(w,w')$; 
this is still measure definite (resp. conditionally negative definite). When $W,X$ are $G$-sets and $u$ is $G$-invariant (resp. $G$-uniform), then this measure definite (resp. conditionally negative definite) kernel is $G$-invariant (resp. $G$-uniform) as well.

\begin{prop} Keep the notation as in Theorem \ref{cons}.
Suppose that $d_\mu$ is proper (but maybe not $\phi$). Let $u$ be a conditionally negative definite kernel on $W$, extended as above to $W\times X$. Suppose that $u$ is proper on every subset of the form
$\{w\in W|\phi(w',w)\subset F\}$
for some finite $F$ and $w'\in W$.
Then the conditionally negative definite kernel $d_{\tilde{\mu}}+u$ is proper on $W\times X$. If $W, X$ are $G$-sets, $\phi$ is $G$-equivariant, $\mu$ and $u$ are $G$-invariant (resp. $G$-uniform), then $d_{\tilde{\mu}}+u$ is $G$-invariant (resp. $G$-uniform).
\label{propercom}
\end{prop}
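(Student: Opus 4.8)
The plan is to combine Proposition~\ref{properness} with the additional conditional-negative-definiteness hypothesis on $u$, and then glue the invariance/uniformity statements from Theorem~\ref{cons}(ii) and from the remark preceding this proposition. First, since $d_{\tilde\mu}$ and $u$ are both conditionally negative definite kernels on $W\times X$ (the former by Theorem~\ref{cons}(i) together with Example~\ref{ex:discretewalls} and Proposition~\ref{md}, since distances of measured walls structures are measure definite hence conditionally negative definite; the latter by the extension discussed just before the statement), their sum $d_{\tilde\mu}+u$ is conditionally negative definite as well: the class of conditionally negative definite kernels is closed under addition.

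The core of the argument is properness. Fix $w'x'\in W\times X$ and $R\geq 0$, and let $S=\{wx\in W\times X:\ (d_{\tilde\mu}+u)(wx,w'x')\leq R\}$. Since both $d_{\tilde\mu}$ and $u$ are non-negative, any $wx\in S$ satisfies $d_{\tilde\mu}(wx,w'x')\leq R$, so by the forward direction of Proposition~\ref{properness} the set $S$ is contained in $\{w\in W:\ \phi(w,w')\subset B\}\times B'$ for some $d_\mu$-bounded subsets $B,B'$ of $X$; and since $d_\mu$ is proper, $B$ and $B'$ are finite, say $B=F$. Now restrict attention to this slab. On the $X$-coordinate there are only finitely many possibilities (namely $B'$, which is finite), so it suffices to bound, for each fixed $x\in B'$, the set of $w\in W$ with $\phi(w,w')\subset F$ and $u(wx,w'x')=u(w,w')\leq R$. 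That is exactly a bounded subset of the set $\{w\in W:\ \phi(w,w')\subset F\}$ for the kernel $u$, which is finite by the hypothesis that $u$ is proper on every such set. Hence $S$ is finite, proving properness of $d_{\tilde\mu}+u$.

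For the equivariance part, suppose $W,X$ are $G$-sets and $\phi$ is $G$-equivariant. If $\mu$ and $u$ are $G$-invariant, then $d_{\tilde\mu}$ is $G$-invariant by Lemma~\ref{tildeequiv} and $u$ is $G$-invariant by the construction recalled before the statement, so the sum is $G$-invariant. If instead $\mu$ and $u$ are $G$-uniform, then $d_{\tilde\mu}$ is $G$-uniform by Lemma~\ref{tilduf}, $u$ extended to $W\times X$ is $G$-uniform by the same remark, and a sum of two $G$-uniform kernels is visibly $G$-uniform (the sup of a sum is at most the sum of the sups). This completes the proof.

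The only delicate point is matching the hypothesis on $u$ to the slab produced by Proposition~\ref{properness}: one must check that the bounded subsets $B$ appearing there can be taken finite (which uses properness of $d_\mu$) and that the $X$-coordinate contributes only finitely many fibers, so that the finiteness on each fiber $\{w:\phi(w,w')\subset F\}$ furnished by the properness-of-$u$ hypothesis actually yields finiteness of $S$. Everything else is a routine assembly of the already-established lemmas.
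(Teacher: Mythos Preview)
Your proof is correct and follows essentially the same approach as the paper's: both use Proposition~\ref{properness} to trap a $d_{\tilde\mu}$-bounded set inside a slab $\{w:\phi(w,w')\subset F\}\times B'$ with $F,B'$ finite (by properness of $d_\mu$), then invoke the properness hypothesis on $u$ restricted to that slab, and finally appeal to the equivariance statements of Section~\ref{Case-G-sets} for the last assertion. Your version is slightly more explicit in justifying that the sum is conditionally negative definite and in splitting over the finitely many $x\in B'$, but the argument is the same.
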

\begin{proof}
 Fix $w_0 x_0\in W\times X$ and $R>0$; we have to prove that the set
 \[
 E:=\{wx\in W\times X| d_{\tilde \mu}(w_0x_0,wx) + u(w_0,w) \leq R\}
 \]
 is finite. Elements $wx\in E$ satisfy the relations $d_{\tilde \mu}(w_0x_0,wx) \leq R$ and ${u(w_0,w) \leq R}$. As $E$ is a $d_{\tilde \mu}$-bounded set, Proposition \ref{properness} gives a $d_\mu$-bounded set $B\subset X$ and a point $w'\in W$ such that one has
 \[
 E\subset \big\{w\in W| \phi(w',w)\subset B \big\} \times B \ .
 \]
 Moreover, $B$ is finite, since $d_\mu$ is proper. Afterwards, by properness of $u$ on the subset $\{w\in W| \phi(w',w)\subset B\}$, we obtain that $E$ is finite.

 The last assertion follows directly from the above remarks and the discussion in Paragraph \ref{Case-G-sets}.
\end{proof}

\section{Applications}\label{sec:other}

\subsection{Haagerup Property}\label{sec:AppliHaagerup}
Recall that a group is Haagerup if 1 is a pointwise limit of $C_0$ positive definite functions. A group is Haagerup if and only if all its finitely generated subgroups are Haagerup \cite[Proposition 6.1.1]{CCJJV}. Moreover, a countable group is Haagerup if and only if it has a proper conditionally negative definite function, which is also equivalent to having a proper measure definite function (this follows from Proposition \ref{L2L1} and the implication (i)$\Rightarrow$(ii') of Proposition \ref{md}).

\begin{thm}\label{second} Let $W, G$ be groups, with $G$ acting on $W$ by automorphisms. Set $\A=2^{(G)}$. Let $\psi$ be a left $W$-invariant, $G$-equivariant $\A$-gauge on $W$, in the sense of Definition \ref{G-gauge}. Assume that there exists a $G$-invariant conditionally negative definite function $u$ on $W$ such that, for every finite subset $F\subset G$, the restriction of $u$ to every subset of the form $W_F:=\{w\in W:\psi(w)\subset F\}$ is proper. Then $W\rtimes G$ is Haagerup if and only if $G$ is Haagerup.
\end{thm}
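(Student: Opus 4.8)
The ``only if'' direction is immediate: $G$ embeds as a subgroup of $W\rtimes G$, and the Haagerup property passes to subgroups, so assume $G$ is Haagerup and let us prove $W\rtimes G$ is Haagerup. By the characterization recalled at the beginning of this subsection, it suffices to produce a proper conditionally negative definite function on $W\rtimes G$; equivalently, since $W\rtimes G$ is countable, a proper measure definite kernel, or by Proposition \ref{equiv} a proper, left-invariant measured walls structure on $W\rtimes G$ together with an auxiliary conditionally negative definite kernel to compensate for the possible non-properness of the gauge. Because $G$ is Haagerup, it carries a proper conditionally negative definite function, hence by Propositions \ref{L2L1} and \ref{md} a proper measure definite kernel $d_\mu$ coming from a measured walls structure $(G,\mu)$; and by averaging (or simply by the standard fact that one may take $d_\mu$ to be left-invariant on the group $G$) we may assume $(G,\mu)$ is $G$-invariant with $d_\mu$ proper.

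First I would apply Theorem \ref{cons} with $X=G$ (as a $G$-set by left translation), with the $W$-invariant $G$-equivariant $\A$-gauge $\psi$ (viewed as the two-variable gauge $\phi(w,w')=\psi(w^{-1}w')$, which satisfies $\phi(w,w)=\psi(1)=\emptyset$ after noting $\psi(1)\subset\psi(1)\cup\psi(1)$ forces nothing, but in fact $\psi(1)=\psi(1^{-1})$ and $\psi(1)=\psi(1\cdot 1)\subset\psi(1)$, so one adds the harmless normalization $\psi(1)=\emptyset$, or absorbs $\psi(1)$ into $F$ throughout), and with the measured walls structure $(G,\mu)$. Part (i) gives a measured walls structure $\tilde\mu$ on $W\times G$ with explicit distance $d_{\tilde\mu}(w_1x_1,w_2x_2)=\mu\{A\mid A\vdash\phi(w_1,w_2)\cup\{x_1,x_2\}\}$; part (ii), using that $\mu$ is $G$-invariant and $\phi$ is $W$-invariant and $G$-equivariant, gives that $(W\times G,\tilde\mu)$ is invariant under $W\rtimes G$, so $d_{\tilde\mu}$ is a left-invariant measure definite kernel on $W\rtimes G$. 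This $d_{\tilde\mu}$ need not be proper, since $\phi$ need not be proper; but its restriction to each ``slice'' is controlled by Proposition \ref{properness}: a $d_{\tilde\mu}$-bounded set lies inside $\{w\mid\phi(w,w')\subset B\}\times B'$ for $d_\mu$-bounded (hence, by properness of $d_\mu$, finite) subsets $B,B'$ of $G$ and some $w'\in W$.

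Next I would bring in the hypothesis: extend the $G$-invariant conditionally negative definite function $u$ on $W$ to $W\times G$ by $u(wx,w'x'):=u(w^{-1}w')$ — wait, more precisely set $u(wx,w'x')=u(w,w')$ where $u(w,w')$ denotes the kernel associated to the $G$-invariant conditionally negative definite \emph{function} $u$ on $W$, i.e. $u(w,w')=u(w^{-1}w')$; this extended kernel is again conditionally negative definite, and it is $W\rtimes G$-invariant because $u$ is $W$-invariant by construction and $G$-invariant by hypothesis. The hypothesis says $u$ is proper on every $W_F=\{w\mid\psi(w)\subset F\}$ with $F\subset G$ finite, which is exactly the properness-on-slices condition required in Proposition \ref{propercom}. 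Applying Proposition \ref{propercom} (with $X=G$, $d_\mu$ proper), the kernel $d_{\tilde\mu}+u$ is a conditionally negative definite kernel on $W\times G$ that is \emph{proper}, and it is $W\rtimes G$-invariant. A proper left-invariant conditionally negative definite kernel on a countable group is the same as a proper conditionally negative definite function, so $W\rtimes G$ is Haagerup.

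The main obstacle is not any single step but the bookkeeping needed to verify the hypotheses of Theorem \ref{cons} and Proposition \ref{propercom}: one must check that $\psi$ (normalized so that $\psi(1)=\emptyset$) genuinely gives a $W$-invariant, $G$-equivariant $\A$-gauge with $\phi(w,w)=\emptyset$ for the two-variable reformulation, that the extension of $u$ to $W\times G$ preserves conditional negative definiteness \emph{and} $W\rtimes G$-invariance simultaneously, and — the only point requiring a small argument — that ``$u$ proper on $W_F$ for all finite $F$'' matches the phrasing ``$u$ proper on $\{w\mid\phi(w',w)\subset F\}$ for all finite $F$ and $w'\in W$'' in Proposition \ref{propercom}; by $W$-invariance of $\psi$ (i.e. $\phi(w',w)=\psi(w'^{-1}w)$), the set $\{w\mid\phi(w',w)\subset F\}=w'W_F$ is a translate of $W_F$, and properness is translation-invariant, so the two formulations agree. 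Everything else is a direct invocation of the already-established Theorem \ref{cons} and Proposition \ref{propercom} together with the standard dictionary between the Haagerup property, proper conditionally negative definite functions, and proper measure definite kernels on countable groups.
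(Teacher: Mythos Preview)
Your proof is correct and follows essentially the same route as the paper: lift a $G$-invariant measured walls structure on $G$ to $W\rtimes G$ via Theorem \ref{cons}, extend $u$ to $W\rtimes G$, and apply Proposition \ref{propercom} to obtain a proper $(W\rtimes G)$-invariant conditionally negative definite kernel. One clarification: to obtain a $G$-\emph{invariant} measured walls structure $(G,\mu)$ (not merely a left-invariant $d_\mu$), you should invoke the last clause of Proposition \ref{equiv} --- a left-invariant $L^2$-embeddable kernel yields a $G$-invariant measured walls structure --- rather than ``averaging'', which is only available for amenable $G$ and is precisely the subtle point flagged in the remark after Proposition \ref{equiv}.
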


\begin{rem} The assumption on $u$ in Theorem \ref{second}, implies that $W$ is Haagerup. To see it, we prove that every finitely generated subgroup of $W$ is Haagerup. Observe that, for $F$ a finite subset of $G$, the set $W_F$ is a subgroup of $W$. If $w_1,...,w_n$ is a finite collection of elements in $W$, take $F=\bigcup_{i=1}^n \psi(w_i)$, so that the subgroup $\langle w_1,...,w_n\rangle$ generated by the $w_i$'s is contained in $W_F$. The assumption on $u$ then implies that $\langle w_1,...,w_n\rangle$ is Haagerup. By the remarks preceding Theorem \ref{second}, the group $W$ is itself Haagerup.
\end{rem}
\begin{proof}[Proof of Theorem \ref{second}] In view of the remarks preceding Theorem \ref{second}, we may assume that $W$ and $G$ are countable. We can suppose that $\psi(1)=\emptyset$ (otherwise we just change $\psi(1)$ by $\emptyset$, without altering the other hypotheses).
Since the Haagerup property is inherited by subgroups, one implication is obvious. To prove the non-trivial implication, we first extend $u$ to $W\rtimes G$ by setting $u(wg):=u(w)$. As $u$ is conditionally negative definite and $G$-invariant on $W$, the extension is conditionally negative definite on $W\rtimes G$. On the other hand, as $G$ is Haagerup, there exists a proper, $G$-invariant, $L^2$-embeddable kernel $\kappa$ on $G$, so by Proposition \ref{equiv} there exists a $G$-invariant measured walls structure $(G,\mu)$ such that $\kappa=d_\mu$.
Then we consider the function $wg\mapsto d_{\tilde{\mu}}(1,wg)+ u(wg)$: by Theorem \ref{cons} and Proposition \ref{propercom}, this is a proper conditionally negative definite function on $W\rtimes G$, so that $W\rtimes G$ is Haagerup.
\end{proof}

\begin{proof}[Proof of Theorem \ref{main}] Let $H,G$ be countable Haagerup groups. Set $W=H^{(G)}$, so that $H\wr G=W\rtimes G$. The $W$-invariant gauge $\psi(w)=\Supp(w)$ is $G$-equivariant; let $u$ be a proper, conditionally negative definite function on $H$; extend $u$ to $W$ by $u(w)=\sum_{g\in G}u(w_g)$: the extended function is conditionally negative definite, and its  restriction to $H^S$ is proper, for every finite subset $S\subset G$. So Theorem~\ref{second} applies.
\end{proof}

\begin{cor}\label{locallyfinite}  Let $W, G$ be groups, with $G$ acting on $W$ by automorphisms. Set $\A=2^{(G)}$. Let $\psi$ be a left $W$-invariant, $G$-equivariant $\A$-gauge on $W$, in the sense of Definition \ref{G-gauge}. Assume that there exists a $G$-invariant conditionally negative definite function $u$ on $W$ such that, for every finite subset $F\subset G$, the subgroup $W_F$ is finite. Then $W\rtimes G$ is Haagerup if and only if $G$ is Haagerup.
\end{cor}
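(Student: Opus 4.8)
The plan is to deduce this immediately from Theorem \ref{second}: the corollary is exactly the special case in which the properness hypothesis on $u$ becomes automatic because the relevant subgroups are finite.

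First I would record the elementary observation that finiteness of $W_F$ forces properness, on $W_F$, of \emph{every} $G$-invariant conditionally negative definite function $u$ on $W$. Recall that such a $u$ gives rise to the symmetric non-negative kernel $(w,w')\mapsto u(w^{-1}w')$, and that ``$u$ is proper on $W_F$'' means that for every $w\in W_F$ and every $R\ge 0$ the set $\{w'\in W_F : u(w^{-1}w')\le R\}$ is finite. Since $W_F$ itself is finite, this set is finite for trivial reasons, whatever $u$ is; in particular one may simply take $u\equiv 0$, which is $G$-invariant and conditionally negative definite. (Such a $u$ is anyway part of the hypothesis, but it plays no real role.)

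Consequently the hypotheses of Theorem \ref{second} hold verbatim: $\psi$ is a left $W$-invariant, $G$-equivariant $\A$-gauge on $W$, and the chosen $u$ is a $G$-invariant conditionally negative definite function on $W$ whose restriction to each $W_F$ is proper. Applying Theorem \ref{second} then yields that $W\rtimes G$ is Haagerup if and only if $G$ is Haagerup, which is the assertion.

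I do not expect any genuine obstacle; the only point to make is the vacuous remark that ``proper on a finite set'' imposes no condition. As a consistency check one may also note that the hypothesis makes $W$ locally finite — every finitely generated subgroup $\langle w_1,\dots,w_n\rangle$ lies in $W_F$ for $F=\bigcup_{i=1}^n\psi(w_i)$, hence is finite — so $W$ is in particular amenable and Haagerup, in accordance with the remark following Theorem \ref{second}.
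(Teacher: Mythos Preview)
Your proposal is correct and follows exactly the paper's approach: observe that $u\equiv 0$ (or indeed any $u$) is trivially proper on each finite $W_F$, and then invoke Theorem~\ref{second}. The paper's proof is the one-line version of what you wrote; your added remark on local finiteness merely echoes the Remark following Theorem~\ref{second}.
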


\begin{proof} Since $W_F$ is finite, the function $u\equiv 0$ satisfies the assumption in Theorem \ref{second}, which therefore applies to give the result.
\end{proof}

\begin{ex} Let $X=G$ be a countable group. Define the group $W$ either as in Example \ref{finitary} or as in (vi) (locally finite variety) or (vii) (restricted Burnside groups) of Example \ref{regular}: in each case, the subgroups $W_F$ are finite (for $F\in 2^{(G)}$), so that $W\rtimes G$ is Haagerup if and only if $G$ is Haagerup, by Corollary \ref{locallyfinite}.
\end{ex}

\begin{ex} We elaborate on (iii) of Example \ref{regular}. Let $G$ be a countable group, endowed with a left invariant structure of Coxeter graph $X$. Let $W_X$ be the Coxeter group  associated with $X$, so that $G$ acts by automorphisms on $W_X$. Let $u$ be the word length on $W_X$ with respect to the set of Coxeter generators. Then $u$ is conditionally negative definite \cite{BJS} and, for $F\in 2^{(G)}$, the restriction $u|_{W_F}$ is proper, as it corresponds to word length on a finitely generated group. So Theorem \ref{second} applies: $W\rtimes G$ is Haagerup if and only if $G$ is.
\end{ex}

\subsection{Coarse embeddability}
We freely use basic terminology about coarse geometry, for which we refer to \cite{R}. Pseudometric spaces are endowed with their bounded coarse structure and groups with their left `canonical' coarse structure. In other words:
\begin{enumerate}
 \item if $(X,d)$ is a pseudometric space, a subset $E$ of $X\times X$ is controlled if and only if $\sup\{d(x,y)|(x,y)\in E\}<+\infty$;
 \item if $G$ is a group, a subset $E$ of $G\times G$ is controlled if and only if the subset $\{g^{-1}h|(g,h)\in E\}$ is finite.
\end{enumerate}
In case a group $G$ is endowed with a left-invariant proper metric $d$, the coarse structures arising from (1) and (2) coincide.
\begin{defn}
 Let $\kappa$ be a symmetric, non-negative, kernel on a group $G$. It is \emph{effectively proper} if, for all $R>0$, the subset $\{(x,y)\in X\times X: \kappa(x,y) \leq R\}$ is controlled.
\end{defn}
There is a strong link with coarse embeddability into Hilbert spaces.
\begin{prop}\label{kernelHilbert}
 Let $G$ be a group. The following are equivalent:
 \begin{enumerate}
 \item[(i)] $G$ coarsely embeds into a Hilbert space;
 \item[(ii)] there exists a conditionally negative definite, $G$-uniform, and effectively proper kernel on $G$.
\end{enumerate}
\end{prop}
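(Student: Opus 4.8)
The plan is to prove Proposition~\ref{kernelHilbert} by the standard dictionary between coarse embeddings into Hilbert space and conditionally negative definite kernels, carried out in the \emph{uniform} category so as to keep track of the group action. The key classical fact is that a map $f\colon G\to\Hc$ into a Hilbert space satisfies $\rho_-(d(x,y))\le\|f(x)-f(y)\|\le\rho_+(d(x,y))$ for some nondecreasing $\rho_\pm$ with $\rho_-\to\infty$ if and only if $\|f(x)-f(y)\|^2$ is a conditionally negative definite kernel that is effectively proper in the sense defined above; the point is to upgrade this to uniformity.

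First I would prove (i)$\Rightarrow$(ii). Suppose $f\colon G\to\Hc$ is a coarse embedding. One cannot expect $f$ itself to have any equivariance, so the first step is to \emph{symmetrize}: consider the kernel $\kappa(x,y)=\|f(x)-f(y)\|^2$, which is conditionally negative definite and effectively proper, and replace it by $\bar\kappa(x,y)=\sup_{g\in G}\kappa(gx,gy)$. The issue is that this supremum may be infinite, so instead I would work more carefully: since $\{(x,y):d_G(x^{-1}y)\text{ finite-supported}\}$ is the controlled structure, and $\kappa$ is a coarse embedding, for each fixed $s=x^{-1}y$ the quantity $\kappa(x,y)$ is bounded above by $\rho_+(|s|)$ for a fixed proper length $|\cdot|$ on $G$; hence $\bar\kappa(x,y)=\sup_g\kappa(gx,gy)<\infty$ for all $x,y$, it is $G$-invariant (hence $G$-uniform), it dominates $\kappa$ so it is still effectively proper, and it is conditionally negative definite as a pointwise supremum need not be --- so here I would instead average against an invariant mean or, better, use that the set of c.n.d.\ kernels bounded by a fixed proper kernel forms a weak-$*$ compact convex set on which $G$ acts, and is nonempty, so it has... no: $G$ need not be amenable. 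The correct route, which I would follow, is the one used in \cite{CTV}: build from $\kappa$ an affine action of $G$ on a Hilbert space whose orbit map is a coarse embedding by passing to the associated real hyperbolic-type construction, or more simply observe that the Gaussian construction applied to $\bar\kappa$ does not need $\bar\kappa$ c.n.d.\ --- instead one takes the c.n.d.\ kernel $\kappa$, forms the affine isometric action of $G$ it defines after choosing a cocycle, replaces the orbit by the $G$-translates, and the resulting kernel $g\mapsto\kappa'$ is automatically $G$-uniform because the linear part is a unitary representation. Concretely: a c.n.d.\ kernel $\kappa$ on $G$ yields a Hilbert space $\Hc_\kappa$, a representation $\pi$, and a map $b\colon G\times G\to\Hc_\kappa$ with $\|b(x,y)\|^2=\kappa(x,y)$ and $b(gx,gy)=\pi(g)b(x,y)$ when $\kappa$ is $G$-invariant; so the real work is reduced to producing a $G$-invariant effectively proper c.n.d.\ kernel, which is exactly the compactness-free part of the argument above applied with $\bar\kappa$, using that a pointwise \emph{supremum of the translates of a single} c.n.d.\ kernel, while not c.n.d., can be replaced by $\sum_{s}2^{-n(s)}\kappa(x,sx)\cdot(\text{indicator})$...

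Let me restructure the plan more honestly. The cleanest approach: (i)$\Rightarrow$(ii): given a coarse embedding $f\colon G\to\Hc$, the kernel $\kappa_0(x,y)=\|f(x)-f(y)\|^2$ is c.n.d.\ and effectively proper but not uniform. Fix a proper left-invariant length $\ell$ on $G$ (exists since $G$ is countable). For each $n$, the set $S_n=\{s\in G:\ell(s)\le n\}$ is finite; define $c_n=1+\sup\{\kappa_0(x,sx):x\in G,\ s\in S_n\}$, which is finite because $\kappa_0$ is a coarse embedding (bounded on each controlled set, and $\{(x,sx):s\in S_n\}$ is controlled). Then $\kappa(x,y):=\sum_{n\ge1}2^{-n}c_n^{-1}\,\kappa_0(x,y)\cdot\mathbf 1_{[\ell(x^{-1}y)>n]}$ is a sum of c.n.d.\ kernels? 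No --- the indicator spoils it. So instead I would not try to make $\kappa_0$ uniform directly; I would invoke that effective properness plus a coarse embedding gives, via \cite{CTV} or \cite[Prop.~3.1]{CTV}-type arguments, a \emph{proper} affine isometric action's analog, namely: the existence of a coarse embedding into Hilbert space is equivalent to the existence of a c.n.d.\ kernel $\kappa$ with $\rho_-(\ell(x^{-1}y))\le\kappa(x,y)\le\rho_+(\ell(x^{-1}y))$ for controls $\rho_\pm$; and from such a kernel the $G$-invariant one is obtained by the argument that the linear part of the associated affine action is unitary. I expect the \textbf{main obstacle} to be precisely this passage from a coarse embedding to a $G$-\emph{uniform} c.n.d.\ kernel without amenability --- this is the one genuinely nontrivial point, and it is handled exactly as in the proof that the Haagerup property is equivalent to the existence of a proper c.n.d.\ function (where properness, a one-point condition, suffices and uniformity is automatic from the affine-action picture). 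For (ii)$\Rightarrow$(i): given a c.n.d., $G$-uniform, effectively proper $\kappa$, take any Hilbert space realization $\|f(x)-f(y)\|^2=\kappa(x,y)$ (e.g.\ the GNS-type construction, no equivariance needed); effective properness says $\kappa(x,y)\to\infty$ as $\ell(x^{-1}y)\to\infty$, giving the lower control $\rho_-$, and $G$-uniformity gives $\kappa(x,y)=\kappa(x\cdot1,x\cdot x^{-1}y)\le\sup_{g}\kappa(g,g x^{-1}y)=:\rho_+(x^{-1}y)$ depending only on $x^{-1}y$; since $\kappa$ is a pseudometric-squared its coarse type dominates a metric one via $\ell$, so $\rho_+$ can be taken nondecreasing in $\ell$. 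Hence $f$ is a coarse embedding, which is the short and routine direction.
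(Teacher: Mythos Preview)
Your $(\mathrm{ii})\Rightarrow(\mathrm{i})$ direction is fine and matches the paper. The problem is entirely in $(\mathrm{i})\Rightarrow(\mathrm{ii})$: you have misread ``$G$-uniform'' as ``$G$-invariant'' and thereby manufactured a difficulty that is not there. Recall the definition in the paper: a kernel $\kappa$ on a $G$-set is \emph{uniform} if for every $h_1,h_2$ the map $g\mapsto\kappa(gh_1,gh_2)$ is \emph{bounded}; there is no requirement that it be constant.

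With that in mind, the kernel $\kappa_0(x,y)=\|f(x)-f(y)\|^2$ coming straight from a coarse embedding $f$ already satisfies all three conditions in (ii). It is conditionally negative definite for free; effective properness is the lower control of $f$; and $G$-uniformity is exactly the upper control of $f$: for fixed $h_1,h_2$, the set $\{(gh_1,gh_2):g\in G\}$ is controlled (since $(gh_1)^{-1}(gh_2)=h_1^{-1}h_2$ is a single element), so $\sup_g\|f(gh_1)-f(gh_2)\|<\infty$. You actually compute this bound yourself when you argue that $\bar\kappa<\infty$, but then fail to notice that this \emph{is} the uniformity condition and keep trying to upgrade to invariance --- hence the detours through suprema, amenability averages, weighted sums with indicators, and the \cite{CTV} machinery, none of which is needed. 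The paper's proof is accordingly one line in each direction: take $\kappa(x,y)=\|f(x)-f(y)\|^2$ for $(\mathrm{i})\Rightarrow(\mathrm{ii})$, and apply GNS for $(\mathrm{ii})\Rightarrow(\mathrm{i})$.
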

This proposition is interesting for countable groups only: it is easy to see
that any group satisfying (i) or (ii) is countable.
\begin{proof}
If $f$ is a coarse embedding of $G$ into a Hilbert space, then the kernel $\kappa(x,y) = \|f(x)-f(y)\|^2$ satisfies (ii). Conversely, if $\kappa$ is a kernel as in (ii), the GNS-construction furnishes a coarse embedding of $G$ into a Hilbert space. Details are left to the reader.
\end{proof}

We now adapt ideas of Sections \ref{sec:properness} and \ref{sec:AppliHaagerup} to prove Theorem \ref{coarse}.

\begin{prop}\label{controlled}
Keep the notation as in Theorem \ref{cons} (i). Then, the set
\[
 \bigcup_{(wx,w'x')\in E}\left( \phi(w,w') \cup \{x,x'\} \right)^2 \subset X\times X
\]
is $d_\mu$-controlled, whenever $E$ is a $d_{\tilde{\mu}}$-controlled subset of $(W\times X)^2$.
\end{prop}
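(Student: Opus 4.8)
The plan is to reduce the control of the set of pairs in $X \times X$ to the control of $E$ via the explicit distance formula $d_{\tilde\mu}(w_1x_1,w_2x_2) = \mu\{A \mid A \vdash \phi(w_1,w_2)\cup\{x_1,x_2\}\}$ established in Theorem \ref{cons}(i). Since $E$ is $d_{\tilde\mu}$-controlled, there is an $R > 0$ with $d_{\tilde\mu}(wx, w'x') \le R$ for all $(wx,w'x') \in E$. The key observation, which is exactly the one already exploited in the proof of Proposition \ref{properness}, is that if $A \vdash \{v,v'\}$ for some two elements $v, v'$ of the finite set $F := \phi(w,w')\cup\{x,x'\}$, then $A \vdash F$; hence $\mu\{A \mid A \vdash \{v,v'\}\} \le \mu\{A \mid A \vdash F\} = d_{\tilde\mu}(wx,w'x') \le R$. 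But $\mu\{A \mid A \vdash \{v,v'\}\} = d_\mu(v,v')$ by definition of $d_\mu$ (a wall separates $v$ from $v'$ iff $A$ cuts $\{v,v'\}$). So every pair $(v,v')$ with $v,v' \in \phi(w,w')\cup\{x,x'\}$ satisfies $d_\mu(v,v') \le R$.

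First I would fix such an $R$ from the hypothesis that $E$ is $d_{\tilde\mu}$-controlled. Then, for an arbitrary element $(v,v')$ of the set $\bigcup_{(wx,w'x')\in E}(\phi(w,w')\cup\{x,x'\})^2$, I would pick a witnessing pair $(wx,w'x') \in E$ with $v,v' \in \phi(w,w')\cup\{x,x'\}$ and run the argument of the previous paragraph to conclude $d_\mu(v,v') \le R$. This shows $\sup\{d_\mu(v,v') \mid (v,v') \text{ in the union}\} \le R < \infty$, which is precisely the statement that the union is $d_\mu$-controlled in the bounded coarse structure of $(X, d_\mu)$.

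The only point requiring a moment's care — and the closest thing to an obstacle — is the monotonicity step: that cutting a subset of $F$ implies cutting $F$. Concretely, if $v \in A$ and $v' \notin A$ with $v,v' \in F$, then $F \not\subset A$ (because $v' \notin A$) and $F \not\subset A^c$ (because $v \in A$), so $A \vdash F$; this is immediate from the definition of $\vdash$. Everything else is bookkeeping with the formula from Theorem \ref{cons}(i) and the definition of $d_\mu$ as $\mu(\Ac_v \bigtriangleup \Ac_{v'})$, noting that $\Ac_v \bigtriangleup \Ac_{v'} = \{A \mid A \vdash \{v,v'\}\}$. I expect the write-up to be short.
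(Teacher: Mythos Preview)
Your proof is correct and follows essentially the same approach as the paper's: fix $R$ bounding $d_{\tilde\mu}$ on $E$, then use the monotonicity $\{A\mid A\vdash\{v,v'\}\}\subset\{A\mid A\vdash F\}$ for $v,v'\in F=\phi(w,w')\cup\{x,x'\}$ to get $d_\mu(v,v')\le d_{\tilde\mu}(wx,w'x')\le R$. The paper's write-up is simply the one-line version of your argument, taking the monotonicity step for granted.
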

\begin{proof}
Take a positive number $R$ such that $d_{\tilde \mu}(wx,w'x')\leq R$ for all $(wx,w'x')\in E$. Then, for all $u,v \in X$ and $(wx,w'x')\in E$ such that $u,v \in \phi(w,w') \cup \{x,x'\}$, we have
\[
 d_\mu(u,v) \le \mu\{A|A\vdash\phi(w,w')\cup\{x,x'\}\} = d_{\tilde\mu}(wx,w'x')\le R \ . \qedhere
\]
\end{proof}

\begin{prop}\label{propercontrolled}
Let $G$ be a group and keep the notation as in Theorem \ref{cons} (i), with $X=G$. Let $u$ be a conditionally negative definite kernel on $W$, extended to $W\times G$ by $u(wx,w'x')=u(w,w')$. Suppose that $W$ is a $G$-group, $d_\mu$ is effectively proper, and
\begin{enumerate}
 \item[(H)] for all controlled subsets $C\subset G\times G$ and for all $R>0$, the subset
 \[
  \big\{(vx,wy)\in (W\rtimes G)^2 \, | \, (\phi(v,w)\cup\{x,y\})^2 \subset C \text{ and } u(v,w)\le R \big\}
 \]
 is controlled.
\end{enumerate}
Then, $d_ {\tilde \mu} + u$ is an effectively proper kernel on $W\rtimes G$.
\end{prop}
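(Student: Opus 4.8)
The statement to prove is that $\kappa:=d_{\tilde\mu}+u$ is effectively proper on $W\rtimes G$, i.e.\ that for each $R>0$ the sublevel set
$$E:=\bigl\{(vx,wy)\in(W\rtimes G)^2 \mid d_{\tilde\mu}(vx,wy)+u(v,w)\le R\bigr\}$$
is controlled for the canonical coarse structure of $W\rtimes G$. I would argue exactly as in the properness proof of Proposition \ref{propercom}, but replacing ``finite'' by ``controlled'' throughout. First, since $d_{\tilde\mu}$ and $u$ are both non-negative, every $(vx,wy)\in E$ satisfies simultaneously $d_{\tilde\mu}(vx,wy)\le R$ and $u(v,w)\le R$; in particular $E$ is a $d_{\tilde\mu}$-controlled subset of $(W\times G)^2$.

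Next I would apply Proposition \ref{controlled} (with $X=G$) to this $d_{\tilde\mu}$-controlled set $E$: it gives that
$$C:=\bigcup_{(vx,wy)\in E}\bigl(\phi(v,w)\cup\{x,y\}\bigr)^2\subset G\times G$$
is $d_\mu$-controlled, meaning $\sup\{d_\mu(a,b)\mid(a,b)\in C\}<\infty$. Here is the one place where the effective-properness hypothesis on $d_\mu$ is genuinely used (rather than mere properness as in Proposition \ref{propercom}): being $d_\mu$-bounded, $C$ is contained in some sublevel set $\{(a,b)\mid d_\mu(a,b)\le R'\}$, which is controlled in $G$ by effective properness of $d_\mu$; hence $C$ is a controlled subset of $G\times G$.

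Now I invoke hypothesis (H) with this controlled set $C$ and the given $R$. By construction of $C$, every $(vx,wy)\in E$ satisfies $(\phi(v,w)\cup\{x,y\})^2\subset C$, and we have already noted $u(v,w)\le R$; therefore $E$ is contained in the set
$$\bigl\{(vx,wy)\in(W\rtimes G)^2 \mid (\phi(v,w)\cup\{x,y\})^2\subset C \text{ and } u(v,w)\le R\bigr\},$$
which (H) asserts is controlled. Since a subset of a controlled set is controlled, $E$ is controlled, and as $R>0$ was arbitrary, $\kappa$ is effectively proper. That $\kappa$ is moreover conditionally negative definite is immediate, as before: $d_{\tilde\mu}$ is measure definite (map $wx\mapsto\mathcal A_{wx}$), hence conditionally negative definite by Proposition \ref{md}, and $u$ is conditionally negative definite by assumption, so the sum is too.

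\textbf{Main obstacle.} I do not anticipate a real difficulty: the whole argument is the coarse-geometry transcription of Proposition \ref{propercom}, and the only delicate point is the step ``$d_\mu$-bounded in $G\times G$ $\Rightarrow$ controlled in $G\times G$'', which is precisely why the hypothesis is effective properness of $d_\mu$ and not just properness; all the remaining steps are formal manipulations with sublevel sets and the definitions of controlled sets.
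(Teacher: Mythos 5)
Your proof is correct and follows essentially the same route as the paper: bound both summands by $R$ on the sublevel set $E$, apply Proposition \ref{controlled} to get a $d_\mu$-controlled set, upgrade it to a controlled subset of $G\times G$ via effective properness of $d_\mu$, and conclude by hypothesis (H). The closing remark on conditional negative definiteness is extraneous to the statement but harmless.
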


\begin{proof}
 Fix $R>0$; we have to prove that the set
 \[
 E:=\{(vx,wy)\in (W\rtimes G)^2 \, | \,  d_{\tilde \mu}(vx,wy) + u(v,w) \leq R\}
 \]
 is  controlled. As $E$ is $d_{\tilde\mu}$-controlled, Proposition \ref{controlled} tells us that the subset
 \[
  D:=\bigcup_{(wx,w'x')\in E}\left( \phi(w,w') \cup \{x,x'\} \right)^2
 \]
 is $d_\mu$-controlled. As $d_\mu$ is  effectively proper, $D$ is also controlled with respect to the group coarse structure. Finally, $E$ is contained in
 \[
  \big\{(vx,wy)\in (W\rtimes G)^2 \, | \, (\phi(v,w)\cup\{x,y\})^2 \subset D \text{ and } u(v,w)\le R \big\} \ ,
 \]
 so that it is a controlled subset of $(W\rtimes G)^2$ by (H).
\end{proof}

Here is a coarse analog of Theorem \ref{second}.

\begin{thm}\label{coarsegen}
 Let $W, G$ be groups, with $G$ acting on $W$ by automorphisms. Set $\A=2^{(G)}$. Let $\psi$ be a left $W$-invariant, $G$-equivariant $\A$-gauge on $W$, in the sense of Definition \ref{G-gauge}. Assume that there exists a $W$-uniform, $G$-invariant, conditionally negative definite kernel $u$ on $W$ such that, for every finite subset $F\subset G$ and every $R>0$, the set
 \[
  \{(v,w)\in W\times W| \ \psi(v^{-1}w) \subset F \text{ and } u(v,w)\le R\}
 \]
 is controlled.
 Then $W\rtimes G$ coarsely embeds into a Hilbert space if and only if $G$ does.
\end{thm}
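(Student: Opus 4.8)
The plan is to run the argument of Theorem \ref{second} with ``proper'' replaced by ``effectively proper'', ``$G$-invariant'' by ``$G$-uniform'', and ``Haagerup'' by ``coarse embeddability into a Hilbert space'', using Propositions \ref{kernelHilbert} and \ref{propercontrolled} in place of the Haagerup-type statements. First I would reduce to the case where $W$ and $G$ are countable: if $W\rtimes G$ coarsely embeds into a Hilbert space it is countable (remark after Proposition \ref{kernelHilbert}), hence so are $W$ and $G$; and if $G$ coarsely embeds then $G$ is countable, so, intersecting the set in the hypothesis with $\{1\}\times W$, the set $\{w\in W:\psi(w)\subset F,\ u(1,w)\le R\}$ is finite for every finite $F\subset G$ and every $R$, whence each $W_F:=\{w\in W:\psi(w)\subset F\}$, and then $W=\bigcup_F W_F$, is countable. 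The implication ``$W\rtimes G$ coarsely embeds $\Rightarrow$ $G$ coarsely embeds'' is then immediate, since $G$ is a subgroup of $W\rtimes G$ and the restriction to $G$ of a conditionally negative definite, $(W\rtimes G)$-uniform, effectively proper kernel as in Proposition \ref{kernelHilbert}(ii) is a kernel of the same kind on $G$.

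For the converse, suppose $G$ coarsely embeds into a Hilbert space. By Proposition \ref{kernelHilbert} there is a conditionally negative definite, $G$-uniform, effectively proper kernel $\kappa$ on $G$; then $\sqrt\kappa$ is $L^2$-embeddable, still $G$-uniform and effectively proper, hence $L^1$-embeddable by Proposition \ref{L2L1}, hence $\sqrt\kappa=d_\mu$ for some measured walls structure $(G,\mu)$ by Proposition \ref{md}. Because $d_\mu=\sqrt\kappa$ is a $G$-uniform kernel (for the left translation action), $(G,\mu)$ is a $G$-uniform measured walls structure and $d_\mu$ is effectively proper. Let $\phi(w,w')=\psi(w^{-1}w')$ be the two-variable gauge attached to $\psi$; after replacing $\psi(w)$ by $\psi(w)\setminus\psi(1)$, which affects none of the hypotheses, we may assume $\psi(1)=\emptyset$, so $\phi(w,w)=\emptyset$. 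Then $\phi$ is $W$-invariant, and $G$-equivariant because $\psi$ is. Extend $u$ to $W\times G=W\rtimes G$ by $u(wx,w'x')=u(w,w')$: being the pull-back of $u$ along $(w,x)\mapsto w$ it is conditionally negative definite, and it is $(W\rtimes G)$-uniform because $u$ is $W$-uniform and $G$-invariant. Applying Theorem \ref{cons} with $X=G$, the measure $\tilde\mu$ makes $(W\times G,\tilde\mu)$ a $(W\rtimes G)$-uniform measured walls structure with $d_{\tilde\mu}(w_1x_1,w_2x_2)=\mu\{A\mid A\vdash\phi(w_1,w_2)\cup\{x_1,x_2\}\}$.

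The remaining step, and the main obstacle, is to verify hypothesis (H) of Proposition \ref{propercontrolled} (the other hypotheses, ``$W$ a $G$-group'' and ``$d_\mu$ effectively proper'', hold). Fix a controlled $C\subset G\times G$, so $S:=\{x^{-1}y:(x,y)\in C\}$ is finite, and fix $R>0$; let $(vx,wy)$ satisfy $(\phi(v,w)\cup\{x,y\})^2\subset C$ and $u(v,w)\le R$. Since $x\in\phi(v,w)\cup\{x,y\}$, the first condition forces $\phi(v,w)\cup\{x,y\}\subset xS$, so $x^{-1}y\in S$ and, by $G$-equivariance of $\psi$, $\psi\big(x^{-1}\cdot(v^{-1}w)\big)=x^{-1}\psi(v^{-1}w)\subset S$. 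Setting $v'=x^{-1}\cdot v$ and $w'=x^{-1}\cdot w$ (the action of $G$ on $W$), one has $(v')^{-1}w'=x^{-1}\cdot(v^{-1}w)$, hence $\psi\big((v')^{-1}w'\big)\subset S$, while $u(v',w')=u(v,w)\le R$ by $G$-invariance of $u$. Therefore $(v')^{-1}w'$ lies in $\{a^{-1}b:\psi(a^{-1}b)\subset S,\ u(a,b)\le R\}$, which is finite by the hypothesis of the theorem applied to $F=S$; since $(vx)^{-1}(wy)=\big((v')^{-1}w',\,x^{-1}y\big)$, this element ranges over a finite subset of $W\rtimes G$, so the set in (H) is controlled. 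Proposition \ref{propercontrolled} then shows $d_{\tilde\mu}+u$ is an effectively proper kernel on $W\rtimes G$; it is moreover conditionally negative definite (the pseudo-distance of a measured walls structure is, and so is the extension of $u$) and $(W\rtimes G)$-uniform, so Proposition \ref{kernelHilbert} gives that $W\rtimes G$ coarsely embeds into a Hilbert space. The delicate point in (H) is precisely that translating the pair $(v,w)$ by $x^{-1}$ converts the $x$-dependent containment $\phi(v,w)\cup\{x\}\subset xS$ into the \emph{fixed} finite-support condition $\psi((v')^{-1}w')\subset S$, to which the hypothesis applies directly; everything else parallels Section \ref{sec:AppliHaagerup}.
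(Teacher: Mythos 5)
Your proposal is correct and follows essentially the same route as the paper: produce a $G$-uniform, effectively proper measured walls structure on $G$ via Propositions \ref{kernelHilbert} and \ref{md}, lift it by Theorem \ref{cons}, extend $u$, verify hypothesis (H) of Proposition \ref{propercontrolled} by the same translation-by-$x^{-1}$ trick (the paper phrases it as enlarging $C$ to a diagonally invariant controlled set), and conclude with Proposition \ref{kernelHilbert}. Your extra steps (countability reduction, passing to $\sqrt\kappa$, arranging $\psi(1)=\emptyset$) only make explicit points the paper leaves implicit.
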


\begin{proof}
 The ``only if'' part is obvious. Assume that $G$ coarsely embeds into a Hilbert space.
 By Proposition \ref{kernelHilbert}, there exists a conditionally negative definite kernel $\kappa$ on $G$ which is $G$-uniform and effectively proper. By Proposition \ref{md}, there exists a measured walls structure $(G,\mu)$ such that $\kappa=d_\mu$. We extend $u$ to $W\rtimes G$ by $u(vg,wh) = u(v,w)$. This kernel is still conditionally negative definite, and is $(W\rtimes G)$-uniform.

 By Theorem \ref{cons} (ii), the kernel $d_{\tilde \mu} + u$ on $W\rtimes G$ is $(W\rtimes G)$-uniform; it is also conditionally negative definite by construction. Now, by Proposition \ref{kernelHilbert}, it is sufficient to prove that $d_{\tilde \mu} + u$ is effectively proper.

 In order to apply Proposition \ref{propercontrolled}, let us check hypothesis (H). Let $C\subset G\times G$ be a controlled subset and let $R>0$. Up to enlarging $C$, we may assume that $C$ is symmetric and invariant under the diagonal action. We are to show that
 \[
  E:=\{(vg,wh)\in (W\rtimes G)^2 | \, (\psi(v^{-1}w)\cup \{g,h\})^2 \subset C \text{ and } u(v,w)\le R\}
 \]
 is a controlled subset. As $C$ is controlled, the set $F':=\{x^{-1}y| \, (x,y)\in C\}$ is finite. Suppose $(vg,wh)\in E$. We get
 \[
 \big(\psi(g^{-1}v^{-1}wg)\cup \{1,g^{-1}h\}\big)^2 \subset C
 \]
 by $G$-invariance of $C$, so that $g^{-1}h\in F'$ and $\psi(g^{-1}v^{-1}wg)\subset F'$. Now, by hypothesis, the set
 $
  \{(s,t)\in W\times W| \psi(s^{-1}t) \subset F' \text{ and } u(s,t)\le R\}
 $
 is controlled, so that
 \[
   F'':=\{s^{-1}t|\, s,t\in W, \,\psi(s^{-1}t) \subset F' \text{ and } u(s,t)\le R\}
 \]
 is finite. Consequently, we obtain
 $
  (vg)^{-1}wh = (g^{-1}v^{-1}wg)g^{-1}h \in F''F' \ .
 $
 The last set being finite, we have proved that $E$ is controlled. Finally, by Proposition \ref{propercontrolled}, the kernel $d_{\tilde\mu} + u$ is effectively proper, as desired.
\end{proof}

\begin{proof}[Proof of Theorem \ref{coarse}]
 By Proposition \ref{kernelHilbert}, there exists a conditionally negative definite, $H$-uniform, and effectively proper kernel $u$ on $H$. We extend $u$ to $W:=H^{(G)}$ by $u(v,w) = \sum_{g\in G}u(v_g,w_g)$. It is easy to check that this kernel on $W$ is conditionally negative definite, $G$-invariant, and $(W\rtimes G)$-uniform.

 Whenever $F$ is a finite subset of $G$ and $R$ is a positive number, the set
 \[
  E:= \{(v,w)\in W\times W| \Supp(v^{-1}w) \subset F \text{ and } u(v,w)\le R\}
 \]
 is controlled. Indeed, if $(v,w)\in E$, we have $v_x=w_x$ for $x\in G\setminus F$ and ${u(v_x,w_x)\le R}$ for $x\in F$. As $u$ is effectively proper on $H$, the set $F':=\{a^{-1}b| \, a,b\in H,\, u(a,b)\le R\}$ is finite. Hence, we get $v_x^{-1}w_x \in F'$ for all $x\in F$, so that $\{v^{-1}w|(v,w)\in E\}$ is finite.

 Finally, Theorem \ref{coarsegen} implies that $H\wr G = W\rtimes G$ coarsely embeds into a Hilbert space.
\end{proof}

\section{Wreath products}\label{sec:wr}
\subsection{Permutational wreath products}

Let $L$ be a subgroup of $G$; we say that $L$ is {\it co-Haagerup} in $G$ if there exists a proper $G$-invariant conditionally negative definite kernel on $G/L$.

\begin{ex}
\begin{itemize}
\item[i)] Obvious examples include: finite index subgroups; finite subgroups in a Haagerup group; normal subgroups such that $G/L$ is Haagerup.
\item[ii)] A subgroup $H$ of $G$ is {\it co-F\o lner} if the homogeneous space $G/H$ carries a $G$-invariant mean. It should be emphasized that co-F\o lner subgroups are not necessarily co-Haagerup. To see it, consider $G=(\Z \oplus \Z)\rtimes (\Z/2\Z)$ (where $\Z/2\Z$ acts by the flip $\sigma$ of the two factors) and $H$ the first factor in $\Z\oplus\Z$. Clearly $H$ is co-F\o lner in $G$. But observe that every length function $\ell:G\to\R^+$ which is bounded on $H$ is bounded on $G$; indeed, by sub-additivity $\ell$ is bounded first on $\sigma H\sigma^{-1}$ (i.e. the second factor of $\Z\oplus\Z$), then on $\Z\oplus\Z$ (which is generated by $H$ and $\sigma H\sigma^{-1}$), then on $G$. As a consequence, $H$ is not co-Haagerup in $G$.
\item[iii)] If $L$ is co-Haagerup in $G$, then $(G,L)$ is a Hecke pair (this follows e.g. from Proposition \ref{CarHecke} in Appendix B). The converse is not true, as illustrated by the Hecke pair $(\SL_3(\Z[1/p]),\SL_3(\Z))$.
\item[iv)] If $G$ acts on a locally finite tree $T$ and $L$ is some vertex stabilizer, then the distance function on $T$ descends to a proper $G$-invariant conditionally negative definite kernel on $G/L$, i.e. $L$ is co-Haagerup in $G$. This applies for instance to $L=\SL_2(\Z)$ in $G=\SL_2(\Z[1/p])$ (use the $\SL_2(\Q_p)$-action on the $(p+1)$-regular tree). Another example is given by a subgroup $L$ in an HNN-extension $G=\textnormal{HNN}(L,A,\vartheta)$, where both $A$ and $\vartheta(A)$ have finite index in $L$. For a concrete example, consider e.g. the Baumslag-Solitar group
    $$G=\textnormal{BS}(m,n)=\langle a,b\,|\,ab^ma^{-1}=a^n\rangle$$
    with respect to the subgroup $L=\langle b\rangle$.
 \end{itemize}
\end{ex}

\begin{thm}\label{corhagper}
Let $H,G$ be groups, and let $L$ be a subgroup of $G$. Suppose that $G$ and $H$ are Haagerup and that $L$ is co-Haagerup in $G$. Then the permutational wreath product $H\wr_{G/L}G=H^{(G/L)}\rtimes G$ is Haagerup.
\end{thm}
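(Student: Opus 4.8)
The plan is to run the scheme of the proof of Theorem~\ref{second}, but with the $G$-set $X=G/L$ playing the role of $G$, with the support gauge on $W:=H^{(G/L)}$, and with one extra twist at the end to account for the subgroup $L$. I would first establish the statement when $G$ and $H$ are countable, and then reduce the general case to this.

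\emph{Reduction to the countable case.} A finitely generated subgroup of $H^{(G/L)}\rtimes G$ sits inside $H_0^{(\Omega)}\rtimes G_0$, where $G_0\le G$ and $H_0\le H$ are finitely generated and $\Omega\subset G/L$ is the union of the $G_0$-orbits of the supports of finitely many chosen generators; thus $\Omega=\bigsqcup_{j=1}^m G_0/L_j$ is a union of \emph{finitely many} transitive $G_0$-sets, each $L_j$ being of the form $G_0\cap gLg^{-1}$. Each $L_j$ is co-Haagerup in $G_0$: restricting a proper $G$-invariant conditionally negative definite kernel on $G/L$ to the $G_0$-orbit $G_0\cdot gL$ yields a proper $G_0$-invariant one. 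Since $H_0^{(\Omega)}\rtimes G_0$ embeds, via $((w_j)_j,g)\mapsto((w_j,g))_j$, into the finite product $\prod_{j=1}^m\bigl(H_0\wr_{G_0/L_j}G_0\bigr)$ of groups covered by the countable case, this reduction is legitimate.

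\emph{The countable case.} Assume $G,H$ countable and put $W=H^{(G/L)}$, so $H\wr_{G/L}G=W\rtimes G$. I would assemble the three hypotheses as follows. Since $L$ is co-Haagerup in $G$, pick a proper $G$-invariant conditionally negative definite kernel $\kappa_0$ on $G/L$; its square root is a proper, $G$-invariant, $L^2$-embeddable kernel, so by Proposition~\ref{equiv} there is a $G$-invariant measured walls structure $(G/L,\mu)$ with $d_\mu=\sqrt{\kappa_0}$ proper. Take the $W$-invariant, $G$-equivariant gauge $\phi(w,w')=\Supp(w^{-1}w')\in 2^{(G/L)}$ (Example~\ref{gauge_support}); it satisfies $\phi(w,w)=\emptyset$, so Theorem~\ref{cons} produces a measured walls structure $(W\times(G/L),\tilde\mu)$, which is $(W\rtimes G)$-invariant by part~(ii). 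Since $H$ is Haagerup, fix a proper conditionally negative definite function $u_0$ on $H$ and put $u(w)=\sum_{\bar g\in G/L}u_0(w(\bar g))$, a $G$-invariant conditionally negative definite function on $W$; the associated left-invariant kernel $\kappa_u(w,w')=u(w^{-1}w')$ is proper on each set $\{w:\phi(w',w)\subset F\}$ with $F$ finite (there both the $w$'s and their ratios have support in the finite set $\Supp(w')\cup F$, where $u$ is visibly proper). Proposition~\ref{propercom} then shows that $\kappa:=d_{\tilde\mu}+\kappa_u$ is a conditionally negative definite kernel on $W\times(G/L)$ which is proper there, and which, together with the $W$-invariance of $\phi$ and $G$-invariance of $\mu$ and $u$, is $(W\rtimes G)$-invariant.

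\emph{Passing to $W\rtimes G$.} This last step is the only delicate point. Identifying $W\times(G/L)$ with $(W\rtimes G)/L$ (orbit of $(1,eL)$, whose stabilizer is $L$), the pullback $\hat\kappa((v,g),(w,h))=\kappa((v,gL),(w,hL))$ is a left-invariant conditionally negative definite kernel on $W\rtimes G$, but it depends only on the cosets $gL,hL$, so it is \emph{not} proper once $L$ is infinite — the construction of Section~\ref{sec:construction} controls $v$ only modulo $gL$. Here the Haagerup property of $G$ itself must be fed in: pick a proper conditionally negative definite function $\chi$ on $G$ and consider $\psi_{\mathrm{tot}}(v,g):=\hat\kappa(1,(v,g))+\chi(g)$, a conditionally negative definite function on $W\rtimes G$. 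If $\psi_{\mathrm{tot}}(v,g)\le R$, then $\chi(g)\le R$ pins $g$ to a finite set, while $\kappa\bigl((1,eL),(v,gL)\bigr)\le R$ together with properness of $\kappa$ on $W\times(G/L)$ pins $(v,gL)$, hence $v$, to a finite set; so $\psi_{\mathrm{tot}}$ is proper, and $W\rtimes G$ is Haagerup.
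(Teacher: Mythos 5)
Your proof is correct and follows essentially the same route as the paper: its proof of Theorem \ref{corhagper} likewise applies the Section \ref{sec:construction} construction with the support gauge on $H^{(G/L)}$ to a $G$-invariant measured walls structure obtained from the co-Haagerup kernel, adds an $H$-contribution summed over the $G/L$-coordinates (Lemma \ref{aux}, playing the role of your $u$), and -- exactly your ``delicate point'' -- adds a proper structure pulled back from $G$ along the projection $p$ to restore properness in the $L$-direction. The only cosmetic differences are that the paper keeps everything as a single $(W\rtimes G)$-invariant measured walls structure $\rho^*\tilde\mu+p^*\lambda+\hat\sigma$ and checks properness via Proposition \ref{properness}, whereas you assemble conditionally negative definite kernels via Proposition \ref{propercom} and a proper function $\chi$ on $G$, and that your reduction to the countable case is spelled out in more detail than the paper's appeal to finitely generated subgroups.
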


In the proof of Theorem \ref{corhagper}, we use the construction of Section \ref{sec:construction} and the following auxiliary construction.

\begin{lem}Let $G$ be a group, $Y$ a $G$-set, and $H$ another set. Let $(H,\sigma)$ be a measured walls structure. Then there exists a naturally defined $G$-invariant measure $\hat{\sigma}$ on $2^{H^{(Y)}\times G}$, such that $(H^{(Y)}\times G,\hat{\sigma})$ is a measured walls structure with associated distance given by
$$d_{\hat{\sigma}}(wg,w'g')=\sum_{y\in Y}d_\sigma(w_y,w'_y).$$ Suppose moreover that $H$ is a group. If $(H,\sigma)$ is left-invariant (resp. uniform) under $H$, then $(H^{(Y)}\rtimes G,\hat{\sigma})$ is invariant (resp. uniform) under $H^{(Y)}\rtimes G$.\label{aux}
\end{lem}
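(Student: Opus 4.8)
The plan is to realize $d_{\hat\sigma}$ as a direct sum of copies of the measured walls structure $(H,\sigma)$, one for each coordinate $y\in Y$, and then note that the permutation action of $G$ on $Y$ simply permutes the summands, which gives the invariance and uniformity statements almost for free. First I would recall from Section \ref{sec:walls} the construction of the direct sum $\bigoplus_i (X_i,\mu_i)$: after fixing basepoints, one has the measure $\mu=\sum_i p_i^*\mu_i$ on $2^{\bigoplus X_i}$ with $d_\mu((x_i),(y_i))=\sum_i d_{\mu_i}(x_i,y_i)$. Apply this with index set $Y$, each summand equal to $(H,\sigma)$ with basepoint the identity (or any fixed basepoint when $H$ is only a set), to get a measured walls structure on $H^{(Y)}$ whose distance is $\sum_{y\in Y}d_\sigma(w_y,w'_y)$. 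Then pull this back along the projection $H^{(Y)}\times G\to H^{(Y)}$ (using the push-forward construction for maps of measured walls structures, also from Section \ref{sec:walls}) to obtain $(H^{(Y)}\times G,\hat\sigma)$ with the claimed distance formula; the $G$-coordinate simply does not contribute to the distance. One should check the finiteness condition $d_{\hat\sigma}(wg,w'g')<\infty$: since $w,w'\in H^{(Y)}$ have finite support, the sum $\sum_{y\in Y}d_\sigma(w_y,w'_y)$ has only finitely many nonzero terms, each finite because $\sigma$ is a measured walls structure.

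Next I would make the $G$-action explicit. The group $G$ acts on $Y$, hence on $H^{(Y)}$ by $(g\cdot w)_y=w_{g^{-1}y}$, and on $H^{(Y)}\times G$ by $g\cdot(w,h)=(g\cdot w, gh)$ as a subgroup of $H^{(Y)}\rtimes G$. Under the natural identification $2^{H^{(Y)}}\cong\prod_{y} 2^{H}$-type structure coming from the direct sum, the action of $g$ carries the $y$-th summand's contribution $p_y^*\sigma$ to the $(gy)$-th one; since we are summing over all of $Y$, the total measure $\sum_y p_y^*\sigma$ is preserved, so $\hat\sigma$ is $G$-invariant on $2^{H^{(Y)}\times G}$. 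Consequently $d_{\hat\sigma}$ is $G$-invariant, which one can also see directly from the distance formula: $d_{\hat\sigma}(g\cdot wh,g\cdot w'h')=\sum_{y}d_\sigma(w_{g^{-1}y},w'_{g^{-1}y})=\sum_{z}d_\sigma(w_z,w'_z)$ after reindexing $z=g^{-1}y$.

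Finally, suppose $H$ is a group. If $(H,\sigma)$ is left-invariant under $H$, then each summand $(H,\sigma)$ is $H$-invariant, so the base group $H^{(Y)}$ acting coordinatewise preserves $\sum_y p_y^*\sigma$; combined with the $G$-invariance just established, the whole group $H^{(Y)}\rtimes G$ preserves $\hat\sigma$, hence $d_{\hat\sigma}$ is $(H^{(Y)}\rtimes G)$-invariant. If instead $(H,\sigma)$ is merely uniform under $H$, i.e. $\sup_{h\in H}d_\sigma(h h_1, h h_2)<\infty$ for all $h_1,h_2$, then for fixed $w,w'\in H^{(Y)}$ and $h\in H^{(Y)}$, $g\in G$ we bound
\[
d_{\hat\sigma}(hg\cdot w\,,\,hg\cdot w')=\sum_{y\in Y} d_\sigma\big(h_y\cdot w_{g^{-1}y}\,,\,h_y\cdot w'_{g^{-1}y}\big)\le \sum_{z\in Y}\sup_{h\in H} d_\sigma(h w_z, h w'_z),
\]
and the right-hand side is a finite sum (only finitely many $z$ with $w_z\ne w'_z$... actually with $w_z$ or $w'_z$ nontrivial) of finite suprema, hence bounded independently of $hg$; thus $d_{\hat\sigma}$ is $(H^{(Y)}\rtimes G)$-uniform.

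The main obstacle, if any, is purely bookkeeping: one must set up the identification of $2^{H^{(Y)}}$ with the relevant product space carefully enough to track how the $G$-action on $Y$ transports the summands $p_y^*\sigma$, and to confirm that the basepoint choices needed for the direct sum construction do not interfere with invariance (they do not, since invariance of $\sigma$ means the measure, not a chosen basepoint, is what is preserved, and the distance formula is basepoint-free). Everything else follows mechanically from the direct-sum and pull-back constructions of Section \ref{sec:walls}.
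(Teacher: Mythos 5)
Your proposal is correct and follows the same route as the paper: form the direct sum over $Y$ of copies of $(H,\sigma)$ as in Section \ref{sec:walls}, pull it back along the projection $H^{(Y)}\times G\to H^{(Y)}$, and observe that invariance (resp.\ uniformity) under $H^{(Y)}\rtimes G$ follows since $G$ merely permutes the summands while $H^{(Y)}$ acts coordinatewise. The only difference is that you spell out the invariance and uniformity checks that the paper dismisses as straightforward, and you do so correctly.
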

\begin{proof}
Consider the direct sum, indexed by $Y$ of copies of the measured walls structure $(H,\sigma)$, as in Section \ref{sec:walls}. The corresponding pseudo-distance is given by $d(w,w')=\sum_{y\in Y}d_\sigma(w_y,w'_y)$. Take the inverse image to $H^{(Y)}\times G$, to get the $G$-invariant measured walls structure $(H^{(Y)}\rtimes G,\hat{\sigma})$, whose associated pseudo-distance is the desired one. The last assertion is also straightforward.
\end{proof}

\begin{proof}[Proof of Theorem \ref{corhagper}]
Using the remarks at the beginning of Section \ref{sec:other}, we can suppose that $G$ and $H$ are countable.

Define $X=G/L$, $\A=2^{(X)}$, and $W=H^{(G/L)}$. Let $\phi$ be the $\A$-gauge on $W$ given by Example \ref{gauge_support}: $\phi(w,w')=\Supp(w^{-1}w')$.

By the co-Haagerup Property, there exists a $G$-invariant $L^2$-embeddable proper kernel $\kappa$ on $G/L$. By Proposition \ref{equiv}, there exists a $G$-invariant measured walls structure $\mu$ on $X$ with $d_\mu=\kappa$.

Consider the measured walls structure $(W\times X,\tilde{\mu})$ constructed in Section \ref{sec:construction}. By Theorem \ref{cons}, the corresponding pseudo-distance is given by
$$d_{\tilde{\mu}}(1,wx)=\mu\{A|A\vdash\phi(1,w)\cup\{1,x\}\}.$$

Let us now consider the projections $\rho:W\rtimes G\to W\times X$ and $p:W\rtimes G\to G$.

By properness of $d_\mu$, it then follows from Proposition \ref{properness} that any subset of $W\times G$ which is bounded for $d_{\rho^*\tilde{\mu}}$ is contained in a subset of the form
$\{wg\in W\times G\,|\,\Supp(w)\subset F,g\in F'\}$, where $F\subset G/L$ is finite and $F'$ is a subset of $G$ with finite image in $G/L$.

On the other hand, let $(G,\lambda)$ be a proper $G$-invariant measured walls structure. The bounded subsets of $W\rtimes G$ for $d_{p^*\lambda}$ are contained in subsets of the form $W\times F$ for $F\subset G$ a finite subset. It follows that bounded subsets for the pseudo-distance associated to the measured walls structure $(W\rtimes G,\rho^*\tilde\mu+p^*\lambda)$, which is $W\rtimes G$-invariant, are contained in subsets of the form $\{wg\in W\times G\,|\,\Supp(w)\subset F,g\in F'\}$, where $F\subset G/L$ and $F'\subset G$ are both finite.

Since $H$ is Haagerup, there is an $H$-invariant measured walls structure $(H,\sigma)$ whose associated pseudo-distance is proper. Consider the measured walls structure $(W\rtimes G,\hat{\sigma})$ from Lemma \ref{aux}. For any finite subsets $F,F'$ of $G/L$ and $G$, the pseudo-distance $d_{\hat{\sigma}}$ is proper in restriction to the subset
$\{wg\in W\rtimes G\,|\,\Supp(w)\subset F,g\in F'\}$ of $W\times G$.

We finally get that the measured walls structure $(W\rtimes G,\rho^*\tilde\mu+p^*\lambda+\hat{\sigma})$ is $W\rtimes G$-invariant and the corresponding distance is proper.
\end{proof}

\begin{rem}
It readily follows from the proof that\begin{itemize}
\item if $L$ is finite (e.g. in the case of standard wreath products, in which $\rho$ is the identity), then it is enough to consider $\rho^*\tilde\mu+\hat{\sigma}$;
\item if $H$ is finite, then it is enough to consider $\rho^*\tilde\mu+p^*\lambda$, although the distance associated to $\hat{\sigma}$ may still be unbounded;
\item in particular, when $L$ and $H$ are both finite, then the pseudo-distance $d_{\rho^*\tilde\mu}$ is proper.
\end{itemize}
\end{rem}

\subsection{Relative Property T}
If $G$ is a group and $X$ a subset, recall from \cite{C} that $(G,X)$ has relative Property T if whenever a net of positive definite functions converges pointwise to 1, the convergence is uniform on $X$. It is known that this holds if and only if $G$ has some finitely generated subgroup $H$ containing $X$ such that $(H,X)$ has relative Property T \cite[Theorem 2.5.2]{C}. Moreover, if $G$ is countable, then $(G,X)$ has relative Property T if and only if any conditionally negative definite function on $G$ is bounded on $X$ \cite[Theorem 2.2.3]{C}, or equivalently if any measure definite function on $G$ is bounded on $X$. Here is a reformulation of Theorem \ref{wreathT}.

\begin{thm}\label{wreathTbis} Let $H,G$ be any groups. Let $C$ be a subset of the standard wreath product $H\wr G$. Then $(H\wr G,C)$ has relative Property T if and only if the four following conditions are fulfilled:
\begin{itemize}
\item $(G,C_1)$ has relative Property T, where $C_1$ is the projection of $C$ on $G$;
\item $(H,C_2)$ has relative Property T, where $C_2$ is the projection of $C$ on $H$, i.e. the union of all projections on all copies of $H$;
\item $(G,\Supp(C))$ has relative Property T, where $\Supp(C)$ is the union of supports of all $w$, for $wg\in C\subset H^{(G)}\times G$;
\item The function $wg\mapsto\#\Supp(w)$ is bounded on $C$.
\end{itemize}
\end{thm}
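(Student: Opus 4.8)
The plan is to prove both implications of Theorem \ref{wreathTbis} (equivalently Theorem \ref{wreathT}) using the characterization of relative Property T for countable groups in terms of conditionally negative definite (equivalently measure definite) functions, together with the construction of Section \ref{sec:construction}. First I would reduce to the countable case: since $(H\wr G,C)$ has relative Property T iff some finitely generated subgroup containing $C$ has relative Property T with respect to $C$, we may assume $H$ and $G$ are countable. I will write $W=H^{(G)}$ and $H\wr G=W\rtimes G$, and freely identify the underlying set with $H^{(G)}\times G$.

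For the easy direction (``only if''), suppose $(H\wr G,C)$ has relative Property T. The four necessary conditions each follow by pulling back conditionally negative definite functions. For the first, the quotient map $H\wr G\to G$ pulls back a cnd function on $G$ to one on $H\wr G$, bounded on $C$, hence bounded on $C_1$. For the second, fix a copy of $H$ indexed by $g_0\in G$; composing with the inclusion $H\hookrightarrow W\hookrightarrow H\wr G$ of that copy shows $(H,C_2\cap\text{that copy})$ has relative Property T, and since $\Z/2\Z\wr G$-type diagonal tricks or a single inclusion handle the union over copies (all copies are conjugate), $(H,C_2)$ has relative Property T. For the third and fourth conditions I would use the measured walls structure from Theorem \ref{cons}: take any $G$-invariant measured walls structure $(G,\mu)$ with $d_\mu$ proper (none needed — instead use an arbitrary cnd function $\kappa$ on $G$), form $\tilde\mu$ on $W\times G$ using the support gauge $\phi(w,w')=\Supp(w^{-1}w')$; then $d_{\tilde\mu}(1,wg)=\mu\{A\mid A\vdash\Supp(w)\cup\{1,g\}\}$ is an invariant cnd function on $H\wr G$, hence bounded on $C$. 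Choosing $\mu$ coming from a cnd function on $G$ that is unbounded on a given infinite subset of $G$ forces $\Supp(C)$ to satisfy relative Property T in $G$ (a set where every cnd function is bounded). For the boundedness of $\#\Supp(w)$, I would argue that if it were unbounded on $C$, one could build an unbounded cnd function: e.g. pull back the word-length cnd function on $(\Z/2\Z)^{(G)}$ (which is $\#\Supp$) via the canonical homomorphism $W\to(\Z/2\Z)^{(G)}$, extended to $H\wr G$, contradicting relative Property T.

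For the ``if'' direction, assume all four conditions. We must show every cnd function $\Psi$ on $H\wr G$ is bounded on $C$. The strategy is to decompose the control of $\Psi|_C$ into the $G$-part, the $H$-part on each coordinate, and the support-size part. First, restricting $\Psi$ to the subgroup $G$ gives a cnd function on $G$, bounded on $C_1$ by hypothesis. Next, for each $g\in G$, restricting $\Psi$ to the copy $H_g$ of $H$ gives a cnd function on $H$; by $G$-conjugation invariance of the family (but $\Psi$ need not be invariant) one controls each such restriction on the corresponding projection. The key point is: an element $wg\in C$ can be written as a product $wg=(\prod_{x\in\Supp(w)}w_x^{(x)})\cdot g$ where $w_x^{(x)}\in H_x$, a product of at most $N:=\sup_{wg\in C}\#\Supp(w)$ elements (finite by condition four) times an element of $C_1$. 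Using the standard inequality for cnd functions $\Psi(g_1\cdots g_k)\le k\sum_i \Psi$-type bounds — more precisely, if $\Psi$ is cnd then $\sqrt{\Psi}$ is subadditive (a length-type function up to the usual Hilbert space embedding), so $\sqrt{\Psi(wg)}\le \sum_{x\in\Supp(w)}\sqrt{\Psi(w_x^{(x)})}+\sqrt{\Psi(g)}$. Each $w_x^{(x)}$ lies in $H_x$, which is the $x$-translate of $H_1$; the indices $x$ range over $\Supp(C)$, and $(G,\Supp(C))$ has relative Property T, which I would leverage to bound $\sqrt{\Psi(w_x^{(x)})}$ uniformly in $x$ by comparing $\Psi|_{H_x}$ with $\Psi|_{H_1}$ via the cnd function $g\mapsto$ (contribution of moving the base point), controlled on $\Supp(C)$. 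Combined with $\#\Supp(w)\le N$ and boundedness on $C_1$, this gives a uniform bound on $\sqrt{\Psi(wg)}$ over $wg\in C$.

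The main obstacle is the third step of the ``if'' direction: controlling $\Psi$ on the copies $H_x$ uniformly over $x\in\Supp(C)$. Since $\Psi$ is not assumed $G$-invariant, one cannot directly transport the bound from $H_1$ to $H_x$; the correct device is to consider, for fixed $h\in H$, the cnd function $g\mapsto \Psi(g\cdot \iota_1(h))$ where $\iota_1(h)\in H_1$ — no, more carefully, one uses that $\Psi$ restricted to the subgroup generated by $H_1$ and a copy of $G$ decomposes, and that the ``defect'' $\Psi(\iota_x(h))-\Psi(\iota_1(h))$ is dominated by a cnd function in the variable $x\in G$ evaluated on $\Supp(C)$ (using that $x\cdot\iota_1(h)\cdot x^{-1}=\iota_x(h)$ and that conjugation-length functions are cnd). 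I expect this ``uniformity via relative Property T of $\Supp(C)$'' argument, making precise the interplay between the three cnd functions, to be the technically delicate part; everything else is bookkeeping with subadditivity of square roots of cnd functions and the structure of $H\wr G$.
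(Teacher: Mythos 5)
Your ``if'' direction is essentially the paper's own argument: after reducing to countable groups, write $wg=g_1h_1g_1^{-1}\cdots g_kh_kg_k^{-1}\cdot g$ with $g_i\in\Supp(C)$, $h_i\in C_2$, $g\in C_1$ and $k\le N$, and use that the square root of a conditionally negative definite function is a symmetric subadditive length which, by the first three hypotheses, is bounded on $C_1\cup\Supp(C)$ (inside the subgroup $G$) and on $C_2$ (inside the copy $H_1$). The paper does exactly this, only with measure definite functions, which are themselves subadditive. So the step you flag as ``technically delicate'' -- comparing $\Psi|_{H_x}$ with $\Psi|_{H_1}$ -- needs no ``defect'' estimate at all: $\iota_x(h)=x\,\iota_1(h)\,x^{-1}$ and the triangle inequality for $\sqrt{\Psi}$ give $\sqrt{\Psi(\iota_x(h))}\le 2\sqrt{\Psi(x)}+\sqrt{\Psi(\iota_1(h))}$, which is all that is required.

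The genuine gap is in the ``only if'' direction, for the second and fourth conditions. To prove that $(H,C_2)$ has relative Property T you must start from an \emph{arbitrary} conditionally negative definite function $u$ on $H$ and manufacture from it a function on $H\wr G$ whose boundedness on $C$ forces boundedness of $u$ on $C_2$; your proposal goes the wrong way. Composing with the inclusion $H\hookrightarrow H\wr G$ only restricts functions from the big group to $H$, and relative Property T does not pass to smaller ambient groups (compare $(\Z^2\rtimes\SL_2(\Z),\Z^2)$ with $(\Z^2,\Z^2)$); moreover $C_2$ consists of \emph{coordinates} of elements of $C$, so it is not contained in $C$ nor in any conjugate of it, and no conjugation trick repairs this. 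The missing device is the extension of Lemma \ref{aux}: set $\hat u(wg)=\sum_{y\in G}u(w_y)$, which is $(H\wr G)$-invariant, conditionally negative definite (equivalently, use the measured walls structure $\hat\sigma$), and dominates $u(w_y)$ for every coordinate, hence detects $C_2$. The same device with $u$ the discrete metric on $H$ yields the fourth condition, since then $\hat u(wg)=\#\Supp(w)$; your alternative route via ``the canonical homomorphism $H^{(G)}\to(\Z/2\Z)^{(G)}$'' fails because no such homomorphism exists for general $H$ (there is no homomorphism $H\to\Z/2\Z$ sending every nontrivial element to the nontrivial one), although the conclusion is salvageable because $wg\mapsto\#\Supp(w)$ is directly measure definite on $H\wr G$. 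Your treatment of the first and third conditions (projection to $G$, and the construction $\tilde\mu$ with the support gauge) agrees with the paper.
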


To get Theorem \ref{wreathT}, simply put $Y=C_2$ and $Z=C_1\cup\Supp(C)$.

\begin{proof} Using the remarks above, we can suppose that $H$ and $G$ are countable.

If $\mu$ and $\sigma$ are invariant measured walls structures on $G$ and $H$, we freely use the notation used in the proof of Theorem \ref{corhagper}. First, we prove that the conditions of the theorem are necessary:

\begin{itemize}\item If $(H\wr G,C)$ has relative Property T, then we see by projecting that $(G,C_1)$ has relative Property T.

\item If the function $wg\mapsto\#\Supp(w)$ is unbounded on $C$, we pick $\sigma$ as the discrete measured walls structure on $H$, that is $\sigma(A)=\frac 12 \#\{h\in H|\{h\}\in A\}$, so that $d_\sigma(h,h')=0$ if $h=h'$ and $1$ otherwise. Then $d_{\hat{\sigma}}(w,1)=\#\Supp(w)$ is unbounded on $C$.

\item If $(H,C_2)$ does not have relative Property T, then we pick $\sigma$ unbounded on $C_2$ and get that $d_{\hat{\sigma}}$ is unbounded on $C$.

\item Finally if $(G,\Supp(C))$ does not have Property T, pick $\mu$ with $d_\mu$ unbounded on $\Supp(C)$, so that $d_{\tilde{\mu}}$ is unbounded on $C$.
\end{itemize}
Let us now prove that the conditions are sufficient. Suppose they all hold. Let $\psi$ be a measure-definite function on $H\wr G$. Note that $\psi$ is sub-additive. Identify $H$ with the subgroup of $H^{(G)}$ consisting of functions supported at the identity of $G$. There exists a positive constant $K$ such that: (i) $\psi(g)\leq K$ for all $g\in C_1$; (ii) $\psi(h) \leq K$ for all $h\in C_2$; (iii) $\psi(g)\leq K$ for all $g\in \Supp(C)$; (iv) $\#\Supp(w)\leq K$ for all $wg\in C$. Hence, any $wg\in C$ can be written in the form
\[
 wg = g_1h_1g_1^{-1} \cdots g_k h_k g_k^{-1}\cdot g
\]
with $\Supp(w) = \{g_1, \ldots, g_k\}$, whence $k\leq K$, and $h_1,\ldots, h_k\in C_2$. Recall that $g\in C_1$. Consequently, one has $\psi(wg)\leq (3k+1) K \leq 3K^2 + K$; this proves that $\psi$ is bounded on $C$.
\end{proof}

In the permutational case, we can characterize the relative Property T for certain {\it subgroups}.

\begin{prop}\label{subgrinwreath} Let $H, G$ be groups, $X$ a non-empty 
$G$-set, and $W=H^{(X)}$. For $x\in X$, let $p_x:W\rightarrow H:w\mapsto w_x$
denote the projection on the $x$-th factor of $W$. Let $K$ be a subgroup of 
$W$. The following are equivalent:
\begin{enumerate}
\item[(i)] $(W,K)$ has relative Property (T);
\item[(ii)] $(H\wr_XG,K)$ has relative Property T;
\item[(iii)] there exists a finite subset $C\subset X$ such that $K\subset 
H^C$, and $(H,p_x(K))$ has relative Property T for every $x\in C$.
\end{enumerate}
\end{prop}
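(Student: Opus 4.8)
\textbf{Proof plan for Proposition \ref{subgrinwreath}.}

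The plan is to prove the cycle of implications $(ii)\Rightarrow(i)\Rightarrow(iii)\Rightarrow(ii)$, using throughout the characterization (valid for countable groups) that $(\Gamma,S)$ has relative Property T if and only if every conditionally negative definite function on $\Gamma$ is bounded on $S$; as in the proof of Theorem \ref{wreathTbis}, one first reduces to the case where $H$, $G$ (hence $X$) are countable, by passing to a finitely generated subgroup of $H\wr_X G$ containing $K$.

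First I would treat $(ii)\Rightarrow(i)$: since $W=H^{(X)}$ is a subgroup of $H\wr_X G$ and relative Property T passes to a pair $(\Gamma',S)$ whenever $\Gamma'\leq\Gamma$ and $(\Gamma,S)$ has it (any c.n.d.\ function on $\Gamma'$ extends, or rather restrict a c.n.d.\ function — more simply, restriction of unitary representations), this is immediate. Next, $(i)\Rightarrow(iii)$: assume $(W,K)$ has relative Property T. For the finiteness of support, apply the ``counting'' c.n.d.\ function: the kernel $w\mapsto \#\Supp(w)$ on $W=H^{(X)}$ is a c.n.d.\ function (it is $d_{\hat\sigma}(w,1)$ for the discrete measured walls structure $\sigma$ on $H$, exactly as in the proof of Theorem \ref{wreathTbis}); since it is bounded on $K$, say by $N$, every element of $K$ has support of size at most $N$. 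This alone does not give that $\bigcup_{w\in K}\Supp(w)$ is finite, so the second point is to use a c.n.d.\ function on $W$ detecting an infinite ``horizontal spread'': if $C_\infty:=\bigcup_{w\in K}\Supp(w)$ were infinite, pick a proper c.n.d.\ function $v$ on $H$ (if $H\neq\{1\}$; if $H=\{1\}$ everything is trivial) with $v(1)=0$ and extend it to $W$ by $u(w)=\sum_{x\in X}v(w_x)$ — this is c.n.d.\ on $W$; one then shows that, combined with the bounded-support fact, $u$ being bounded on $K$ forces $C_\infty$ to be finite. Actually the cleaner route: having $C_\infty$ infinite would let us build, inside $K$, elements whose images under suitable projections escape to infinity, contradicting relative Property T of $(W,K)$ via the function $u$; this needs a small combinatorial argument exploiting that supports have bounded size (so infinitely many of them are ``disjoint enough''). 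Finally, with $C:=C_\infty$ finite and $K\subset H^C$, the projection $P_x:W\to H$ restricted to $K$ is a homomorphism onto $P_x(K)$, and relative Property T is preserved under push-forward by surjective homomorphisms (a c.n.d.\ function on $H$ pulls back to one on $W$, bounded on $K$, hence bounded on $P_x(K)$), giving $(H,P_x(K))$ relative Property T.

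For $(iii)\Rightarrow(ii)$: assume $C\subset X$ finite with $K\subset H^C$ and $(H,P_x(K))$ relative Property T for each $x\in C$. Let $\psi$ be a measure-definite (equivalently, we may take c.n.d.) function on $H\wr_X G$; it is subadditive. By hypothesis there is $N$ with $\psi(h)\leq N$ for every $h\in\bigcup_{x\in C}P_x(K)$ (here we identify $P_x(K)\subset H$ with the copy of $H$ sitting at coordinate $x$ inside $W\subset H\wr_X G$, using that $\psi$ restricted to that copy dominates a c.n.d.\ function on $H$ bounded on $P_x(K)$). Any $w\in K$ can be written $w=\prod_{x\in C} w_x$ (a product of at most $\#C$ terms, one in each coordinate-copy of $H$), and each factor $w_x\in P_x(K)$, so by subadditivity $\psi(w)\leq (\#C)\cdot N$. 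Hence $\psi$ is bounded on $K$, proving $(H\wr_X G,K)$ has relative Property T.

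The main obstacle I anticipate is the step in $(i)\Rightarrow(iii)$ showing that $C_\infty=\bigcup_{w\in K}\Supp(w)$ is \emph{finite}, not merely that supports are uniformly bounded in size: one must genuinely use the group structure of $K$ (e.g.\ that $K$ is closed under multiplication, so supports can be merged and cancelled) together with the bound $N$ on support sizes to rule out an infinite union, and then feed this into the c.n.d.\ function $u(w)=\sum_x v(w_x)$. This is the place where a careful, slightly delicate combinatorial argument is required; the remaining implications are formal consequences of the c.n.d.-function characterization of relative Property T and subadditivity.
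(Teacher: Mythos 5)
Your cycle breaks at the step you treat as immediate. Relative Property T does \emph{not} pass from a pair $(\Gamma,S)$ down to a pair $(\Gamma',S)$ with $S\subset\Gamma'\leq\Gamma$: a conditionally negative definite function on the subgroup $\Gamma'$ need not extend to $\Gamma$, and restricting representations goes the wrong way. The standard counterexample is $(\Z^2\rtimes\SL_2(\Z),\Z^2)$, which has relative Property T while $(\Z^2,\Z^2)$ does not. Heredity goes in the opposite direction (from a subgroup pair up to the ambient group), so the trivial implication is (i)$\Rightarrow$(ii), as in the paper; (ii)$\Rightarrow$(i) is precisely the nontrivial content of the proposition, which the paper obtains only by passing through (iii). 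With your (ii)$\Rightarrow$(i) invalid, nothing in your scheme leads out of (ii), so the equivalence is not established even if the remaining steps were complete.

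The step (i)$\Rightarrow$(iii) is also gapped. First, you invoke a \emph{proper} conditionally negative definite function $v$ on $H$; such a function exists only when $H$ is Haagerup, whereas $H$ is arbitrary here (it could be an infinite Property T group), so this tool is unavailable. Second, the crucial point -- that $\bigcup_{w\in K}\Supp(w)$ is finite, not merely that $\#\Supp$ is bounded on $K$ -- is exactly what you leave as a ``delicate combinatorial argument,'' i.e.\ it is the heart of the matter and is missing. The paper handles it purely algebraically and without any extra hypothesis on $H$: once $w\mapsto\#\Supp(w)$ is known to be bounded on $K$ (via the wall structure $\hat\sigma$ coming from the discrete walls on $H$, which restricts to a c.n.d.\ function on $W$, so this part works from (i) or (ii) alike), pick $w_0\in K$ whose support $C$ has maximal cardinality; any nontrivial $w_1\in K$ supported in $X\setminus C$ would give $\Supp(w_0w_1)=C\sqcup\Supp(w_1)$ of strictly larger cardinality, so $K\cap H^{(X\setminus C)}=\{1\}$ and one concludes $K\subset H^C$. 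Your treatment of the projections $(H,p_x(K))$ (pulling back c.n.d.\ functions through $P_x$) and your (iii)$\Rightarrow$(ii) via subadditivity of a measure-definite function over the finitely many coordinates in $C$ are fine and parallel the paper's argument for Theorem \ref{wreathTbis} (the paper itself proves (iii)$\Rightarrow$(i) by noting that $(H^C,\prod_{x\in C}p_x(K))$ has relative Property T and then enlarging the ambient group), but as it stands the proposal does not prove the proposition.
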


\begin{lem}\label{combi}
Let $X$ be an infinite set and $(S_i)_{i\in I}$ a family of finite subsets of
bounded cardinality. Suppose that $\bigcup_i S_i$ is infinite. Then there
exists an infinite subset $J\subset I$, a subset $F\subset X$, and pairwise 
disjoint nonempty finite subsets $F_j\subset X-F$ ($j\in J$) such that 
$S_j= F\cup F_j$ for all $j\in J$.
\end{lem}
\begin{proof}
First, let $(i_n)$ be a sequence in $I$ such that $S_{i_n}$ is not contained 
in $\bigcup_{m<n}S_{i_m}$. Define $J_0=\{i_n:n\ge 0\}$. 
Let $J_1\subset J_0$ be an infinite subset such that the cardinality $d$ of 
$F=\underline{\lim}_{j\in J_1}S_j$ is maximal and set $F_j=S_j-F$ for 
$j\in J_1$. Note that $F_j$ is empty, that is $S_j$ is contained 
in $F$, only for finitely many $j$'s because of the definition of $J_0$. 
Moreover, for any $j_1\in J_1$, the set of indices $j\in J_1$ such that 
$F_{j_1}\cap F_j\neq \emptyset$ is finite, since otherwise one 
would get an infinite subset
$J_2\subset J_1$ contradicting the maximality of $d$. Therefore there 
exists $J\subset J_1$  fulfilling the desired properties.
\end{proof}

\begin{proof}[Proof of Proposition \ref{subgrinwreath}]
$(i)\Rightarrow (ii)$ is trivial. For $(ii)\Rightarrow(iii)$, assume that $(H\wr_XG,K)$ has relative Property T. Exactly as in the proof of Theorem \ref{wreathTbis}, it is seen that the function $w\mapsto\#\Supp(w)$ is bounded on $K$. 
Let us check that $C:=\bigcup_{w\in K}\Supp(w)$ is finite. Otherwise, apply Lemma \ref{combi} to find in $K$ a sequence $(w_n)$ with support $F\cup F_n$ with $F,(F_n)$ pairwise disjoint and $F_n$ not empty. Clearly, the elements $\prod_{i=1}^nw_i$ of $K$ have support of unbounded cardinality, a contradiction. 
If, for some $x\in C$, the pair $(H,p_x(K))$ does not have the relative Property T, we find a measured wall structure $\sigma$ on $H$ such that $d_\sigma$ is unbounded on $p_x(K)$, and get that $d_{\hat{\sigma}}$ is unbounded on $K$, hence contradicting our assumption.

Finally, assume that the conditions in $(iii)$ are satisfied. Then $(H^C,\prod_{x\in C}p_x(K))$ has the relative Property T. So increasing $H^C$ to $W$ and decreasing $\prod_{x\in C}p_x(K)$ to $K$, we see that $(W,K)$ has the relative Property T, i.e. $(i)$ is satisfied.
\end{proof}

As a special case of Proposition \ref{subgrinwreath}, the pair $(H\wr_XG,W)$ has relative Property T if and only if $W$ has Property T, if and only if $X$ is finite and $H$ has Property T (this can also be deduced from Section 2.8 in \cite{BHV}).

Proposition \ref{subgrinwreath} allows us to characterize the existence of an infinite subgroup with relative Property T. This improves on a result of Neuhauser \cite[Theorem~1.1]{Neu}.

\begin{thm}\label{relTinpermwreath}
Let $H,G$ be groups, and $X$ a non-empty $G$-set. Then $H\wr_X G$ has relative Property T with respect to some infinite subgroup if and only if either $H$ or $G$ has relative Property T with respect to some infinite subgroup.
\end{thm}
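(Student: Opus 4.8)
The statement asserts an equivalence, and the ``if'' direction is essentially immediate: if $H$ (resp.\ $G$) has an infinite subgroup $K$ with relative Property T, then identifying $H$ with one factor $H^{\{x_0\}}$ of $W=H^{(X)}$ (resp.\ $G$ with the top subgroup of $H\wr_X G$) gives an infinite subgroup of $H\wr_X G$ with relative Property T, since relative Property T passes to pairs $(L,C)$ when $L$ is a subgroup containing $C$. So the content is in the ``only if'' direction: I assume $H\wr_X G$ has an infinite subgroup $\Gamma$ with $(H\wr_X G,\Gamma)$ relative Property T, and I must produce an infinite subgroup with relative Property T inside either $H$ or $G$.

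The natural dichotomy is to split $\Gamma$ according to its interaction with $W=H^{(X)}$. Consider the projection $p\colon H\wr_X G\to G$ and let $N=\Gamma\cap W$, so $p(\Gamma)\cong\Gamma/N$. If $p(\Gamma)$ is infinite, then since relative Property T passes to quotients (a conditionally negative definite function on $G$ that is unbounded on $p(\Gamma)$ pulls back to one on $H\wr_X G$ unbounded on $\Gamma$, using that $\psi\mapsto\psi\circ p$ preserves conditional negative definiteness), the pair $(G,p(\Gamma))$ has relative Property T with $p(\Gamma)$ infinite, and we are done via the infinite subgroup $\langle p(\Gamma)\rangle=p(\Gamma)$ of $G$. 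So we may assume $p(\Gamma)$ is finite, hence $N=\Gamma\cap W$ is infinite, and $(W,N)$ has relative Property T (again by restricting to the subgroup $W$, which contains $N$). Now apply Proposition \ref{subgrinwreath}: there is a finite $C\subset X$ with $N\subset H^C$ and $(H,p_x(N))$ having relative Property T for each $x\in C$. Since $C$ is finite and $N\subset\prod_{x\in C}p_x(N)$ is infinite, at least one $p_x(N)$ must be infinite; for that $x$, the pair $(H,p_x(N))$ has relative Property T with $p_x(N)$ an infinite subgroup of $H$, completing the proof.

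\textbf{Main obstacle.} The delicate point is the case analysis on the size of $N=\Gamma\cap W$ versus $p(\Gamma)$, together with making sure that in the case $p(\Gamma)$ finite we genuinely land inside $W$: one needs that $N$ is then infinite (clear, since $[\Gamma:N]<\infty$) and that $(W,N)$ inherits relative Property T, which is the ``subgroup'' permanence property stated at the start of Section \ref{sec:wr} combined with the fact that $N\subset W\subset H\wr_X G$. After that, the only real work is invoking Proposition \ref{subgrinwreath} correctly, and the pigeonhole step that some $p_x(N)$ is infinite because $C$ is finite and $N$ embeds into $\prod_{x\in C}p_x(N)$; none of this requires computation. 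One subtlety worth a sentence in the write-up is that $p_x(N)$ need not be all of $p_x(W)=H$, but it is a genuine subgroup of $H$, which is all we need.
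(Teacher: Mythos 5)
Your overall architecture is exactly the paper's argument, just stated directly rather than contrapositively: project $\Gamma$ to $G$; if the projection is infinite you are done, otherwise pass to the finite-index subgroup $N=\Gamma\cap W$, invoke Proposition \ref{subgrinwreath}, and pigeonhole over the finite set $C$. All of that is sound. The one genuine flaw is the justification of the step ``$(W,N)$ has relative Property T \ldots by restricting to the subgroup $W$, which contains $N$'', which you repeat in your ``main obstacle'' paragraph as a ``subgroup permanence property''. Relative Property T does \emph{not} descend from $(G,C)$ to $(L,C)$ for an intermediate subgroup $C\subset L\le G$: the pair $(\Z^2\rtimes\SL_2(\Z),\Z^2)$ has relative Property T while $(\Z^2,\Z^2)$ does not, $\Z^2$ being infinite and amenable. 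Moreover, the statement recalled at the beginning of Section \ref{sec:wr} (Theorem 2.5.2 of [C]) is existential --- \emph{some} finitely generated subgroup containing the subset witnesses relative Property T --- and does not say that every overgroup of the subset does, so it cannot give you $(W,N)$ directly.

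Fortunately the offending step is both unnecessary and, in the end, true. Since $N\subset\Gamma$ and $(H\wr_X G,\Gamma)$ has relative Property T, the pair $(H\wr_X G,N)$ has it as well (subsets trivially inherit the property), and this is precisely condition (ii) of Proposition \ref{subgrinwreath}; the implication (ii)$\Rightarrow$(iii) then produces the finite set $C\subset X$ with $N\subset H^C$ and relative Property T for each pair $(H,p_x(N))$, after which your pigeonhole argument (some $p_x(N)$ is infinite because the infinite group $N$ injects into $\prod_{x\in C}p_x(N)$) is correct. That $(W,N)$ has relative Property T is then an \emph{output} of the proposition, via (ii)$\Rightarrow$(i), not an input. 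With this one-line repair your proof is correct and coincides in substance with the paper's, which proves the contrapositive using exactly the same ingredients. The same care should be taken with the permanence you cite in the ``if'' direction: what is needed and true there is the passage from a pair $(L,C)$ with $L\le G$ to the pair $(G,C)$ for the ambient group, not the reverse.
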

\begin{proof}
The condition is obviously sufficient. Conversely suppose that $G$ and $H$ do not have relative Property T with respect to any infinite subgroup, and let us show that the same holds for $H\wr_X G$. So let $K$ be a subgroup of $H\wr_XG$ with the relative Property T. Then the projection of $K$ on $G$ is finite. Replacing $K$ with a finite index subgroup, we may assume that $K\subset H^{(X)}$. By Proposition \ref{subgrinwreath}, there exists a finite subset $C\subset X$, with $K\subset H^C$, and moreover each projection $p_x(K)$ (for $x\in C$) is finite. So $K$ itself is finite.
\end{proof}

\section{Wreath product with a free group}\label{sct:comp}

Given a tree $T=(V,E)$, let us recall that each (unoriented) edge $e$ defines a partition $V = V_e^+ \sqcup V_e^-$ corresponding to connected components of $T\setminus\{e\}$. Setting $\mathcal W = \bigcup_{e\in E}\{V_e ^+, V_e^-\}$ and $\mu=\frac 12 \sum_{W\in\mathcal W}\delta_W$, we obtain a discrete walls structure on $V$ such that the associated distance $d_\mu$ coincides with the tree distance.

Let us now consider a finitely generated free group $\F = \F(S)$. Its Cayley graph being a tree, we obtain a discrete wall structure $\mu$ on $\F$, whose distance coincides with the word length with respect to $S$. Obviously, $\mu$ is $\F$-invariant.

Let $H$ be a finitely generated group with word length $|\cdot|$ with respect to some generating subset $S'$. We form the wreath product $H\wr \F = H^{(\F)}\rtimes \F$. In what follows, we identify $\F$ with the subgroup $\{1\}\times \F$ and $H$ with the functions supported on $\{1\}$ in $H^{(\F)}$. With this convention, $S\cup S'$ is a finite generating set of $H\wr \F$. The associated word length is given by
$$|wg|=m(w,g) + \sum_{g'\in \F}|w_{g'}|,$$
where $g\in \F$, $w\in H^{(\F)}$ and $m(w,g)$ is the length of the shortest path in the Cayley graph of $\F$ joining $1$ to $g$ and covering $\Supp(w)$ (see for instance \cite[Theorem 1.2]{Par}).

We can now introduce the main result of this section.
\begin{defn}
 Let $G$ be a finitely generated group with word length $|\cdot|$ and let $d$ be a left-invariant pseudo-distance on $G$. A \emph{compression function} for $d$ is a function $\alpha:\R_+\to \R_+$
 such that $d(1,g)\ge\alpha(|g|)$ for all $g\in G$.
\end{defn}
\begin{prop}\label{fctComp}
Let $\alpha:\R_+\to \R_+$ be a non-decreasing and subadditive function and let $\F$ be a finitely generated free group. If a finitely generated group $H$ has a left-invariant measured walls structure whose corresponding pseudo-distance admits $\alpha$ as a compression function, then so does the wreath product $H\wr \F$.
\end{prop}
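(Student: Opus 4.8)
The plan is to assemble the required measured walls structure on $H\wr\F=H^{(\F)}\rtimes\F$ from two pieces produced by the machinery of Sections \ref{sec:walls}--\ref{sec:construction}: a ``horizontal'' piece that lifts the tree walls structure of $\F$ along the support gauge and will dominate the $m(w,g)$-term of the word length, and a ``vertical'' piece furnished by Lemma \ref{aux} that will dominate the $\sum_{g'\in\F}|w_{g'}|$-term. Throughout, $(H,\sigma)$ denotes the given left-invariant measured walls structure with $d_\sigma(1,h)\ge\alpha(|h|)$ for all $h\in H$, and $\mu$ denotes the $\F$-invariant tree walls structure on the Cayley tree of $\F$ (Example \ref{ex:trees}), for which $d_\mu$ is the word length of $\F$.

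First I would apply Theorem \ref{cons} with $W=H^{(\F)}$, $X=\F$ (with $\F$ acting on $X$ by left translation and on $W$ by shifting indices), $\A=2^{(\F)}$, and the $W$-invariant, $G$-equivariant gauge $\phi(w,w')=\Supp(w^{-1}w')$ from Example \ref{gauge_support}; since $\phi(w,w)=\emptyset$ the hypotheses hold, and as $\mu$ is $\F$-invariant, part (ii) yields an $(H\wr\F)$-invariant measured walls structure $(H^{(\F)}\times\F,\tilde\mu)$ with
\[
 d_{\tilde{\mu}}(1,wg)=\mu\{A\mid A\vdash\Supp(w)\cup\{1,g\}\}.
\]
By the very definition of $\mu$, this quantity equals the number of edges of the smallest subtree of the Cayley tree of $\F$ containing $\Supp(w)\cup\{1,g\}$; by the standard identity for the shortest closed-up traversal of a finite subtree, that number is exactly $\tfrac12\bigl(m(w,g)+|g|\bigr)$, and in any case $d_{\tilde{\mu}}(1,wg)\ge\tfrac12\,m(w,g)$ (a depth-first traversal of that subtree from $1$, visiting $\Supp(w)$ and ending at $g$, uses at most twice its edges). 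Second, I would apply Lemma \ref{aux} with $G=\F$ and $Y=\F$ the left-regular $\F$-set, obtaining an $(H\wr\F)$-invariant measured walls structure $(H^{(\F)}\rtimes\F,\hat\sigma)$ with $d_{\hat\sigma}(1,wg)=\sum_{g'\in\F}d_\sigma(1,w_{g'})\ge\sum_{g'\in\F}\alpha(|w_{g'}|)$.

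Now set $C=\max(1,\alpha(1))$ and $\Theta=2C\,\tilde\mu+\hat\sigma$; a positive rescaling and a sum of measured walls structures is again one, and $(H\wr\F)$-invariance is preserved, so $(H\wr\F,\Theta)$ is a left-invariant measured walls structure. Combining the two lower bounds above,
\[
 d_\Theta(1,wg)\ \ge\ C\,m(w,g)+\sum_{g'\in\F}\alpha(|w_{g'}|).
\]
Since $\alpha$ is subadditive, an immediate induction gives $\alpha(n)\le n\,\alpha(1)\le Cn$ for every $n\in\N$, hence $\alpha(m(w,g))\le C\,m(w,g)$ because $m(w,g)$ is a non-negative integer; applying subadditivity once more (only finitely many $w_{g'}$ are nontrivial),
\[
 d_\Theta(1,wg)\ \ge\ \alpha(m(w,g))+\sum_{g'\in\F}\alpha(|w_{g'}|)\ \ge\ \alpha\Bigl(m(w,g)+\sum_{g'\in\F}|w_{g'}|\Bigr)\ =\ \alpha(|wg|),
\]
the last equality being the word-length formula for $H\wr\F$ recalled before the proposition. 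Thus $\alpha$ is a compression function for $d_\Theta$, which is what we wanted. The only step calling for genuine care is the identification of $d_{\tilde\mu}(1,wg)$ with (essentially) $m(w,g)$ --- i.e.\ recognizing $\mu\{A\mid A\vdash F\}$ as the edge count of the Steiner subtree of $F$ and invoking the tree-traversal identity; the rest is bookkeeping and direct appeals to Theorem \ref{cons}, Lemma \ref{aux}, and the subadditivity of $\alpha$.
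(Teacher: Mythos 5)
Your proof is correct and is essentially the paper's own argument: the paper likewise forms $\tilde\mu+\hat\sigma$ (lifting the tree walls of $\F$ along the support gauge via Theorem \ref{cons} and adding the Lemma \ref{aux} structure), identifies $\mu\{A\mid A\vdash\Supp(w)\cup\{1,g\}\}$ with the edge count of the convex-hull subtree, and uses the same loop-covering bound $n\ge\tfrac12 m(w,g)$ together with subadditivity of $\alpha$. The only (equivalent) difference is bookkeeping of constants: the paper replaces $\alpha$ by $\beta=\alpha/C$ with $\beta(r)\le r/2$ on $\N^*$ and rescales the final measure by $C$, whereas you rescale $\tilde\mu$ by $2C$ with $C=\max(1,\alpha(1))$.
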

\begin{proof} Choose a constant $C\ge 1$ such that the function $\beta = \alpha/C$ satisfies $\beta(r) \le r/2$ for all $r\in \N^*$.
Let $\sigma$ be a left-invariant measured walls structure on $H$ such that $\alpha$ is a compression function for $d_\sigma$. Then $\beta$ is a compression function for $d_\sigma$, which is still non-decreasing and subadditive.

Let $\Phi(\sigma) = \tilde{\mu}+\hat{\sigma}$, where $\hat \sigma$ is constructed as in Lemma \ref{aux}, with $Y=\F$, and $\tilde \mu$ is constructed as in Section \ref{sec:construction} with respect to the gauge $\phi:H^{(\F)} \times H^{(\F)} \to 2^{(\F)}$ given by $\phi(w,w') = \Supp(w^{-1}w')$. This is a left-invariant measured walls structure on $H\wr \F$ by Theorem \ref{cons} and Lemma \ref{aux}. We also get the formula
$$d_{\Phi(\sigma)}(wg,1)=\#\big\{\text{edges cutting }\Supp(w)\cup\{1,g\}\big\} +\sum_{g'\in \F}d_\sigma(1,w_{g'}),$$
where we identify the free group $\F$ to the vertices of its Cayley tree.

Let us now recall that the edges cutting $\Supp(w)\cup\{1,g\}$ form a finite subtree, namely the convex hull of $\Supp(w)\cup\{1,g\}$.

\begin{claa}
Let $T$ be a finite tree with $n$ edges. Then there exists a loop of length $2n$ covering $T$.\label{convtree}
\end{claa}
This is proved by a trivial induction: if $T$ has at least one edge, pick a terminal vertex $v$, and use the induction hypothesis on the subtree with vertex set $T-\{v\}$.
\medskip

It follows from the claim and properties of $\beta$ that
\begin{eqnarray*}
 d_{\Phi(\sigma)}(wg,1) & \ge & \frac12m(w,g)+\sum_{g'\in \F}\beta(|w_{g'}|) \\
 & \ge & \beta(m(w,g)) + \sum_{g'\in \F}\beta(|w_{g'}|)   \\
 & \ge & \beta\left(m(w,g)+\sum_{g'\in \F}|w_{g'}|\right)=\beta(|wg|).
\end{eqnarray*}
Hence, $\beta$ is a compression function for $d_{\Phi(\sigma)}$, and $\alpha = C\beta$ is a compression function for $d_{C\cdot\Phi(\sigma)}$.
\end{proof}

In particular, Proposition \ref{fctComp} can be applied to the $n$-fold iterated wreath product $H_n=(\dots((H\wr \F_{k_1})\wr \F_{k_2})\dots \wr \F_{k_n})$. For instance, if $H$ has a left-invariant measured walls structure whose corresponding pseudo-distance has compression $n^d$ for some $d\in [0,1]$, then so does $H_n$.
\begin{defn}
 Let $G$ be a finitely generated group. We set $B_1(G)$ to be the supremum of the numbers $d\in [0,1]$ such that $r\mapsto r^d$ is a compression function for some pseudo-distance associated to a left-invariant measured walls structure on $G$.
\end{defn}

Using $(iii)\Leftrightarrow (iv)$ in Proposition \ref{equiv}, one can see that $B_1(G)$ is equal to the {\em (strong) equivariant $L^1$-compression} of $G$, that is, the supremum of the numbers $d\in [0,1]$ such that there exists an $L^1$-space $E$, endowed with an isometric $G$-action, and a $G$-equivariant map $f:G\to E$ which satisfies
\[
 |x^{-1}y|^d \le ||f(x) - f(y)||  \quad \forall x,y\in G
\]
(note that such a map always satisfies $||f(x) - f(y)|| \le C\cdot |x^{-1}y|$ for some positive constant $C$). Proposition \ref{fctComp} has the following immediate consequence:

\begin{cor}
 One has $B_1(H_n)=B_1(H)$ for all $n\ge 1$.
\end{cor}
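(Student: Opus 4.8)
The plan is to deduce the corollary directly from Proposition~\ref{fctComp} together with the characterization of $B_1$ in terms of compression functions for left-invariant measured walls structures. By induction it suffices to treat the case $n=1$, i.e. to show $B_1(H\wr\F)=B_1(H)$ for a finitely generated free group $\F$; the general case follows by iterating, since $H_n = H_{n-1}\wr\F_{k_n}$ and $H_{n-1}$ is again finitely generated.

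For the inequality $B_1(H\wr\F)\ge B_1(H)$, I would fix $d<B_1(H)$, so that $r\mapsto r^d$ is a compression function for the pseudo-distance of some left-invariant measured walls structure on $H$. The function $\alpha(r)=r^d$ is non-decreasing and subadditive on $\R_+$ (subadditivity of $r\mapsto r^d$ for $d\in[0,1]$ being the elementary inequality $(s+t)^d\le s^d+t^d$), so Proposition~\ref{fctComp} applies and gives a left-invariant measured walls structure on $H\wr\F$ with compression function $r\mapsto r^d$, up to a multiplicative constant which does not affect the power $d$. Taking the supremum over such $d$ yields $B_1(H\wr\F)\ge B_1(H)$.

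For the reverse inequality $B_1(H\wr\F)\le B_1(H)$, I would use that $H$ embeds isometrically (for the word metrics, up to the canonical bi-Lipschitz comparison of word metrics) as the subgroup $H^{(\{1\})}\le H^{(\F)}\le H\wr\F$, and that this inclusion is undistorted: indeed the word length formula $|wg| = m(w,g)+\sum_{g'}|w_{g'}|$ shows that for $w$ supported at $1$ one has $|w|_{H\wr\F}=|w|_H$. Hence any left-invariant measured walls structure on $H\wr\F$ with compression function $\alpha$ restricts, via the inclusion map of Section~\ref{sec:walls} (pull-back of a measured walls structure to a subset), to a left-invariant measured walls structure on $H$ with the same compression function. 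Thus every exponent $d$ realized for $H\wr\F$ is realized for $H$, giving $B_1(H\wr\F)\le B_1(H)$.

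The only mild subtlety is the bookkeeping for $n\ge 2$: one needs that each intermediate group $H_m$ is finitely generated (clear, as a wreath product of finitely generated groups over a finitely generated group) and that the word metric used at each stage is the canonical one up to bi-Lipschitz equivalence, so that $B_1$ is well-defined and invariant. None of this is a real obstacle; the substance is entirely contained in Proposition~\ref{fctComp} and in the characterization of $B_1$ via $(iii)\Leftrightarrow(iv)$ of Proposition~\ref{equiv}, both already available. I expect no serious difficulty; the proof is essentially a one-line invocation of Proposition~\ref{fctComp} in each direction plus the trivial induction.
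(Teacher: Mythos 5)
Your argument is correct and is exactly what the paper intends: the paper states the corollary as an immediate consequence of Proposition~\ref{fctComp}, i.e.\ the inequality $B_1(H\wr\F)\ge B_1(H)$ obtained by applying the proposition to $\alpha(r)=r^d$, with the reverse inequality regarded as obvious since $H$ sits undistorted in $H\wr\F$ (your restriction of the invariant measured walls structure to the isometrically embedded copy of $H$ is a fine way to make this explicit), followed by the evident induction on $n$.
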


\begin{rem}One could define another notion of equivariant $L^1$-compression by replacing $E$ by a metric space $Y$, endowed with an isometric $G$-action, which is isometrically embeddable into an $L^1$-space. Note that these numbers would be equal if we could prove the ``missing implication'' in Proposition \ref{equiv}.
\end{rem}

\begin{rem}
It might be tempting to streamline the construction of Section \ref{sec:construction} and the proof of Theorem \ref{corhagper} by directly constructing a measure definite kernel instead of a left-invariant measured walls structure. However, since we do not know if the ``missing implication'' in Proposition \ref{equiv} holds (see the remark following this proposition), and since the construction as well as $d_{\tilde{\mu}}$ itself definitely depends on the measured walls structure and not only on the associated distance, there would be a loss in the iterates, for instance we would only get $B_1(H_n)\ge 2^{-n}B_1(H)$.
\end{rem}

\appendix
\section{Measured walls structures vs. spaces with measured walls \`a la Cherix-Martin-Valette}\label{HSMeasure}

For the purpose of the appendix, we introduce the following definition. It seems even more natural than the one of measured walls structure, but we chose the latter because it is sometimes more tedious to work with partitions (walls) rather than subsets (half-spaces).

\begin{defn}\label{MesWalStrAlt}
An {\it alternate measured walls structure} is a pair $(X,\nu)$ where $X$ is a set and $\nu$ is a Borel measure on the set of bipartitions of $X$ such that for all $x,y\in X$
$$d_\nu(x,y):=\nu(\{A|A\vdash\{x,y\}\})<\infty.$$
\end{defn}

Define $\mathcal{W}(X)$, the set of bipartitions of $X$, i.e. the quotient of $2^X$ by the complementation involution $s:2^X\rightarrow 2^X;A \mapsto A^c$. Besides, say that a measured walls structure on $X$ is symmetric if
$\mu$ is $s$-invariant. Let $p$ be the canonical map $2^X\to\mathcal{W}(X)$.

\begin{lem}\label{lem:rev2}
The map $\mu\mapsto p_*\mu$ is
a (canonical) bijection between the set of symmetric measured walls structures on $X$ and the set of alternate measured walls structures on $X$, which preserves the associated pseudo-distance.
\end{lem}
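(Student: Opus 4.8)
The plan is to show that $\mu \mapsto p_*\mu$ is a well-defined map from symmetric measured walls structures to alternate measured walls structures, exhibit an explicit inverse, and check that the pseudo-distance is preserved. First I would observe that if $\mu$ is a symmetric Borel measure on $2^X$, then $p_*\mu$ is a Borel measure on $\mathcal{W}(X)$ (the quotient topology makes $p$ continuous, hence Borel, so pushforward is legitimate). The key computation is that for $x,y \in X$, the set $\mathcal{A}_x \bigtriangleup \mathcal{A}_y$ is precisely the preimage under $p$ of $\{[A] : A \vdash \{x,y\}\}$: indeed $A \in \mathcal{A}_x \bigtriangleup \mathcal{A}_y$ means exactly one of $x,y$ lies in $A$, i.e. $A$ separates $x$ from $y$, i.e. $A \vdash \{x,y\}$, and this property is invariant under complementation so it descends to bipartitions. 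Hence $d_{p_*\mu}(x,y) = (p_*\mu)(\{[A] : A \vdash \{x,y\}\}) = \mu(p^{-1}(\cdots)) = \mu(\mathcal{A}_x \bigtriangleup \mathcal{A}_y) = d_\mu(x,y)$, which is finite; so $p_*\mu$ is an alternate measured walls structure and the pseudo-distance is preserved.

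Next I would construct the inverse. Given an alternate measured walls structure $(X,\nu)$, I want to produce a symmetric $\mu$ on $2^X$ with $p_*\mu = \nu$. The natural choice is to split the mass of each bipartition equally between its two representatives: formally, choose a Borel section issue aside, one can define $\mu = \frac{1}{2}\big(q_*\nu + (s \circ q)_*\nu\big)$ for any Borel section $q : \mathcal{W}(X) \to 2^X$ of $p$, and check this is independent of the section and is $s$-invariant by construction. A cleaner alternative, avoiding sections, is to define $\mu$ directly on the prebasis of clopen sets $\mathcal{A}_x \setminus \mathcal{A}_y$ and their complements and invoke a uniqueness/extension argument; but since $X$ is not assumed countable here, I would prefer to lean on the fact that $p : 2^X \to \mathcal{W}(X)$ is a two-to-one (off the diagonal set $\{A : A = A^c\}$, which is empty or a single point and $\nu$-negligible since a single bipartition into $\emptyset$ and $X$ is not of the separating type) continuous surjection between compact spaces, so a Borel section exists by a measurable selection theorem, or one simply works with the free $\Z/2\Z$-action generated by $s$ on $2^X$ (whose quotient is $\mathcal{W}(X)$) and averages. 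Then $p_*\mu = \frac{1}{2}(p_* q_* \nu + p_* s_* q_* \nu) = \frac{1}{2}(\nu + \nu) = \nu$ since $p \circ s = p$ and $p \circ q = \mathrm{id}$.

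Finally I would verify the two composites are identities: starting from symmetric $\mu$, forming $p_*\mu$ and then re-symmetrizing a section gives back $\mu$ because $\mu$ was already $s$-invariant and splits its mass evenly over each fibre of $p$ (this is exactly the meaning of $s$-invariance together with the fibre having two points); starting from $\nu$ the round trip gives $p_*\mu = \nu$ as just computed. Together with the pseudo-distance identity established above, this proves the bijection is canonical and isometric in the stated sense. The main obstacle I anticipate is the measure-theoretic bookkeeping in the non-countable setting: ensuring the pushforward and the symmetrization are well-defined Borel measures and that the fibrewise "even splitting" characterization of $s$-invariant measures is correctly stated; if one restricts to countable $X$ this is routine, but in general it requires either a measurable section or phrasing everything in terms of the $\Z/2\Z$-quotient, and I would spend the bulk of the write-up making that precise while keeping the topological facts (continuity of $p$, compactness of $2^X$, triviality of the exceptional fibre) explicit.
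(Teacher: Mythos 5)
Your proposal is correct and takes essentially the same route as the paper: push the measure forward by $p$, observe that $\mathcal{A}_x\bigtriangleup\mathcal{A}_y=p^{-1}\{[A]\,:\,A\vdash\{x,y\}\}$ so the pseudo-distance is preserved, and invert by symmetrizing a section, $\nu\mapsto\frac{1}{2}(q_*\nu+s_*q_*\nu)$, checking the two composites fibrewise using $s$-invariance. The only difference is that the paper removes your main anticipated obstacle (existence of a Borel section) by the explicit choice: fix $x\in X$ and take $j=(p|_{\mathcal{A}_x})^{-1}$, which is a homeomorphism, and then carries out the short computation, splitting a Borel set $B$ as $B_x\sqcup B'_x$ according to membership of $x$, that you only sketch as ``mass split evenly over each fibre.''
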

\begin{proof}
This map obviously preserves the associated pseudo-distance.

Fix $x\in X$. The restriction of $p$ to $\{A\in 2^X|A\ni x\}$ is a homeomorphism whose inverse we denote by $j$. Note that $j_*\nu$ is not symmetric. We have $p\circ j=\text{Id}$ and $p\circ s\circ j=\text{Id}$, so $p_*j_*\nu=\nu$ and $p_*s_*j_*\nu=\nu$. Now $$T\nu=\frac{1}{2}(j_*\nu+s_*j_*\nu)$$ is symmetric, and $p_*T\nu=\nu$. This proves that $T$ is a right inverse of $p_*$. We claim that, if we restrict to symmetric measured walls structures, this is an inverse.

Consider $B$ a Borel subset of $2^X$ and write $B=B_x\sqcup B'_x$, where $B_x=\{A\in B|A\ni x\}$. Then $$j_* p_*\mu(B)=j_* p_*\mu(B_x)+j_* p_*\mu(B'_x)=\mu(B_x)+\mu(s(B_x)) + 0$$
and similarly
$$s_*j_* p_*\mu(B)= 0 + \mu(s(B'_x))+\mu(B'_x),$$
so
$$Tp_*\mu(B)=\frac{1}{2}(\mu(B_x)+\mu(s(B_x))+\mu(s(B'_x))+\mu(B'_x)).$$
If $\mu$ is symmetric, this gives
\[
Tp_*\mu(B)=\mu(B_x)+\mu(B'_x)=\mu(B).\qedhere
\]
\end{proof}

Note that any alternate measured walls structure defines a space with measured walls, with Borel subsets as $\sigma$-algebra.

Conversely, if $(X,\Wc,\Bc,\mu)$ is a space with measured walls (as in \cite[Definition~2]{CMV}) on a countable set $X$, consider the embedding $i$ of $\Wc$ into $\Wc(X)$. Then the $\sigma$-algebra $i_*\Bc=\{A\in\Wc(X)|A\cap\Wc\in\Bc\}$ contains all basic clopen sets $\mathcal{A}_{\{x,y\}}=\{A|A\vdash\{x,y\}\}$, hence contains all Borel sets, and $i_*\mu$ provides an alternate measured walls structure. This corresponds to a measured walls structure by Lemma \ref{lem:rev2}.

As all the constructions given here are canonical (the introduction of $x$ in the proof of Lemma \ref{lem:rev2} is not canonical but is only used to prove that some canonically defined map is a bijection), they are compatible with group actions.

\section{Hecke pairs}\label{AppHecke}

\begin{defn}\label{defHecke} Let $H$ be a subgroup of the group $G$. The pair $(G,H)$ is a Hecke pair if all $H$-orbits on $G/H$ are finite.
\end{defn}

Clearly, $(G,H)$ is a Hecke pair if either $H$ is finite, or $H$ has finite index, or $H$ is normal in $G$. A non-trivial example is the pair $(SL_2(\Q),SL_2(\Z))$ (see e.g. \cite{Krieg}, to which we also refer for more background). The following result allows us to construct many more examples.

\begin{prop}\label{CarHecke} For a subgroup $H$ of a group $G$, consider the following properties:
\begin{enumerate}
\item[i)] The homogeneous space $G/H$ carries a $G$-invariant structure of a connected, locally finite graph.
\item[ii)] There exists a $G$-invariant, proper, non-negative kernel on $G/H$.
\item[iii)] $(G,H)$ is a Hecke pair.
\end{enumerate}
Then $(i)\Rightarrow (ii)\Leftrightarrow (iii)$. If moreover $G$ is finitely generated, then all conditions are equivalent.
\end{prop}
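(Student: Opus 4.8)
The plan is to establish the cycle $(i)\Rightarrow(ii)\Rightarrow(iii)\Rightarrow(ii)$, and then add the finitely-generated case as a converse to $(i)\Leftrightarrow(ii)$. For $(i)\Rightarrow(ii)$, I would simply take the graph metric on $G/H$ coming from the given $G$-invariant connected locally finite graph structure: it is $G$-invariant by construction, non-negative, and proper because a connected locally finite graph has finite balls. For $(ii)\Rightarrow(iii)$, suppose $\kappa$ is a $G$-invariant proper kernel on $G/H$; fix the base point $o=H\in G/H$. Since $\kappa$ is proper, the ball $\{x\in G/H:\kappa(o,x)\le R\}$ is finite for each $R$. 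Now an $H$-orbit of a point $x\in G/H$ satisfies $\kappa(o,hx)=\kappa(h^{-1}o,x)=\kappa(o,x)$ for all $h\in H$ (using $h^{-1}o=o$ and $G$-invariance), so the $H$-orbit of $x$ is contained in the finite ball of radius $\kappa(o,x)$ around $o$; hence every $H$-orbit on $G/H$ is finite, which is exactly the Hecke condition.

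The slightly less routine direction is $(iii)\Rightarrow(ii)$. Given that $(G,H)$ is a Hecke pair, I would build a $G$-invariant proper kernel by hand. Enumerate the (countably many) $H$-orbits on $G/H$; let $\ell(x)$ be the cardinality of the $H$-orbit of $x$, a finite number, and set $\kappa(o,x)$ to be, say, $\#\{H\text{-orbits }O : O\text{ appears "before" the orbit of }x\text{ in a fixed enumeration with the orbit of }o\text{ first}\}$ — more cleanly, choose an injection $\theta$ from the set of $H$-orbits to $\N$ with $\theta(Ho)=0$, and define $\kappa(o,x)=\theta(Hx)$. Then extend $G$-equivariantly: $\kappa(go,gx):=\kappa(o,x)$; this is well-defined and $G$-invariant provided we symmetrize, so in fact I would take the final kernel to be $\kappa'(x,y)=\kappa(x,y)+\kappa(y,x)$ after fixing the issue that $\kappa(o,\cdot)$ must actually determine a genuine kernel — the standard trick is to note that a $G$-invariant kernel on $G/H$ is the same as an $H$-bi-invariant function on $G$, equivalently an $H$-invariant function on the countable set $H\backslash G/H$ of double cosets, and properness means each sublevel set $\{HgH:f(HgH)\le R\}$ has finite preimage in $G/H$. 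Since each double coset $HgH$ projects to a \emph{finite} $H$-orbit $HgH/H$ in $G/H$ (this is the Hecke condition), any proper injective $f:H\backslash G/H\to\N$ does the job; pulling back gives the desired $\kappa$.

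For the final clause: assuming $G$ is finitely generated, I want $(ii)\Rightarrow(i)$, i.e. to produce a $G$-invariant connected locally finite graph structure on $G/H$. Fix a finite symmetric generating set $S$ of $G$ and put an edge between $gH$ and $gsH$ for $s\in S$; this graph is $G$-invariant, connected (since $S$ generates $G$), and locally finite (degree $\le\#S$). So in the finitely generated case $(i)$ holds unconditionally — in particular $(i)\Leftrightarrow(ii)\Leftrightarrow(iii)$ — which is a clean way to finish. The main obstacle I anticipate is purely bookkeeping in $(iii)\Rightarrow(ii)$: making sure the kernel built from the double-coset enumeration is genuinely a kernel (symmetric, vanishing on the diagonal) and genuinely proper, rather than just a proper function out of the base point; the reformulation via $H$-bi-invariant functions on $G$ is what keeps this honest.
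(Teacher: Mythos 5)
The implications $(i)\Rightarrow(ii)$ and $(ii)\Rightarrow(iii)$ in your write-up coincide with the paper's argument, and your $(iii)\Rightarrow(ii)$ is a correct variant of it: the paper takes a symmetric proper $f:G\to\N$ and sets $k(g)=\min\{f(w):w\in HgH\}$, while you push an injective enumeration of the double cosets $H\backslash G/H$ into $\N$ and symmetrize; both versions pull back a function on double cosets, both give a proper kernel because each sublevel set is a finite union of the finite $H$-orbits, and both implicitly use countability of $G/H$ (which is forced by the existence of any proper kernel in the paper's sense, so this is not a real loss). Symmetry and the value on the diagonal are indeed only bookkeeping, as you say.

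The genuine error is in the final clause. You claim that for finitely generated $G$ the graph with edges between $gH$ and $gsH$, $s\in S$, has degree at most $\#S$, hence that $(i)$ holds \emph{unconditionally}; this is false, and it would contradict the chain $(i)\Rightarrow(ii)\Rightarrow(iii)$ you just established, since not every subgroup of a finitely generated group gives a Hecke pair. The point is that a $G$-invariant edge set containing $\{H,sH\}$ must contain its whole $G$-orbit, in particular all pairs $\{hH,hsH\}=\{H,hsH\}$ with $h\in H$; so the neighbours of the base vertex are $\{hsH : h\in H,\ s\in S\}$, and local finiteness of this set is exactly finiteness of the $H$-orbits of the points $sH$, i.e.\ (an instance of) the Hecke condition. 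Concretely, for $G=\F_2=\langle a,b\rangle$, $H=\langle a\rangle$, $S=\{a^{\pm1},b^{\pm1}\}$, the vertex $H$ has the infinitely many distinct neighbours $a^nbH$, $n\in\Z$. What is true, and what the paper proves, is $(iii)\Rightarrow(i)$ for finitely generated $G$: declare $gH$ and $g'H$ adjacent when $gHs\cap g'H\neq\emptyset$ (equivalently $g'H\subset gHsH$) for some $s\in S$; connectedness follows because the projection of the Cayley graph of $(G,S)$ is a graph morphism onto this graph, and local finiteness follows from the Hecke condition, each $gHsH$ being a finite union of left cosets of $H$. So your construction is the right one, but the last step must invoke $(iii)$ rather than being asserted unconditionally.
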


\begin{proof}
$(i)\Rightarrow (ii)$ If $G/H$ carries a $G$-invariant structure of a connected, locally finite graph $X$, then the distance function on $X$ is a $G$-invariant proper kernel on $G/H$ (properness following from the finiteness of the balls in $X$).

$(ii)\Rightarrow(iii)$ Let $K$ be a proper $G$-invariant kernel on $G/H$. Fix $gH\in G/H$. Set $R=K(eH,gH)$ and $F=\{xH\in G/H:K(eH,xH)= R\}$; the latter is a finite set, by properness of $K$. For $h\in H$, we have by $G$-invariance of $K$:
$$K(eH,hgH)=K(h^{-1}H,gH)=K(eH,gH)=R,$$
so that $hgH\in F$. This shows that $H$-orbits in $G/H$ are finite.

$(iii)\Rightarrow(ii)$ Assume that $(G,H)$ is a Hecke pair. Let $f:G\rightarrow\N$ be a proper function; replacing $f$ by $f+\check{f}$, we may assume that $f$ is symmetric. Define, for $g\in G$:
$$k(g)=\min\{f(w):w\in HgH \}.$$
Then $k:G\rightarrow\N$ is a symmetric, bi-$H$-invariant function, so $K(gH,g'H)=k(g^{-1}g')$ is a well-defined $G$-invariant, symmetric, kernel on $G/H$. To check properness, fix $R\geq 0$. If $k(g)\leq R$, then the double class $HgH$ meets the finite set $F_R=f^{-1}[0,R]$, so that $gH\subset HF_RH$. As $(G,H)$ is a Hecke pair, $HF_RH$ consists of finitely many left cosets of $H$, so that there are finitely many $gH$'s with $K(eH,gH)\leq R$.

Assume now that $G$ is generated by a finite symmetric set $S$, and that $(G,H)$ is a Hecke pair. We define a graph $X$ with vertex set $G/H$ by declaring $gH,\,g'H$ to be adjacent if $gHs\cap g'H\neq\emptyset$ for some $s\in S$. This adjacency relation is symmetric, as $S=S^{-1}$, and obviously $G$-invariant. Observe now that $gHs\cap g'H\neq\emptyset$ if and only if $g'H\subset gHsH$. This holds in particular if $g'=gs$. So the canonical projection $G\rightarrow G/H; g\mapsto gH$ induces a homomorphism of graphs from the Cayley graph ${\mathcal G}(G,S)$ to $X$. In particular, $X$ is connected. Finally, fix $gH\in G/H$; since $gHsH$ consists of finitely many left cosets of $H$, and $S$ is finite, we see that there are finitely many cosets $g'H$'s such that $g'H\subset \bigcup_{s\in S}gHsH$, so that $gH$ has finitely many neighbors, and $X$ is locally finite. This proves $(iii)\Rightarrow(i)$.

\end{proof}

\baselineskip=16pt


\begin{thebibliography}{KM98b}


\bibitem[AD]{AD} G. Arzhantseva, T. Delzant. Examples of random groups, Preprint (2008), available on \url{http://www.unige.ch/math/folks/arjantse/publicationsGA.html}, to appear in J. of Topol.

\bibitem[AGS]{AGS} G. Arzhantseva, V. Guba, M. Sapir. Metrics on diagram groups and
uniform embeddings in a Hilbert space. Comment. Math. Helv. 81 (2006), no. 4, 911-929.

\bibitem[ANP]{ANP} T. Austin, A. Naor, Y. Peres. The wreath product of $\Z$ with $\Z$ has Hilbert compression exponent 2/3. Proc. Amer. Math. Soc. 137  (2009), 85--90.
\bibitem[BHV]{BHV} B. Bekka, P. de la Harpe, A. Valette. Kazhdan's Property (T). New math. monographs 11, Cambridge Univ. Press 2008.

\bibitem[BDK]{BDCK} J. Bretagnolle, D. Dacunha Castelle, J.-L. Krivine.
Lois stables et espaces $L^p$.
Annales de l'IHP, section B, tome 2 (1966), 231-259.

\bibitem[BJS]{BJS} M.~Bo\.zejko, T.~Januszkiewicz, R.~J.~Spatzier.
Infinite Coxeter groups do not have Kazhdan's property.
J. Operator Theory 19 (1988), no. 1, 63--67.

\bibitem[Bk]{Bk} N. Bourbaki. \'El\'ements de Math\'ematique. Groupes et alg\`ebres de Lie. Chap 4-6. Hermann, 1968.

\bibitem[Bo]{B} J. Bourgain. The metrical interpretation of superreflexivity in Banach spaces. Israel J. Math. 56 (1986), no. 2, 222--230.

\bibitem[BrO]{BrO} N. Brown, N. Ozawa. C*-algebras and finite-dimensional approximations, Grad. Studies in Math. 88, Amer. Math. Soc. 2008.

\bibitem[Ch]{Guido}
Guido's book of conjectures (collected by I. Chatterji), Monographie no. 40 de l'Enseign. Math., Gen\`eve 2008.

\bibitem[CDH]{CDH} I. Chatterji, C. Drutu, and F. Haglund.
Kazhdan and Haagerup properties from the median viewpoint. Adv. Math. 225(2) (2010) 882--921.

\bibitem[CCJJV]{CCJJV}
P.-A. Cherix, M. Cowling, P. Jolissaint, P. Julg, and A.
  Valette.
\newblock {Groups with the {H}aagerup property}, {Progress in Mathematics}, vol. 197.
\newblock Birkh\"auser Verlag, Basel, 2001.

\bibitem[CI]{CI} I. Chifan, A. Ioana. On Relative property (T) and Haagerup's
property. Trans. Amer. Math. Soc. 363 (2011), 6407--6420.



\bibitem[CMV]{CMV} P.-A. Cherix, F. Martin, and A. Valette. Spaces with measured walls, the {H}aagerup property and
property ({T}). Ergodic Theory Dynam. Systems 24 (2004), no. 6, 1895--1908.

\bibitem[Co1]{Cth} Y. {Cornulier}. {On Haagerup and Kazhdan properties}. Th\`ese EPFL, no 3438 (2006). Dir.: Peter Buser, A. Valette.

\bibitem[Co2]{C} Y. Cornulier. Relative Kazhdan Property. Annales Sci. \'Ecole Normale Sup. 39 (2006), no. 2, 301-333.

\bibitem[CSV]{CSV} Y. Cornulier, Y. Stalder, A. Valette. Proper actions of lamplighter groups associated with free groups.
C. R. Acad. Sci. Paris, Ser.~I 346 (2008),  no. 3-4, 173--176.

\bibitem[CT]{CT} Y. Cornulier, R. Tessera. Quasi-isometrically embedded free sub-semigroups. Geom. Topol. 12 (2008), 461--473.

\bibitem[CTV]{CTV} Y. Cornulier, R. Tessera, A. Valette. Isometric group actions on Banach spaces and representations vanishing at infinity. Trans. Groups, 13 (2008), 125-147.

\bibitem[DG]{DG} M. Dadarlat, E. Guentner.
Constructions preserving Hilbert space uniform embeddability of discrete
groups.
Trans. Amer. Math. Soc. 355 (2003), no. 8, 3253--3275.

\bibitem[DL]{DL} M. Deza, M. Laurent. Geometry of Cuts and Metrics, Springer, Berlin, 1997.

\bibitem[Gr]{Gr} M. Gromov. Random walk in random groups. Geom. Funct. Anal. 13 (2003), no. 1, 73--146.

\bibitem[HK]{HK} N.~Higson, G.~Kasparov. $E$-theory and $KK$-theory for groups which act properly and isometrically on Hilbert space.  Invent. Math.  144  (2001),  no. 1, 23--74.

\bibitem[Kr]{Krieg} A. Krieg. Hecke algebras. Mem. Amer. Math. Soc. 87  (1990), no. 435.

\bibitem[vdL]{vdL} H. van der Lek. The homotopy type of complex hyperplane complements, PhD thesis, Univ. of Nijmegen, 1993.

\bibitem[L]{Li} Sean Li. Compression bounds for wreath products. 
Proc. Amer. Math. Soc.  138  (2010), 2701--2714. 


\bibitem[N]{Neu} M. Neuhauser. Relative property (T) and related properties of wreath products. Math. Z. 251 (2005), 167--177.

\bibitem[P]{Par} W. Parry, Growth series of some wreath products, Trans. Amer. Math. Soc. 331 (1992), no 2, 751--759.

\bibitem[RS]{RS} G. Robertson, T. Steger Negative definite kernels and a dynamical characterization of property T. Ergodic Theory Dynam. Systems 18 (1998), no. 1, 247--253.

\bibitem[Ro]{R} J. Roe. Lectures on Coarse Geometry, University Lecture Series, 31. American Mathematical Society, Providence, RI, 2003.

\bibitem[Ru]{Rudin} W. Rudin, Real and complex analysis, McGraw-Hill, 1966.

\bibitem[T]{Tits} J. Tits, Buildings of spherical type and finite BN-pairs, Lecture Notes in Math. 386, Springer.

\bibitem[Z1]{Zel1} E. Zelmanov. Solution of the restricted Burnside problem for groups of
odd exponent. (Russian) Izv. Akad. Nauk SSSR Ser. Mat. 54, no. 1: 42-59 (1990), translation in Math. USSR-Izv. 36 (1991), no. 1: 41-60.

\bibitem[Z2]{Zel2} E. Zelmanov. Solution of the restricted Burnside problem for 2-groups.
(Russian) Mat. Sb. 182, no. 4: 568-592 (1991). Translation in Math. USSR-Sb. 72 (1992), no. 2, 543-565.
\end{thebibliography}
\end{document}